\let\mathcal\mathbcal
\newtheorem{thm}{Theorem}
\newtheorem{lem}{Lemma}
\newtheorem{cor}{Corollary}
\theoremstyle{definition}
\newtheorem{rem}{Remark}
\newcommand{\ensembles}[1]{\mathbf{#1}}
	\newcommand{\N}{\ensembles{N}}
	\newcommand{\Z}{\ensembles{Z}}
	\newcommand{\R}{\ensembles{R}}
	\newcommand{\U}{\ensembles{U}}
	\renewcommand{\P}{\ensembles{P}}
	\newcommand{\E}{\ensembles{E}}
	\newcommand{\Var}{\mathrm{Var}}
\DeclareMathAlphabet{\mathbbo}{U}{bbold}{m}{n}
	\newcommand{\1}{\mathbbo{1}}
\newcommand{\ind}[1]{\1_{\{#1\}}}
\renewcommand{\Pr}[1]{\P\left(#1\right)}
\newcommand{\Prc}[2]{\P\left(#1 \;\middle|\; #2\right)}
\newcommand{\Es}[1]{\E\left[#1\right]}
\newcommand{\Esc}[2]{\E\left[#1 \;\middle|\; #2\right]}
\newcommand{\CRT}{\mathscr{T}}
\renewcommand{\d}{\mathrm{d}}
\newcommand{\e}{\mathrm{e}}
\newcommand{\Hsp}{H^{\mathrm{sp}}}
\newcommand{\tildeHsp}{\widetilde{H}^{\mathrm{sp}}}
\newcommand{\Csp}{C^{\mathrm{sp}}}
\newcommand{\Hspc}{\mathsf{H}^{\mathrm{sp}}}
\newcommand{\Xexc}{\mathscr{X}}
\newcommand{\Hexc}{\mathscr{H}}
\newcommand{\Snake}{\mathscr{S}}
\newcommand{\K}{\mathscr{K}}
\newcommand{\cv}[1][n]{\enskip\mathop{\longrightarrow}^{}_{#1 \to \infty}\enskip}
\newcommand{\cvloi}[1][n]{\enskip\mathop{\longrightarrow}^{(d)}_{#1 \to \infty}\enskip}
\newcommand{\cvproba}[1][n]{\enskip\mathop{\longrightarrow}^{\P}_{#1 \to \infty}\enskip}
\title{Scaling limits of discrete snakes with stable branching}
\author{Cyril \textsc{Marzouk}}
\address{Laboratoire de Math\'ematiques d'Orsay, Univ. Paris-Sud, CNRS, Universit\'e Paris-Saclay, 91405 Orsay, France.}
\email{cyril.marzouk@u-psud.fr}
\begin{document}

\begin{abstract}
We consider so-called discrete snakes obtained from size-conditioned critical Bienaymé--Galton--Watson trees by assigning to each node a random spatial position in such a way that the increments along each edge are i.i.d. When the offspring distribution belongs to the domain of attraction of a stable law with index $\alpha \in (1,2]$, we give a necessary and sufficient condition on the tail distribution of the spatial increments for this spatial tree to converge, in a functional sense, towards the Brownian snake driven by the $\alpha$-stable Lévy tree. We also study the case of heavier tails, and apply our result to study the number of inversions of a uniformly random permutation indexed by the tree. 
\end{abstract}

\maketitle

\begin{figure}[!ht] \centering
\includegraphics[width=.85\linewidth]{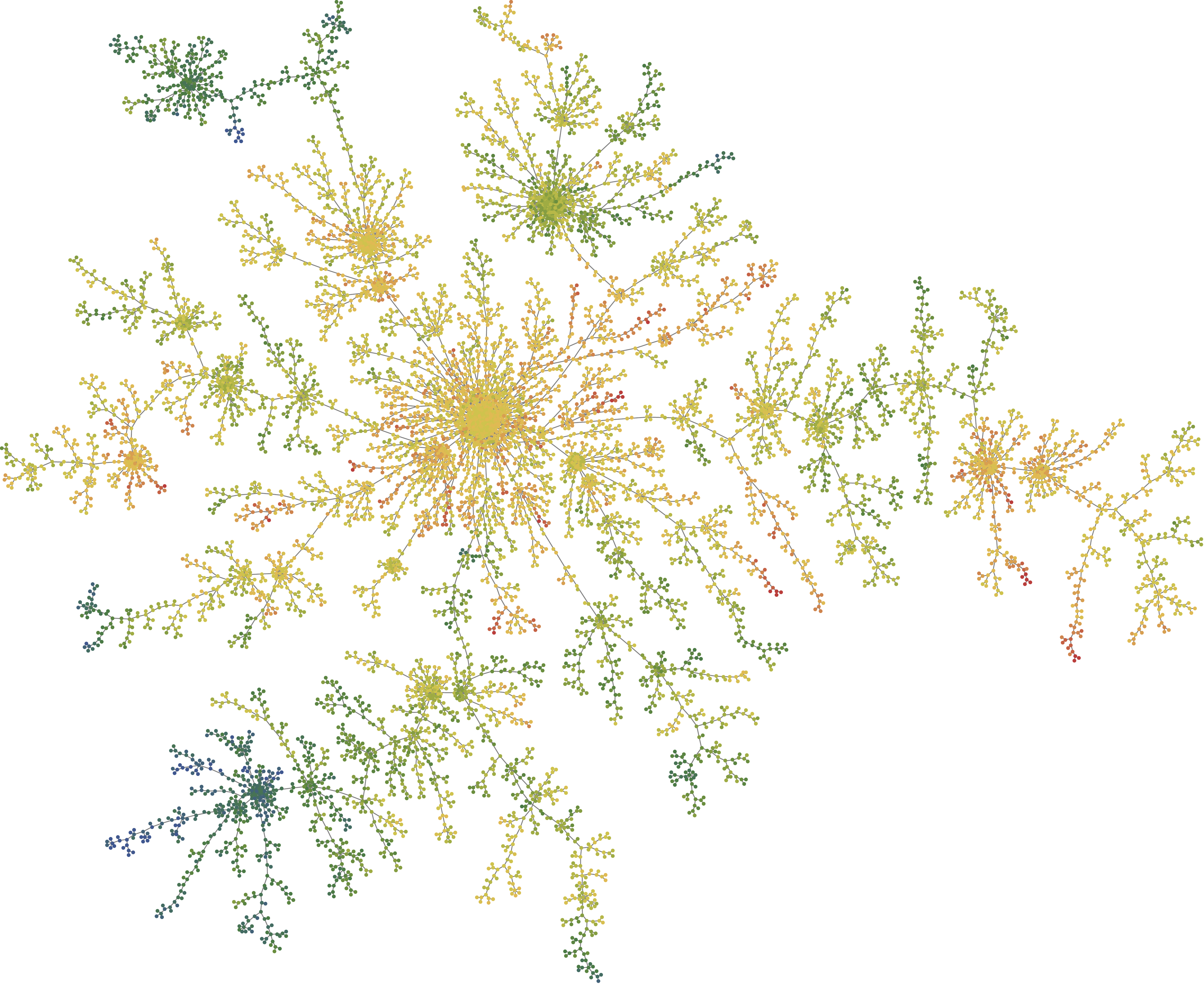}
\caption{A spatial stable Lévy tree with index $\alpha = \numprint{1.2}$; colours indicate the spatial position of each vertex, from blue for the lowest (negative) ones to green then yellow and finally red for the highest ones. It corresponds to Theorem~\ref{thm:convergence_serpent_iid} with $Y$ uniformly distributed on $\{-1, 0, 1\}$.}
\label{fig:arbre_couleurs}
\end{figure}

\section{Introduction and main results}

We investigate scaling limits of large size-conditioned random Bienaymé--Galton--Watson trees equipped with spatial positions, when the  offspring distribution belongs to the domain of attraction of a stable law. Our results extend previous ones established by Janson \& Marckert~\cite{Janson-Marckert:Convergence_of_discrete_snakes} when the offspring distribution admits finite exponential moments. Relaxing this strong assumption to even a finite variance hypothesis is often challenging, and our key result is a tight control on the geometry of the trees, which is of independent interest. Let us present precisely our main result, assuming some familiarities with Bienaymé--Galton--Watson trees and their coding by paths. The basic definitions are recalled in Section~\ref{sec:def_arbres_discrets_continus} below.

\subsection{Large Bienaymé--Galton--Watson trees}

Throughout this work, we fix a probability measure $\mu$ on $\Z_+ = \{0,1, \dots\}$ such that $\mu(0) > 0$ and $\sum_{k \ge 0} k \mu(k) = 1$. To simplify the exposition, we also assume that $\mu$ is aperiodic, in the sense that its support generates the whole group $\Z$, not just a strict subgroup; the general case only requires mild modifications. For every $n \ge 1$, we denote by $T_n$ a random plane tree distributed as a \emph{Bienaymé--Galton--Watson} tree with offspring distribution $\mu$ and conditioned to have $n+1$ vertices,\footnote{Our results also hold when the tree is conditioned to have $n+1$ vertices with out-degree in a fixed set $A \subset \Z_+$, appealing to \cite{Kortchemski:Invariance_principles_for_Galton_Watson_trees_conditioned_on_the_number_of_leaves}.} which is well defined for every $n$ large enough from the aperiodicity of $\mu$. It is well known that for every $a,b > 0$, $T_n$ has the same law as a random \emph{simply generated} tree with $n+1$ vertices, associated with the weight sequence $(a b^k \mu(k))_{k \ge 0}$, so there is almost no loss of generality to assume that $\mu$ has mean $1$.

Finally, we assume that there exists $\alpha \in (1,2]$ such that $\mu$ \emph{belongs to the domain of attraction of an $\alpha$-stable law}, which means that there exists an increasing sequence $(B_n)_{n \ge 1}$ such that if $(\xi_n)_{n \ge 1}$ is a sequence of i.i.d. random variables sampled from $\mu$, then $B_n^{-1} (\xi_1 + \dots + \xi_n - n)$ converges in distribution to a random variable $X^{(\alpha)}$ whose law is given by the Laplace exponent $\E[\exp(-\lambda X^{(\alpha)})] = \exp(\lambda^\alpha)$ for every $\lambda \ge 0$. Recall that $n^{-1/\alpha} B_n$ is \emph{slowly varying} at infinity and that if $\mu$ has variance $\sigma^2 \in (0,\infty)$, then this falls in the case $\alpha=2$ and we may take $B_n = \sqrt{n \sigma^2/2}$.

It is well-known that a planar tree can be encoded by discrete paths; in Section~\ref{sec:def_arbres_discrets_continus}, we recall the definition of the \emph{{\L}ukasiewicz path} $W_n$, the \emph{height process} $H_n$ and the \emph{contour process} $C_n$ associated with the tree $T_n$. Duquesne~\cite{Duquesne:A_limit_theorem_for_the_contour_process_of_conditioned_Galton_Watson_trees} (see also Kortchemski~\cite{Kortchemski:A_simple_proof_of_Duquesne_s_theorem_on_contour_processes_of_conditioned_Galton_Watson_trees, Kortchemski:Invariance_principles_for_Galton_Watson_trees_conditioned_on_the_number_of_leaves}) has proved that
\begin{equation}\label{eq:cv_GW_Duquesne}
\left(\frac{1}{B_n}W_n(n t), \frac{B_n}{n}H_n(n t), \frac{B_n}{n}C_n(2n t)\right)_{t \in [0,1]}
\cvloi
\left(\Xexc_t, \Hexc_t, \Hexc_t\right)_{t \in [0,1]}
\end{equation}
in the Skorokhod space $\mathscr{D}([0,1], \R^3)$, where $\Xexc$ is the normalised excursion of the $\alpha$-stable \emph{Lévy process} with no negative jump, whose value at time $1$ has the law of $X^{(\alpha)}$, and $\Hexc$ is the associated continuous \emph{height function}; see the references above for definitions and Figure~\ref{fig:Levy_hauteur} below for an illustration. In the case $\alpha=2$, the processes $\Xexc$ and $\Hexc$ are equal, both to $\sqrt{2}$ times the standard Brownian excursion. In any case, $\Hexc$ is a non-negative, continuous function, which vanishes only at $0$ and $1$. As any such function, it encodes a `continuous tree' called the \emph{$\alpha$-stable Lévy tree} $\CRT_\alpha$ of Duquesne, Le Gall \& Le Jan~\cite{Duquesne:A_limit_theorem_for_the_contour_process_of_conditioned_Galton_Watson_trees,Le_Gall-Le_Jan:Branching_processes_in_Levy_processes_the_exploration_process}, which generalises the celebrated Brownian tree of Aldous~\cite{Aldous:The_continuum_random_tree_3} in the case $\alpha=2$. The convergence~\eqref{eq:cv_GW_Duquesne} implies that the tree $T_n$, viewed as a metric space by endowing its vertex-set by the graph distance rescaled by a factor $\frac{B_n}{n}$, converges in distribution in the so-called Gromov--Hausdorff topology towards $\CRT_\alpha$, see e.g. Duquesne \& Le Gall~\cite{Duquesne-Le_Gall:Probabilistic_and_fractal_aspects_of_Levy_trees}.

\begin{figure}[!ht] \centering
\includegraphics[width=.45\linewidth]{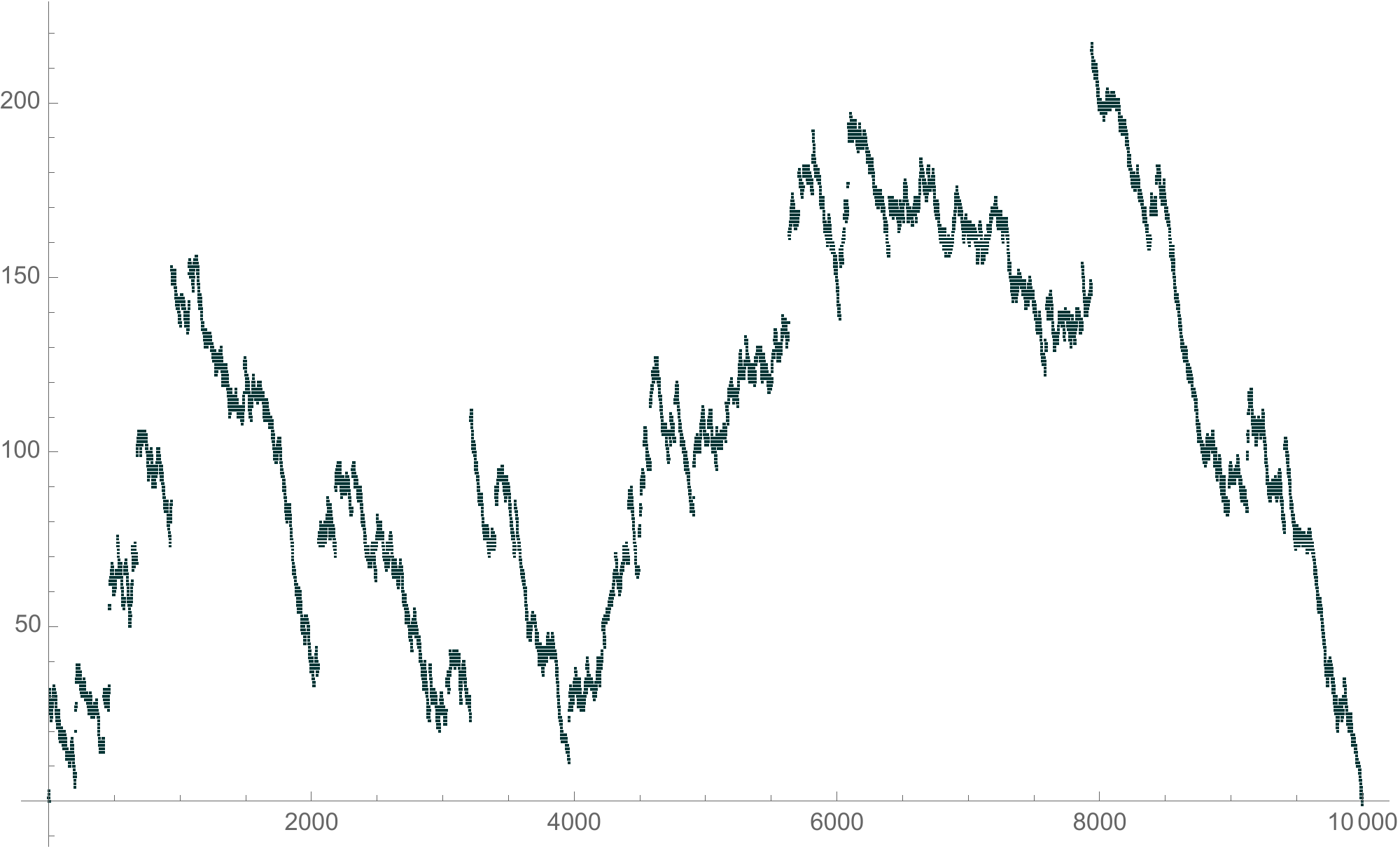}
\qquad
\includegraphics[width=.45\linewidth]{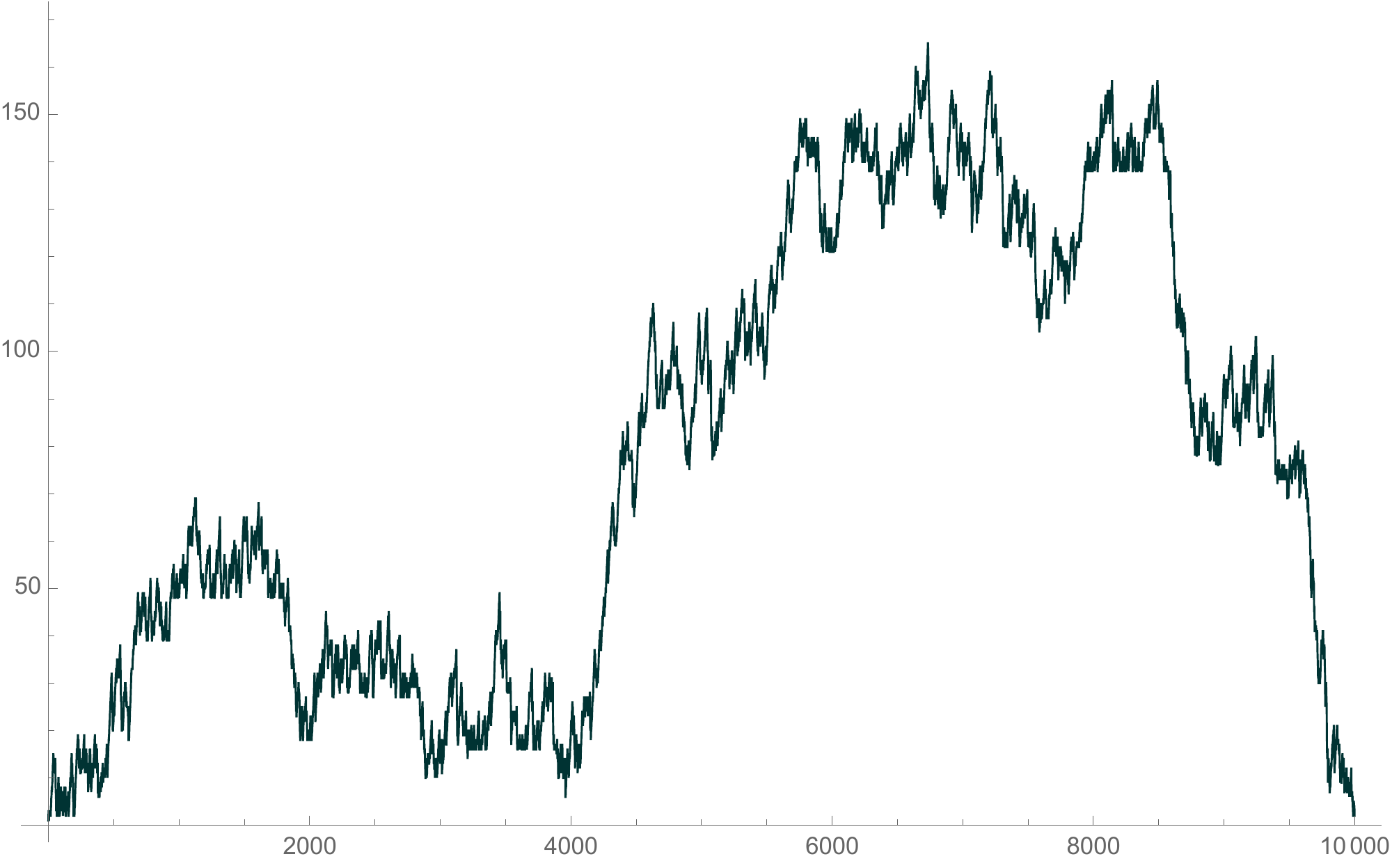}
\caption{The {\L}ukasiewicz path and the height process of $T_{\numprint{10000}}$ with $\alpha = \numprint{1.3}$.}
\label{fig:Levy_hauteur}
\end{figure}

\subsection{Spatial trees and applications}
In this paper, we consider \emph{spatial trees} (or \emph{labelled trees}, or \emph{discrete snakes}) which are plane trees in which each node $u$ of the tree $T$ carries a position $S_u$ in $\R$. We shall always assume that the root $\varnothing$ of the tree has position $S_\varnothing = 0$ by convention so the spatial positions $(S_u)_{u \in T}$ are entirely characterised by the \emph{displacements} $(Y_u)_{u \in T\setminus\{\varnothing\}}$. Several models of such random spatial trees have been studied and the simplest one is the following: let $Y$ be some random variable, then conditional on a random finite tree $T$, the spatial displacements $(Y_u)_{u \in T\setminus\{\varnothing\}}$ are i.i.d. copies of $Y$.

In the same way a tree $T_n$ with $n+1$ vertices is encoded by its height process $H_n$ and its contour process $C_n$, the spatial postions are encoded by the \emph{spatial height process} $\Hsp_n$ and the \emph{spatial contour process} $\Csp_n$. We consider scaling limits of these processes as $n \to \infty$. The most general such results are due to Janson \& Marckert~\cite{Janson-Marckert:Convergence_of_discrete_snakes} who considered the case where the tree $T_n$ is a size-conditioned Bienaymé--Galton--Watson tree whose offspring distribution has \emph{finite exponential moments}. All their results extend to our setting. The main one is a necessary and sufficient condition for the convergence towards the so-called \emph{Brownian snake} driven by the random excursion $\Hexc$, which, similarly to the discrete setting, is interpreted as a Brownian motion indexed by the stable tree $\CRT_\alpha$; see Section~\ref{sec:def_arbres_discrets_continus} below for a formal definition and Figure~\ref{fig:serpents_browniens} for two simulations.

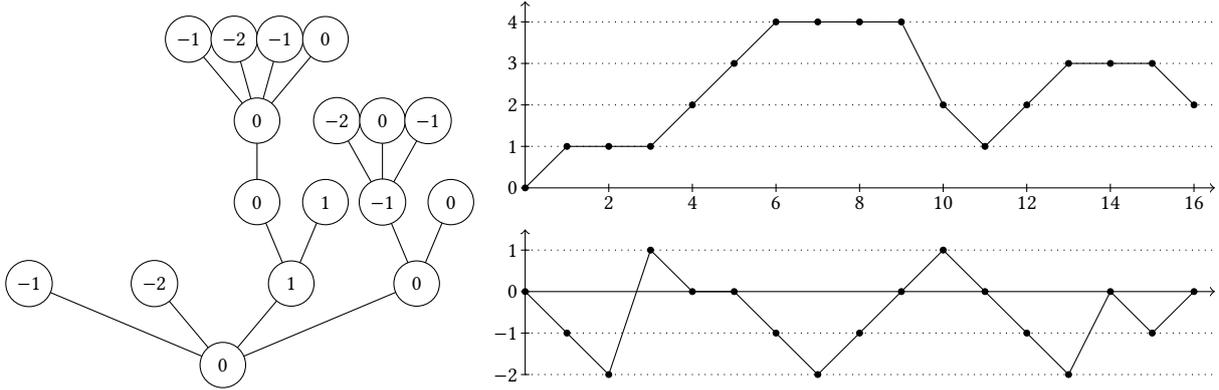
\begin{figure}[!ht]\centering
\def\r{.6}
\def\longueur{1.8}
\begin{tikzpicture}[scale=.6]
\coordinate (1) at (0,0*\longueur);
	\coordinate (2) at (-4.25,1*\longueur);
	\coordinate (3) at (-1.5,1*\longueur);
	\coordinate (4) at (1.5,1*\longueur);
		\coordinate (5) at (.75,2*\longueur);
			\coordinate (6) at (.75,3*\longueur);
				\coordinate (7) at (-.75,4*\longueur);
				\coordinate (8) at (.25,4*\longueur);
				\coordinate (9) at (1.25,4*\longueur);
				\coordinate (10) at (2.25,4*\longueur);
		\coordinate (11) at (2.25,2*\longueur);
	\coordinate (12) at (4.25,1*\longueur);
		\coordinate (13) at (3.5,2*\longueur);
			\coordinate (14) at (2.5,3*\longueur);
			\coordinate (15) at (3.5,3*\longueur);
			\coordinate (16) at (4.5,3*\longueur);
		\coordinate (17) at (5,2*\longueur);

\draw
	(1) -- (2)	(1) -- (3)	(1) -- (4)	(1) -- (12)
	(4) -- (5)	(4) -- (11)
	(5) -- (6)
	(6) -- (7)	(6) -- (8)	(6) -- (9)	(6) -- (10)
	(12) -- (13)	(12) -- (17)
	(13) -- (14)	(13) -- (15)	(13) -- (16)
;

\begin{scriptsize}
\node[circle, minimum size=\r cm, fill=white, draw] at (2) {$-1$};
\node[circle, minimum size=\r cm, fill=white, draw] at (3) {$-2$};
\node[circle, minimum size=\r cm, fill=white, draw] at (7) {$-1$};
\node[circle, minimum size=\r cm, fill=white, draw] at (8) {$-2$};
\node[circle, minimum size=\r cm, fill=white, draw] at (9) {$-1$};
\node[circle, minimum size=\r cm, fill=white, draw] at (10) {$0$};
\node[circle, minimum size=\r cm, fill=white, draw] at (11) {$1$};
\node[circle, minimum size=\r cm, fill=white, draw] at (14) {$-2$};
\node[circle, minimum size=\r cm, fill=white, draw] at (15) {$0$};
\node[circle, minimum size=\r cm, fill=white, draw] at (16) {$-1$};
\node[circle, minimum size=\r cm, fill=white, draw] at (17) {$0$};

\node[circle, minimum size=\r cm, fill=white, draw] at (1) {$0$};
\node[circle, minimum size=\r cm, fill=white, draw] at (4) {$1$};
\node[circle, minimum size=\r cm, fill=white, draw] at (5) {$0$};
\node[circle, minimum size=\r cm, fill=white, draw] at (6) {$0$};
\node[circle, minimum size=\r cm, fill=white, draw] at (12) {$0$};
\node[circle, minimum size=\r cm, fill=white, draw] at (13) {$-1$};
\end{scriptsize}
\end{tikzpicture}
\begin{scriptsize}
\begin{tikzpicture}[scale=.55]
\draw[thin, ->]	(0,0) -- (16.5,0);
\draw[thin, ->]	(0,0) -- (0,4.5);
\foreach \x in {1, 2, ..., 4}
	\draw[dotted]	(0,\x) -- (16.5,\x);
\foreach \x in {0, 1, ..., 4}
	\draw (.1,\x)--(-.1,\x)	(0,\x) node[left] {$\x$};
\foreach \x in {2, 4, ..., 16}
	\draw (\x,.1)--(\x,-.1)	(\x,0) node[below] {$\x$};
\coordinate (0) at (0, 0);
\coordinate (1) at (1, 1);
\coordinate (2) at (2, 1);
\coordinate (3) at (3, 1);
\coordinate (4) at (4, 2);
\coordinate (5) at (5, 3);
\coordinate (6) at (6, 4);
\coordinate (7) at (7, 4);
\coordinate (8) at (8, 4);
\coordinate (9) at (9, 4);
\coordinate (10) at (10, 2);
\coordinate (11) at (11, 1);
\coordinate (12) at (12, 2);
\coordinate (13) at (13, 3);
\coordinate (14) at (14, 3);
\coordinate (15) at (15, 3);
\coordinate (16) at (16, 2);
\newcommand{\lastx}{0}
\foreach \x [remember=\x as \lastx] in {1, 2, 3, ..., 16} \draw (\lastx) -- (\x);

\foreach \x in {0, 1, 2, 3, ..., 16} \draw [fill=black] (\x)	circle (2pt);

\begin{scope}[shift={(0,-2.5)}]
\draw[thin, ->]	(0,0) -- (16.5,0);
\draw[thin, ->]	(0,-2) -- (0,1.5);
\foreach \x in {-2, -1, 1}
	\draw[dotted]	(0,\x) -- (16.5,\x);
\foreach \x in {-2, -1, 0, 1}
	\draw (.1,\x)--(-.1,\x)	(0,\x) node[left] {$\x$};
%
\coordinate (0) at (0, 0);
\coordinate (1) at (1, -1);
\coordinate (2) at (2, -2);
\coordinate (3) at (3, 1);
\coordinate (4) at (4, 0);
\coordinate (5) at (5, 0);
\coordinate (6) at (6, -1);
\coordinate (7) at (7, -2);
\coordinate (8) at (8, -1);
\coordinate (9) at (9, 0);
\coordinate (10) at (10, 1);
\coordinate (11) at (11, 0);
\coordinate (12) at (12, -1);
\coordinate (13) at (13, -2);
\coordinate (14) at (14, 0);
\coordinate (15) at (15, -1);
\coordinate (16) at (16, 0);
\renewcommand{\lastx}{0}
\foreach \x [remember=\x as \lastx] in {1, 2, 3, ..., 16} \draw (\lastx) -- (\x);

\foreach \x in {0, 1, 2, 3, ..., 16} \draw [fill=black] (\x)	circle (2pt);
\end{scope}
\end{tikzpicture}
\end{scriptsize}
\caption{A spatial tree, its height process $H$ on top and its spatial height process $\Hsp$ below.}
\label{fig:codage_arbre_spatial}
\end{figure}

\begin{thm}[Convergence of discrete snakes]
\label{thm:convergence_serpent_iid}
Let $(\Hexc, \Snake)$ be the Brownian snake driven by the excursion $\Hexc$. Suppose $\E[Y]=0$ and $\Sigma^2 \coloneqq \E[Y^2] \in (0, \infty)$, then the following convergence in distribution holds in the sense of finite-dimensional marginals:
\[\left(\frac{B_n}{n} H_n(n t), \frac{B_n}{n} C_n(2n t), \left(\frac{B_n}{n \Sigma^2}\right)^{1/2} \Hsp_n(nt), \left(\frac{B_n}{n \Sigma^2}\right)^{1/2} \Csp_n(2nt)\right)_{t \in [0,1]} \cvloi (\Hexc_t, \Hexc_t, \Snake_t, \Snake_t)_{t \in [0,1]}.\]
It holds in $\mathscr{C}([0,1], \R^4)$ \emph{if and only if} $\P(|Y| \ge (n/B_n)^{1/2}) = o(n^{-1})$.
\end{thm}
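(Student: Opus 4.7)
The first two marginals are handled by \eqref{eq:cv_GW_Duquesne}. For the spatial coordinates, I would condition on $T_n$ and exploit the fact that $\Hsp_n(k) = S_{u_k}$ is a sum of $H_n(k)$ i.i.d.\ copies of $Y$. Working on a Skorokhod coupling on which $(B_n/n) H_n(n \cdot) \to \Hexc$ almost surely, the conditional variance $(B_n/(n\Sigma^2)) \cdot \Sigma^2 H_n(\lfloor nt \rfloor)$ of the rescaled spatial marginal converges to $\Hexc_t$, and a conditional Lindeberg CLT yields Gaussian convergence. For joint marginals at $t_1 < t_2$, the decomposition $S_{u_{t_2}} - S_{u_{t_1}} = (S_{u_{t_2}} - S_a) - (S_{u_{t_1}} - S_a)$ with $a = u_{t_1} \wedge u_{t_2}$, together with the identity $\mathrm{height}(a) = \min_{j \in [\lfloor nt_1 \rfloor, \lfloor nt_2 \rfloor]} H_n(j)$, gives in the limit exactly the covariance structure of the Brownian snake $\Snake$.

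\textbf{Necessity.} Assume $\Pr{|Y| \ge (n/B_n)^{1/2}}$ is not $o(n^{-1})$: then along a subsequence $n \cdot \Pr{|Y| \ge (n/B_n)^{1/2}} \ge c > 0$, and since the $n$ displacements in $T_n$ are conditionally i.i.d., with probability at least $1 - e^{-c}$ some $Y_u$ satisfies $|Y_u| \ge (n/B_n)^{1/2}$. Upon arrival at $u$ from its parent, this produces a jump of magnitude $\ge \Sigma^{-1}$ in the rescaled process $(B_n/(n\Sigma^2))^{1/2} \Hsp_n(n\cdot)$ over a time interval of length $1/n \to 0$; this is a genuine discontinuity in the limit, precluding convergence in $\mathscr{C}([0,1], \R)$.

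\textbf{Sufficiency and main obstacle.} Assume now $\Pr{|Y| \ge (n/B_n)^{1/2}} = o(n^{-1})$. A union bound shows that all displacements in $T_n$ are simultaneously bounded by $(n/B_n)^{1/2}$ with probability tending to one, so we may replace $Y$ by its recentred truncation $\widetilde Y$ at this threshold. Conditionally on $T_n$, Rosenthal's inequality applied to the independent centred increments yields, for every $p > 2$,
\[\Esc{|\widetilde\Hsp_n(k) - \widetilde\Hsp_n(\ell)|^p}{T_n} \le C_p \bigl(\Sigma^2 d(k,\ell)\bigr)^{p/2} + C_p \, d(k,\ell) \, \Es{|\widetilde Y|^p},\]
where $d(k,\ell) = H_n(k) + H_n(\ell) - 2 \min_{j \in [k,\ell]} H_n(j)$ is the graph distance between $u_k$ and $u_\ell$ in $T_n$. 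The tail condition gives $\Es{|\widetilde Y|^p} = o((n/B_n)^{(p-2)/2})$ for $p$ slightly above $2$, which makes the second term negligible after the appropriate rescaling. Tightness in $\mathscr{C}$ via Kolmogorov's criterion then reduces to a uniform moment control of the modulus of continuity of $d(\lfloor ns \rfloor, \lfloor nt \rfloor)$ on small scales. This tree-geometric estimate --- advertised in the abstract as the ``tight control on the geometry of the trees'' --- is the main technical obstacle: it requires refining \eqref{eq:cv_GW_Duquesne} into a uniform $L^p$ bound on the small-scale oscillations of $H_n$, under only the domain-of-attraction assumption on $\mu$ (much weaker than the exponential-moment hypothesis of \cite{Janson-Marckert:Convergence_of_discrete_snakes}). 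Once established, the contour-process analogue follows by essentially the same argument, completing the proof.
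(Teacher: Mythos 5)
Your outline of the finite-dimensional convergence and of the necessity direction matches the paper. Your strategy for sufficiency (truncate, recenter, Rosenthal, Kolmogorov, conditional on a high-probability H\"older control of $H_n$, which is indeed Lemma~\ref{lem:Holder_hauteur_GW_stables}) is also the right skeleton, and you correctly identify the H\"older estimate as the crux. However, there is a genuine quantitative gap in your tightness step: the truncation level $(n/B_n)^{1/2}$ is too coarse, and Kolmogorov's criterion cannot be closed at that scale.

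To see this, write the Rosenthal bound on the event $A_n$ of Lemma~\ref{lem:Holder_hauteur_GW_stables}, where $d(ns,nt) \le 2C\frac{n}{B_n}|t-s|^\gamma$, and rescale by $(B_n/n)^{p/2}$. The Gaussian term contributes $|t-s|^{\gamma p/2}$, so one must take $p > 2/\gamma$, hence $p > \frac{2\alpha}{\alpha-1}$. The remaining term is $(B_n/n)^{p/2-1}\Es{|\widetilde Y|^p}\,|t-s|^\gamma$. With cutoff at $T_n=(n/B_n)^{1/2}$ and the tail hypothesis, the best one can extract is $\Es{|\widetilde Y|^p} = o\bigl(T_n^{\,p-\beta}\bigr)$ with $\beta$ arbitrarily close to $\frac{2\alpha}{\alpha-1}$, which gives a prefactor
\[(B_n/n)^{p/2-1}\,T_n^{\,p-\beta} = (B_n/n)^{\beta/2-1}\approx n^{-1/\alpha},\]
\emph{independently of $p$}. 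Using $|t-s|\ge 1/n$ to absorb it produces $|t-s|^{1/\alpha+\gamma}$, and since $\gamma<\frac{\alpha-1}{\alpha}$, the exponent $1/\alpha+\gamma$ stays strictly below~$1$. Increasing $p$ only worsens the Gaussian term's constant and does nothing for this one, so Kolmogorov's criterion fails. The claim ``$\Es{|\widetilde Y|^p} = o((n/B_n)^{(p-2)/2})$ for $p$ slightly above $2$'' does not help either, because for $p$ near $2$ the Gaussian term $|t-s|^{\gamma p/2}$ already has exponent below~$1$.

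The paper resolves this by cutting at the strictly lower level $b_n=(n^2/B_n)^{\frac{\alpha-1}{4\alpha}+\varepsilon}\ll (n/B_n)^{1/2}$ (Section~\ref{sec:thm_principal_step2}). Then $\Es{|Y'|^q}\le b_n^q$ gives a Rosenthal prefactor whose exponent in $n$ \emph{does} tend to $-\infty$ as $q\to\infty$, so the needed power of $|t-s|$ can be reached. The price is that the jumps of intermediate size, between $b_n$ and $(n/B_n)^{1/2}$, can no longer be dismissed by a single union bound; the paper handles them in Section~\ref{sec:thm_principal_step3} by showing that with probability tending to $1$ no two such jumps lie on a common ancestral line (via the total path length estimate~\eqref{eq:convergence_total_path_length}), so $\sup|\Hsp_n''|=\max_u|Y_u''|$, and then the union bound on $\max_u|Y_u|$ shows $(B_n/n)^{1/2}\sup|\Hsp_n''|\to 0$. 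This two-scale decomposition is the missing ingredient in your argument. Your recentering step (the analogue of Section~\ref{sec:thm_principal_step4}) and your use of the H\"older control of $H_n$ are otherwise consistent with the paper.
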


In the finite-variance case $B_n = \sqrt{n \sigma^2/2}$, the last assumption is equivalent to $\P(|Y| \ge y) = o(y^{-4})$, which is slightly weaker than $\E[Y^4] < \infty$; otherwise, when the tree is less regular, one needs more regularity from the spatial displacements. 

Let us mention that general arguments show that $H_n$ and $C_n$, once rescaled, are close, see e.g. Duquesne \& Le Gall~\cite[Section~2.5]{Duquesne-Le_Gall:Random_trees_Levy_processes_and_spatial_branching_processes}, or Marckert \& Mokkadem~\cite{Marckert-Mokkadem:States_spaces_of_the_snake_and_its_tour_convergence_of_the_discrete_snake}. The same arguments apply for their spatial counterparts $\Hsp_n$ and $\Csp_n$ so we concentrate only on the joint convergence of $H_n$ and $\Hsp_n$.

\begin{figure}[!ht] \centering
\includegraphics[width=.45\linewidth]{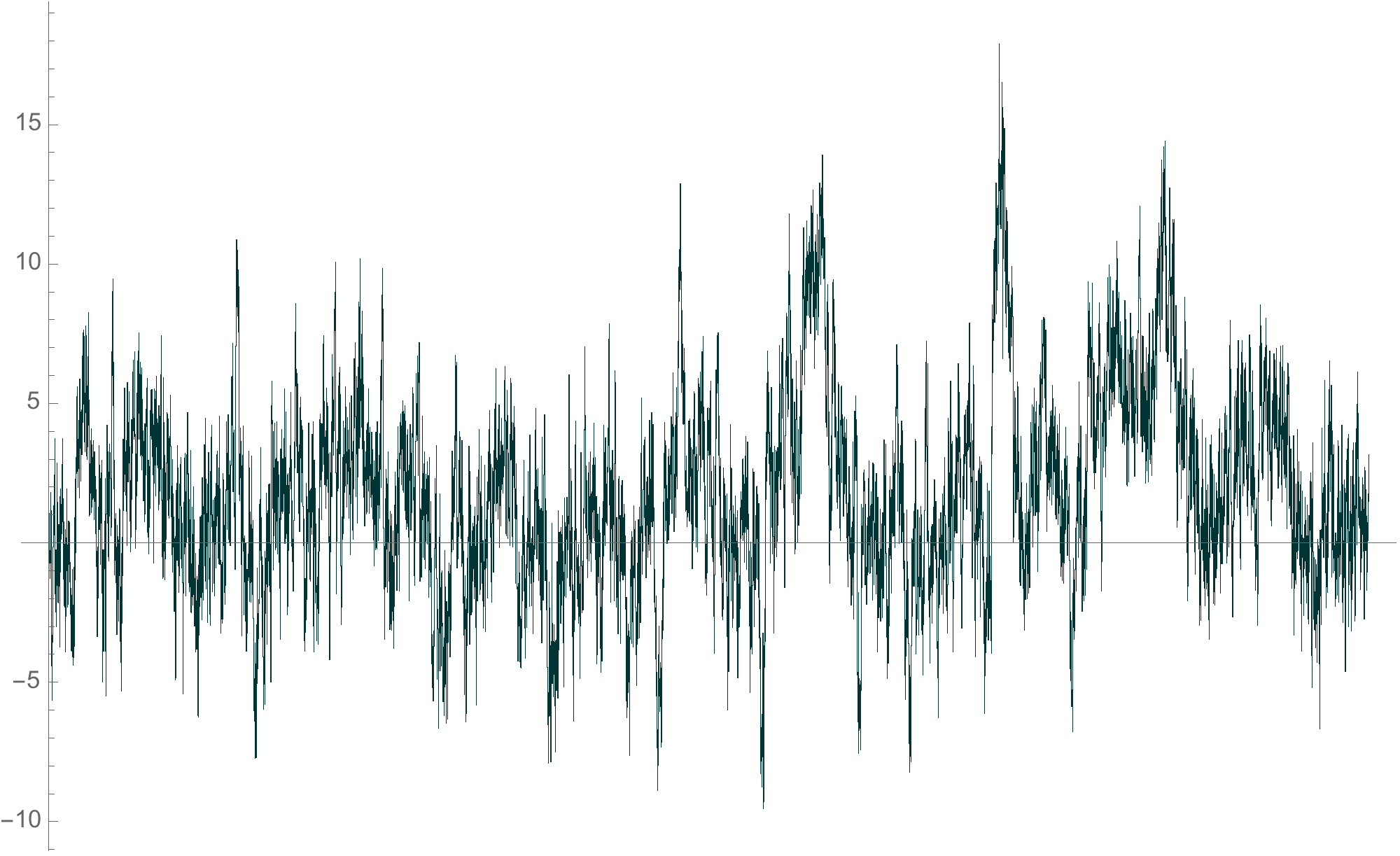}
\qquad
\includegraphics[width=.45\linewidth]{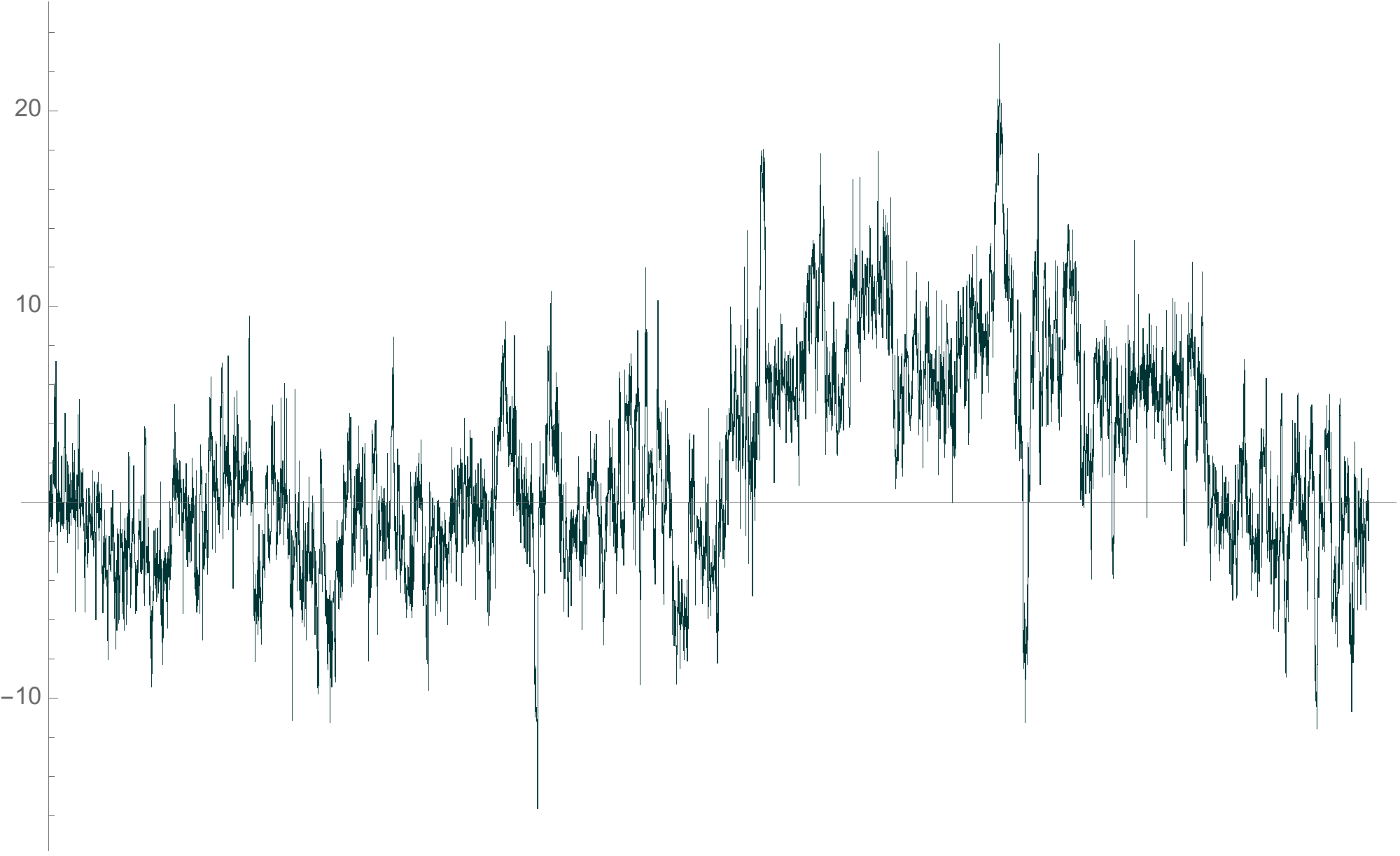}
\caption{Two instances of the spatial height process $\Sigma^{-1} H_n(n \cdot)$ associated with the height process of Figure~\ref{fig:Levy_hauteur}: on the left, $Y$ is uniformly distributed on $[-1, 1]$ and on the right, $Y$ is symmetric and such that $\P(Y > y) = \frac{1}{2} (1 + y)^{-10}$ so both satisfy Theorem~\ref{thm:convergence_serpent_iid}.}
\label{fig:serpents_browniens}
\end{figure}

Janson \& Marckert~\cite{Janson-Marckert:Convergence_of_discrete_snakes} also discuss the case of heavier tails, in which case the spatial processes converge once suitably rescaled towards a `hairy snake' with vertical peaks; statements are more involved and we defer them to Section~\ref{sec:queues_lourdes} below. Let us only mention the next result, which extends Theorem~8 in~\cite{Janson-Marckert:Convergence_of_discrete_snakes}.

\begin{thm}[Non centred snakes]
\label{thm:convergence_serpent_non_centre}
Suppose that $m \coloneqq \E[Y] \ne 0$. Then each process $\frac{B_n}{n} \Hsp_n(n\cdot)$ and $\frac{B_n}{n} \Csp_n(2n\cdot)$ is tight in $\mathscr{C}([0,1], \R)$ \emph{if and only if} $\P(|Y| \ge n/B_n) = o(n^{-1})$, and in this case we have the convergence in distribution in $\mathscr{C}([0,1], \R^4)$
\[\left(\frac{B_n}{n} H_n(n t), \frac{B_n}{n} C_n(2n t), \frac{B_n}{n} \Hsp_n(nt), \frac{B_n}{n} \Csp_n(2nt)\right)_{t \in [0,1]} \cvloi (\Hexc_t, \Hexc_t, m \cdot \Hexc_t, m \cdot \Hexc_t)_{t \in [0,1]}.\]
\end{thm}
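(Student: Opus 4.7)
When $m \ne 0$ the law of large numbers makes the spatial process almost linear along any branch: for $u \in T_n$ at depth $|u|$, $S_u$ is a sum of $|u|$ i.i.d.\ copies of $Y$, hence $S_u \approx m|u|$. Setting $\tau_n := n/B_n$ and defining the centred tree-indexed walk
\[
\tilde{S}_u := S_u - m|u| = \sum_{\varnothing \prec v \preceq u}(Y_v - m),\qquad u \in T_n,
\]
the identities $\Hsp_n(k) = m H_n(k) + \tilde{S}_{u_k}$ and $\Csp_n(k) = m C_n(k) + \tilde{S}_{u_k^{(c)}}$ (in the DFS and contour orderings, respectively) together with Slutsky's lemma applied to~\eqref{eq:cv_GW_Duquesne} reduce the joint convergence in $\mathscr{C}([0,1],\R^4)$ to
\[
\frac{1}{\tau_n}\,\sup_{u \in T_n}|\tilde{S}_u|\cvproba 0
\]
under the tail hypothesis, while the converse direction is needed for tightness alone.

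\textbf{Necessity.} Along a subsequence with $n\,\P(|Y|\ge\tau_n) \ge c > 0$, independence of the $n$ displacements attached to $T_n$ yields $\P(\exists v \in T_n \setminus \{\varnothing\} : |Y_v| \ge \tau_n) \ge 1 - (1-c/n)^n + o(1) \to 1 - e^{-c}$. For such a $v$, both $\Hsp_n$ and $\Csp_n$ exhibit, in a single step between the indices of the parent of $v$ and of $v$ itself, an increment of size $\ge |Y_v|-|m|\ge (1+o(1))\tau_n$; after rescaling by $1/\tau_n$, this produces an oscillation $\ge 1+o(1)$ on a time interval of length $O(1/n)$, precluding tightness in $\mathscr{C}([0,1],\R)$.

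\textbf{Sufficiency: reduction.} Assume $\P(|Y|\ge\tau_n) = o(1/n)$. Truncate: set $\tilde{Y}'_v := (Y_v-m)\ind{|Y_v-m|\le\tau_n}$ and $\tilde{S}'_u := \sum_{\varnothing\prec v\preceq u}\tilde{Y}'_v$, so that $\tilde{S}\equiv\tilde{S}'$ with probability $1 - o(1)$. Write $\mu_n := \E[\tilde{Y}'_v]$ and $(\sigma_n')^2 := \Var(\tilde{Y}'_v)$. Since $\E[|Y|]<\infty$, dominated convergence yields $\mu_n \to 0$, and splitting the Fubini representation $(\sigma_n')^2 = 2\int_0^{\tau_n} s\,\P(|Y-m|>s)\,\d s - \tau_n^2\P(|Y-m|>\tau_n)$ at an arbitrarily large threshold gives $(\sigma_n')^2 = o(\tau_n)$. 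Combined with Duquesne's maximal-height estimate $H_n^* := \sup_u|u| = O_\P(\tau_n)$, which follows from~\eqref{eq:cv_GW_Duquesne}, the drift contribution is controlled by $|\mu_n|\,H_n^* = o_\P(\tau_n)$. It thus remains to bound the centred tree-indexed martingale $M_u := \tilde{S}'_u - \mu_n|u|$, whose increments along any edge lie in $[-2\tau_n, 2\tau_n]$ and whose per-step variance equals $(\sigma_n')^2$.

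\textbf{Main obstacle.} The delicate step is upgrading the branch-wise control — Doob's or Bernstein's inequality applied along any root-to-leaf path of length $\le H_n^*$ gives $\sup_{v\preceq\ell}|M_v| = O_\P(\sqrt{H_n^*\,(\sigma_n')^2}) = o_\P(\tau_n)$ — into a uniform bound over all of $T_n$. A naive union bound fails because the height $H_n^* = O_\P(\tau_n)$ and the breadth (up to $\sim n$ leaves) live on very different scales. The plan is to work on the $2n$-step contour encoding by writing
\[
M_{u_k^{(c)}} = \sum_{e \in T_n}\tilde{Y}'_e\cdot\ind{e \preceq u_k^{(c)}} - \mu_n\,C_n(k)
\]
as a linear functional of the $n$ i.i.d.\ centred variables $(\tilde{Y}'_e - \mu_n)$ with contour-adapted $\{0,1\}$-valued coefficients, and applying a Bernstein-type maximal inequality for such martingale transforms, using the predictable quadratic variation bound $C_n(k)(\sigma_n')^2 = O_\P(\tau_n(\sigma_n')^2) = o_\P(\tau_n^2)$ together with the increment bound $2\tau_n$. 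A $\log(2n)$-factor penalty for the maximum over contour steps is absorbed via $\log n = o(\tau_n)$, which holds for every $\alpha \in (1,2]$.
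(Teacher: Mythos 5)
Your overall strategy matches the paper's: reduce the non-centred case to a sup-norm control of the centred fluctuation process (the paper does the same by observing $\frac{B_n}{n}\E[\Hsp_n(n\cdot)\mid T_n] = m\frac{B_n}{n}H_n(n\cdot) \to m\Hexc$), prove necessity by exhibiting a macroscopic single-step jump with probability bounded away from $0$, and prove sufficiency by truncating the displacements. The necessity argument and the reduction via Slutsky are correct.

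The gap is the choice of truncation level, and it is fatal to the sufficiency argument as you have set it up. You cut at $\tau_n := n/B_n$, so the surviving increments $\tilde{Y}'_v$ are bounded by $2\tau_n$ --- the same scale as the deviation $\epsilon\tau_n$ you need to rule out. In Freedman/Bernstein for a martingale with increments bounded by $R$ and bracket bounded by $V$, the exceedance probability at level $a$ is $\exp\bigl(-\tfrac{a^2/2}{V + Ra/3}\bigr)$; with $R = 2\tau_n$, $a = \epsilon\tau_n$ and $V = H_n^*(\sigma_n')^2 = o_\P(\tau_n^2)$, the exponent converges to the \emph{constant} $-3\epsilon/4$ as $n\to\infty$. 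Multiplying by the $O(n)$ candidate positions (vertices, leaves, or contour steps --- the count is the same order) gives a bound that blows up. Your remark that ``$\log n = o(\tau_n)$'' is beside the point: the obstruction is the term $R\log n = \tau_n\log n$, which must be $o(\tau_n)$ but is not. No maximal inequality cures this, since it reflects a genuine feature of sums of variables bounded by $\tau_n$: a single increment near the cut-off already produces a deviation of order $\tau_n$.

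The paper avoids this by truncating at a strictly smaller scale $b_n = (n^2/B_n)^{\frac{\alpha-1}{2\alpha}+\varepsilon}$, which satisfies $b_n/\tau_n \to 0$ polynomially. This splits the displacements into three ranges, not two, and the intermediate range $b_n < |Y_u| \le \eta\tau_n$ needs its own argument: with high probability no ancestral line contains two such displacements (event $E_n^c$, controlled via the total path length $\Lambda(T_n) = O_\P(n^2/B_n)$), so on $E_n^c$ the contribution of the medium-large jumps to $\sup_k|\Hsp_n''(k)|$ is exactly $\max_u|Y_u''|$, which is $o_\P(\tau_n)$ by a union bound and the tail hypothesis. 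You omit this step entirely by truncating too high. For the small jumps the paper uses Rosenthal's inequality (Petrov, Theorem~2.9) plus Kolmogorov's criterion, with Lemma~\ref{lem:Holder_hauteur_GW_stables} providing the needed bound on the tree-distance $\#\llbracket u_{ns}, u_{nt}\llbracket$; you invoke neither. Your Bernstein-plus-union-bound plan \emph{would} close for the small jumps (since the limit is the null process, a crude union bound over the $n+1$ vertices suffices and Lemma~\ref{lem:Holder_hauteur_GW_stables} can be bypassed here), but only after replacing $\tau_n$ by $b_n$ in the cut-off: then the exponent in the per-vertex Bernstein bound is of order $\tau_n/b_n$, a positive power of $n$, which dominates $\log n$.
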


Again, in the finite-variance case, the assumption is equivalent to $\P(|Y| \ge y) = o(y^{-2})$, which is slightly weaker than $\E[Y^2] < \infty$. Let us comment on the result when $Y \ge 0$ almost surely and $m > 0$. Then for every $u \in T_n$, the displacement $Y_u$ can be interpreted as the length of the edge from $u$ to its parent so $\Hsp_n$ and $\Csp_n$ can be interpreted as the height and contour processes of the tree $T_n$ with such random edge-lengths and Theorem~\ref{thm:convergence_serpent_non_centre} shows that this tree is close to the one obtained by assigning deterministic length $m$ to each edge of $T_n$, and it converges towards $m$ times the stable tree for the Gromov--Hausdorff topology, jointly with the original tree.

The main result of~\cite{Janson-Marckert:Convergence_of_discrete_snakes} has been used very recently by Cai \emph{et al.}~\cite{Cai-Holmgren-Janson-Johansson-Skerman:Inversions_in_split_trees_and_conditional_Galton_Watson_trees} to study the asymptotic number of \emph{inversions} in a random tree. Given the random tree $T_n$ with $n+1$ vertices listed $u_0, u_1, \dots, u_n$ and an independent uniformly random permutation of $\{0, \dots, n\}$, say, $\sigma$, assign the label $\sigma(i)$ to the vertex $u_i$ for every $i \in \{0, \dots, n\}$. The number of inversions of $T_n$ is then defined by
\[I(T_n) = \sum_{0 \le i < j \le n} \ind{u_i \text{ is a ancestor of } u_j} \ind{\sigma(i) > \sigma(j)}.\]
This extends the classical definition of the number of inversions of a permutation, when the tree contains a single branch. We refer to~\cite{Cai-Holmgren-Janson-Johansson-Skerman:Inversions_in_split_trees_and_conditional_Galton_Watson_trees} for a detailed review of the literature on this model. It is easy to see that $\E[I(T_n)\mid T_n]$ is half the so-called \emph{total path length} of $T_n$, whose asymptotic behaviour is well-understood. Cai \emph{et al.}~\cite[Theorem~5]{Cai-Holmgren-Janson-Johansson-Skerman:Inversions_in_split_trees_and_conditional_Galton_Watson_trees} study the fluctuations of $I(T_n)$ when $T_n$ is a size-conditioned Bienaymé--Galton--Watson tree whose offspring distribution admits exponential moments. Their argument is based on the convergence of snakes from~\cite{Janson-Marckert:Convergence_of_discrete_snakes} and extends readily as follows thanks to Theorem~\ref{thm:convergence_serpent_iid}.

\begin{cor}[Inversions on trees]
\label{cor:inversion}
We have the convergences in distribution
\[\frac{2B_n}{n^2} \E[I(T_n)\mid T_n] \cvloi \int_0^1 \Hexc_t \d t,
\quad\text{and}\quad
\left(\frac{B_n}{12n^3}\right)^{1/2} \left(I(T_n) - \E[I(T_n)\mid T_n]\right) \cvloi \int_0^1 \Snake_t \d t.\]
\end{cor}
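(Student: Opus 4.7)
The classical identity $\P(\sigma(u) > \sigma(v) \mid T_n) = 1/2$ for each ordered ancestor pair $(u,v)$ yields
$\E[I(T_n) \mid T_n] = \frac{1}{2} \sum_{v \in T_n} h(v) = \frac{1}{2} \sum_{k=0}^n H_n(k)$.
The right-hand side is a Riemann sum for $\frac{n}{2} \int_0^1 H_n(nt)\,\d t$, so multiplying by $2B_n/n^2$ produces $\int_0^1 \frac{B_n}{n} H_n(nt)\,\d t$; the first convergence then follows from~\eqref{eq:cv_GW_Duquesne} combined with the continuity of Lebesgue integration on the Skorokhod space.

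\textbf{Representation of the fluctuations.} For each $u \in T_n$, let $\beta_u$ be the number of strict descendants $v$ of $u$ with $\sigma(v) < \sigma(u)$, so that $1 + \beta_u$ is the $\sigma$-rank of $u$ inside its subtree of size $\tau(u)$, and $I(T_n) = \sum_u \beta_u$. A classical combinatorial lemma (provable by induction on the tree, peeling off the root and using that conditionally on $\sigma(\varnothing)$ and on the partition of the remaining labels across its subtrees the induced sub-permutations are independent uniform after re-ranking) asserts that, conditionally on $T_n$, the $(\beta_u)_u$ are \emph{mutually independent}, with $\beta_u$ uniform on $\{0, \ldots, \tau(u) - 1\}$. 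This independence is the structural input of the argument; the discrete-uniform variance $\Var(\beta_u \mid T_n) = (\tau(u)^2 - 1)/12$ is the source of the factor $12$ in the normalisation.

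\textbf{Snake comparison and conclusion.} Sample i.i.d.\ random variables $(Y_u)_u$ uniform on $[-1/2, 1/2]$, independent of $T_n$; then $\Sigma^2 = 1/12$ and the tail hypothesis of Theorem~\ref{thm:convergence_serpent_iid} is vacuous. The quantile coupling $\widetilde\beta_u := \lfloor \tau(u)(Y_u + 1/2) \rfloor$ produces a family with the same joint distribution as $(\beta_u)_u$ and satisfies the pathwise identity $\widetilde\beta_u - (\tau(u) - 1)/2 = \tau(u) Y_u + \varepsilon_u$ with $|\varepsilon_u| \le 1/2$. Summing over $u$, and noting that $\sum_u \tau(u) Y_u$ equals the integrated spatial height process $\sum_{k=0}^n \Hsp_n(k)$ of the $(Y_u)_{u \ne \varnothing}$-snake, up to a root contribution of order $n$, yields the distributional identity
\[I(T_n) - \E[I(T_n) \mid T_n] \eqloi \sum_{k=0}^n \Hsp_n(k) + O(n).\]
Theorem~\ref{thm:convergence_serpent_iid} then gives joint convergence of $(\tfrac{B_n}{n} H_n(n\cdot),\, \sqrt{B_n/(n\Sigma^2)}\,\Hsp_n(n\cdot))$ to $(\Hexc, \Snake)$ in $\mathscr{C}([0,1], \R^2)$, and integration (a continuous functional) produces $\sqrt{B_n/(n^3 \Sigma^2)}\sum_k \Hsp_n(k) \to \int_0^1 \Snake_t\,\d t$ jointly with the first convergence. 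The $O(n)$ coupling error, once multiplied by $\sqrt{B_n/n^3}$, is of size $\sqrt{B_n/n} = o(1)$ and vanishes. The only conceptual obstacle is the independence of the subtree ranks; once granted, the rest is a routine coupling plus a continuous-mapping application of Theorem~\ref{thm:convergence_serpent_iid}.
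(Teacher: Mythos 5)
Your proof is correct and takes the same route as the paper's, which itself follows Cai, Holmgren, Janson, Johansson and Skerman \cite{Cai-Holmgren-Janson-Johansson-Skerman:Inversions_in_split_trees_and_conditional_Galton_Watson_trees}: reduce $\E[I(T_n)\mid T_n]$ to half the total path length for the first limit, and couple the fluctuations of $I(T_n)$ to the integrated uniform-increment snake on $T_n$ so that Theorem~\ref{thm:convergence_serpent_iid} applies. The only real difference is that you re-derive, rather than cite, the structural input behind the coupling --- the conditional independence of the subtree ranks $\beta_u$ and the quantile coupling $\widetilde\beta_u=\lfloor\tau(u)(Y_u+1/2)\rfloor$ --- and that you phrase the integral via the height process where the paper works in contour order through the auxiliary step function $\widehat{R}_n$; both choices are inessential. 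One small point to reconcile before you are done: your derivation produces the normalizer $\left(B_n/(n^3\Sigma^2)\right)^{1/2}=\left(12B_n/n^3\right)^{1/2}$, which disagrees by a factor of $12$ with the $\left(B_n/(12n^3)\right)^{1/2}$ displayed in the corollary. The same discrepancy is present in the paper's own final display, which invokes Theorem~\ref{thm:convergence_serpent_iid} to assert $\left(B_n/(12n)\right)^{1/2}\Csp_n(2n\cdot)\to\Snake$, whereas that theorem with $\Sigma^2=1/12$ gives $\left(12B_n/n\right)^{1/2}\Csp_n(2n\cdot)\to\Snake$; this appears to be a slip in the stated constant rather than a flaw in either argument, but it should be sorted out explicitly.
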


Note that the scaling factors are respectively of order $n^{-3/2}$ and $n^{-5/4}$ in the finite-variance regime $B_n = \sqrt{n \sigma^2/2}$. When $\alpha=2$, recall that $\Hexc$ is $\sqrt{2}$ times the standard Brownian excursion; then the distribution of $2 \int_0^1 \Hexc_t \d t$ is known as the \emph{Airy distribution}; further, the random variable $\int_0^1 \Snake_t \d t$ is distributed as
\[\left(\int_{0 \le s < t \le 1} \min_{r \in [s,t]} \Hexc_r \d s \d t\right)^{1/2} \cdot \mathscr{N},\]
where $\mathscr{N}$ is standard Gaussian random variable independent of $\Hexc$; we refer to~\cite{Svante-Chassaing:The_center_of_mass_of_the_ISE_and_the_Wiener_index_of_trees} for more information on this random variable.

The main idea to prove tightness of spatial processes is to appeal to Kolmogorov's criterion, which enables one to avoid dealing with all the correlations between vertices. This requires a strong control on the geometry of the trees. Precisely, although the convergence~\eqref{eq:cv_GW_Duquesne} implies that the sequence $(\frac{B_n}{n} H_n(n \cdot))_{n \ge 1}$ is tight in $\mathscr{C}([0,1], \R)$, we need the following more precise estimate on the geometry of the trees.

\begin{lem}[H{\"o}lder norm of the height process]\label{lem:Holder_hauteur_GW_stables}
For every $\gamma \in (0, (\alpha-1)/\alpha)$, it holds that
\[\lim_{C \to \infty} \liminf_{n \to \infty} \Pr{\sup_{0 \le s \ne t \le 1} \frac{B_n}{n} \cdot \frac{|H_n(nt) - H_n(ns)|}{|t-s|^\gamma} \le C} = 1.\]
\end{lem}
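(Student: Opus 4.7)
\textbf{Proof plan for Lemma~\ref{lem:Holder_hauteur_GW_stables}.}
The goal is to upgrade the weak convergence~\eqref{eq:cv_GW_Duquesne} of $\bar H_n(t) \coloneqq (B_n/n) H_n(nt)$ towards $\Hexc$ to a Hölder-tightness statement with any exponent $\gamma < (\alpha-1)/\alpha$, which matches the known modulus of continuity of the limit. The plan rests on a standard chaining reduction: fix $\gamma' \in (\gamma, (\alpha-1)/\alpha)$; it suffices to show that, for every $\epsilon > 0$, there exists $C$ such that
\[
\liminf_n \P\Bigl( \mathrm{osc}\bigl(\bar H_n;\, [k 2^{-j}, (k+1) 2^{-j}]\bigr) \le C\, 2^{-j\gamma'} \text{ for all } j \ge 0,\, k \in \{0,\dots,2^j - 1\} \Bigr) \ge 1 - \epsilon,
\]
from which a summation over dyadic scales yields the announced $\gamma$-Hölder bound.

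The core combinatorial step is to bound the oscillation of $H_n$ on a time window of width $\delta n$ by a constant multiple of the maximum height of a subtree of $T_n$ whose depth-first exploration lies entirely inside that window. Between times $\lfloor na \rfloor$ and $\lfloor n b \rfloor$, the height process either descends into one of the subtrees of the common ancestor $u^*$ of $u_{\lfloor na \rfloor}$ and $u_{\lfloor nb \rfloor}$, or it is on its way to or from $u^*$; along the way $H_n$ visits a collection of subtrees whose DFS interval lies in $[na, nb]$, and, after a recursive treatment of the two ``boundary'' subtrees containing $u_{\lfloor na\rfloor}$ and $u_{\lfloor nb\rfloor}$, the oscillation is dominated by twice this maximum subtree height.

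To estimate that quantity, I would use the branching property of the Łukasiewicz exploration: each subtree of $T_n$ of size $m$ is, conditionally on its size, distributed as a $\mu$-Bienaymé–Galton–Watson tree conditioned on $m+1$ vertices. Tail bounds available in the stable regime then yield, for some $p > \alpha/(\alpha-1)$ and $c>0$,
\[
\P\bigl(\mathrm{height}(\tau) \ge x \cdot m/B_m \bigm| |\tau| = m+1\bigr) \le c\, x^{-p}, \qquad x \ge 1,
\]
uniformly in $m \ge 1$. Since $(B_n)$ is regularly varying of index $1/\alpha$, a Potter-type bound gives $m/B_m \le C_0\, \delta^{(\alpha-1)/\alpha - \epsilon}\, n/B_n$ whenever $m \le \delta n$, the $\epsilon$-loss being absorbed in the slack $\gamma' < (\alpha-1)/\alpha$. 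A union bound over the at most $n$ candidate subtrees per dyadic level and the $\le \log_2 n$ levels then produces the desired oscillation estimate.

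The main obstacle is the polynomial height-tail bound for conditioned stable GW trees displayed above. In the finite-variance case ($\alpha=2$) this follows from the classical sub-Gaussian estimate of Addario-Berry–Devroye–Janson; for $\alpha \in (1,2)$ one must either invoke the more delicate stable-regime estimates of Kortchemski, or derive them directly from a local limit theorem for the Łukasiewicz excursion combined with tail estimates on the Vervaat-type re-rooting of the unconditioned walk. A secondary subtlety is the recursive treatment of the two boundary subtrees in the oscillation decomposition, which must be carried out carefully so as not to generate a parasitic $\log n$ factor.
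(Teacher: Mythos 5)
Your plan is structurally quite different from the paper's. The paper does not chain over dyadic scales and does not bound subtree heights; instead it obtains, for arbitrary $s<t$ with $ns, nt$ integers, exponential tail bounds on the two one-sided quantities $H_n(nt)-\inf_{[s,t]}H_n$ and $H_n(ns)-\inf_{[s,t]}H_n$, by reading each as a count of weak records of the (time-and-space reversed) {\L}ukasiewicz path, invoking Doney's Tauberian estimate~\eqref{eq:Tauberien} for the record-time distribution, Potter bounds, and the bridge-to-walk comparison. These exponential moment bounds then feed Kolmogorov's criterion directly. The two-sidedness forces a genuine extra idea: the ``left branch'' is controlled via a mirror-tree trick, whose justification requires a separate spine/Chernoff estimate (Lemma~\ref{lem:longueur_branches}) to guarantee that mirroring does not blow up lexicographical distances.

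The gap in your plan lies precisely where the paper expends the most effort. Your reduction of the oscillation on a window to ``twice the maximum height of a subtree whose DFS interval lies entirely inside the window'' is \emph{not} a valid deterministic bound. Writing $u^\ast$ for the deepest common ancestor realizing $\inf_{[a,b]}H_n$, the oscillation decomposes into the depth of the deepest vertex inside some fringe subtree hanging off the ancestral path \emph{plus} the length of the two ancestral segments from $u^\ast$ up to $u_{\lfloor na\rfloor}$ and up to $u_{\lfloor nb\rfloor}$ that are visited within the window. Those two segments are not contained in any fringe subtree explored entirely inside $[na,nb]$ --- the fringe subtree rooted at a vertex on the ancestral path to $u_{\lfloor nb\rfloor}$ has its DFS interval extending past $nb$. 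Concretely, in a long chain followed by a bush, the oscillation on the chain portion equals the chain length although no fringe subtree is explored inside that window, so the claimed domination fails. What you call ``a secondary subtlety, the recursive treatment of the two boundary subtrees'' is in fact exactly the content of estimates~\eqref{eq:branche_droite}--\eqref{eq:branche_gauche}: the boundary contributions \emph{are} the weak-record counts, and bounding them is the core of the proof, not a cleanup step; the left boundary in particular is where the mirror argument and Lemma~\ref{lem:longueur_branches} become indispensable. A further, secondary, worry is your union bound: the number of fringe subtrees with DFS interval inside a window of length $\delta n$ can be of order $\delta n$, so summing over the $2^j$ windows at level $j$ naively yields a factor $n$ that must be absorbed; as written the proposal does not address this.

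So the plan as stated is not a valid alternative proof. If you want to pursue this route you must supply a bound on the ancestral-segment contribution inside each dyadic window (which will almost certainly bring back the {\L}ukasiewicz-record machinery), and then reconcile the union bound with the $n$ available subtrees --- and the cited height-tail input from Kortchemski is itself proved by essentially the record-counting method the paper uses, so the ``different'' route would still rest on the same random-walk estimates.
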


By very different means, Gittenberger~\cite{Gittenberger:A_note_on} proved a similar statement for the contour function $C_n$, in the case $\alpha=2$, when the offspring distribution admits finite exponential moments\footnote{Even if the assumption is written as `finite variance' in~\cite{Gittenberger:A_note_on}, the proof actually requires exponential moments.} and Janson \& Marckert~\cite{Janson-Marckert:Convergence_of_discrete_snakes} built upon this result. Note that the maximal exponent $(\alpha-1)/\alpha$ corresponds to the maximal exponent for which the limit process $\Hexc$ is H{\"o}lder continuous, see~\cite[Theorem~1.4.4]{Duquesne-Le_Gall:Random_trees_Levy_processes_and_spatial_branching_processes}.

\subsection{More general models and random maps}

The initial motivation for studying spatial trees comes from the theory of \emph{random planar maps}. Indeed, the \emph{Schaeffer bijection} relates uniformly random \emph{quadrangulations} of the sphere with $n$ faces and such a model of spatial trees, when $\mu$ is the geometric distribution with parameter $1/2$, in which case $T_n$ has the uniform distribution amongst plane trees of size $n+1$, and when $Y$ has the uniform distribution on $\{-1, 0, 1\}$. The convergence of this particular spatial tree has been obtained Chassaing \& Schaeffer~\cite{Chassaing-Schaeffer:Random_planar_lattices_and_integrated_super_Brownian_excursion}. More general models of random maps are also related to spatial trees, via the \emph{Bouttier--Di Francesco--Guitter} bijection \cite{Bouttier-Di_Francesco-Guitter:Planar_maps_as_labeled_mobiles} and the \emph{Janson-Stef{\'a}nsson} bijection \cite{Janson-Stefansson:Scaling_limits_of_random_planar_maps_with_a_unique_large_face}; however, in this case, the displacements are neither independent nor identically distributed. Analogous convergences to Theorem~\ref{thm:convergence_serpent_iid} in this case have been proved by Marckert \& Mokkadem~\cite{Marckert-Mokkadem:States_spaces_of_the_snake_and_its_tour_convergence_of_the_discrete_snake} still for the uniform random trees, but for general displacements, under a `$(8+\varepsilon)$-moment' assumption; Gittenberger~\cite{Gittenberger:A_note_on} extended this result to the case where $\mu$ has finite exponential moments, and then Marckert \& Miermont~\cite{Marckert-Miermont:Invariance_principles_for_random_bipartite_planar_maps} reduced the assumptions on the displacements to a `$(4+\varepsilon)$-moment', see also Miermont~\cite{Miermont:Invariance_principles_for_spatial_multitype_Galton_Watson_trees} for similar results on multi-type Bienaymé--Galton--Watson trees, Marckert~\cite{Marckert:The_lineage_process_in_Galton_Watson_trees_and_globally_centered_discrete_snakes} for `globally centred' displacements, and finally~\cite{Marzouk:Scaling_limits_of_random_bipartite_planar_maps_with_a_prescribed_degree_sequence} for trees (more general than size-conditioned Bienaymé--Galton--Watson trees) with finite variance, but only for the very particular displacements associated with maps. Appealing to Lemma~\ref{lem:Holder_hauteur_GW_stables}, it seems that the `$(4+\varepsilon)$-moment' assumption suffices in the case where $\mu$ belongs to the domain of attraction of a Gaussian law to ensure the convergence towards $(\Hexc, \Snake)$. However in the $\alpha$-stable case with $\alpha < 2$, the limit may be different and depend more precisely on the displacements, see Le Gall \& Miermont~\cite{Le_Gall-Miermont:Scaling_limits_of_random_planar_maps_with_large_faces}, again for the very particular displacements associated with maps.

\subsection{Techniques}

The rest of this paper is organised as follows: In Section~\ref{sec:BGW}, we first recall the coding of plane trees by paths and define the limit object of interest $\Snake$; after recalling a few results on slowly varying functions and well-known results on Bienaymé--Galton--Watson trees, we prove Lemma~\ref{lem:Holder_hauteur_GW_stables}. The idea is to rely on the {\L}ukasiewicz path of the tree, since height of vertices corresponds to positive records of the latter, which is an excursion of a left-continuous random walk in the domain of attraction of a stable law, so it already has attracted a lot of attention and we may use several existing results, such as those due to Doney~\cite{Doney:On_the_exact_asymptotic_behaviour_of_the_distribution_of_ladder_epochs}.

In Section~\ref{sec:serpents}, we prove Theorem~\ref{thm:convergence_serpent_iid}, Theorem~\ref{thm:convergence_serpent_non_centre} and Corollary~\ref{cor:inversion}. The proof of the two theorems follows the ideas of Janson \& Marckert~\cite{Janson-Marckert:Convergence_of_discrete_snakes} which are quite general once we have Lemma~\ref{lem:Holder_hauteur_GW_stables}. However, several technical adaptations are needed here to deal with the heavier tails for the offspring distribution. Finally, in Section~\ref{sec:queues_lourdes}, we state and prove results on the convergence of similar to Theorem~\ref{thm:convergence_serpent_iid} when $Y$ has heavier, regularly varying tails. Again, the proof scheme follows that of~\cite{Janson-Marckert:Convergence_of_discrete_snakes} but requires technical adaptation.

\subsection*{Acknowledgment}
I wish to thank Nicolas Curien for a stimulating discussion on the proof of Lemma~\ref{lem:Holder_hauteur_GW_stables} when I started to have some doubts on the strategy used below. Many thanks are also due to Igor Kortchemski who spotted a mistake in a first draft.

This work was supported by a public grant as part of the Fondation Mathématique Jacques Hadamard.

\section{Geometry of large Bienaymé--Galton--Watson trees}
\label{sec:BGW}

\subsection{Discrete and continuous snakes}
\label{sec:def_arbres_discrets_continus}

We follow the notation of Neveu~\cite{Neuveu:Arbres_et_processus_de_Galton_Watson} and view discrete trees as words. Let $\N = \{1, 2, \dots\}$ be the set of all positive integers, set $\N^0 = \{\varnothing\}$ and consider the set $\U = \bigcup_{n \ge 0} \N^n$. For every $u = (u_1, \dots, u_n) \in \U$, we denote by $|u| = n$ the length of $u$; if $n \ge 1$, we define its \emph{prefix} $pr(u) = (u_1, \dots, u_{n-1})$ and we let $\chi_u = u_n$; for $v = (v_1, \dots, v_m) \in \U$, we let $uv = (u_1, \dots, u_n, v_1, \dots, v_m) \in \U$ be the concatenation of $u$ and $v$. We endow $\U$ with the \emph{lexicographical order}: given $u, v \in \U$, let $w \in \U$ be their longest common prefix, that is $u = w(u_1, \dots, u_n)$, $v = w(v_1, \dots, v_m)$ and $u_1 \ne v_1$, then $u < v$ if $u_1 < v_1$.

A (plane) \emph{tree} is a non-empty, finite subset $T \subset \U$ such that:
\begin{enumerate}
\item $\varnothing \in T$;
\item if $u \in T$ with $|u| \ge 1$, then $pr(u) \in T$;
\item if $u \in T$, then there exists an integer $k_u \ge 0$ such that $ui \in T$ if and only if $1 \le i \le k_u$.
\end{enumerate}

We shall view each vertex $u$ of a tree $T$ as an individual of a population for which $T$ is the genealogical tree. The vertex $\varnothing$ is called the \emph{root} of the tree and for every $u \in T$, $k_u$ is the number of \emph{children} of $u$ (if $k_u = 0$, then $u$ is called a \emph{leaf}, otherwise, $u$ is called an \emph{internal vertex}) and $u1, \dots, uk_u$ are these children from left to right, $\chi_u$ is the relative position of $u$ among its siblings, $|u|$ is its \emph{generation}, $pr(u)$ is its \emph{parent} and more generally, the vertices $u, pr(u), pr \circ pr (u), \dots, pr^{|u|}(u) = \varnothing$ are its \emph{ancestors}; the longest common prefix of two elements is their \emph{last common ancestor}. We shall denote by $\llbracket u , v \rrbracket$ the unique non-crossing path between $u$ and $v$.

Fix a tree $T$ with $n+1$ vertices, listed $\varnothing = u_0 < u_1 < \dots < u_n$ in lexicographical order. We describe three discrete paths which each encode $T$. First, its \emph{{\L}ukasiewicz path} $W = (W(j) ; 0 \le j \le n+1)$ is defined by $W(0) = 0$ and for every $0 \le j \le n$,
\[W(j+1) = W(j) + k_{u_j}-1.\]
One easily checks that $W(j) \ge 0$ for every $0 \le j \le n$ but $W(n+1)=-1$. Next, we define the \emph{height process} $H = (H(j); 0 \le j \le n)$ by setting for every $0 \le j \le n$,
\[H(j) = |u_j|.\]
Finally, define the \emph{contour sequence} $(c_0, c_1, \dots ,c_{2n})$ of $T$ as follows: $c_0 = \varnothing$ and for each $i \in \{0, \dots, 2n-1\}$, $c_{i+1}$ is either the first child of $c_i$ which does not appear in the sequence $(c_0, \dots, c_i)$, or the parent of $c_i$ if all its children already appear in this sequence. The lexicographical order on the tree corresponds to the \emph{depth-first search order}, whereas the contour order corresponds to `moving around the tree in clockwise order'. The \emph{contour process} $C = (C(j) ; 0 \le j \le 2n)$ is defined by setting for every $0 \le j \le 2n$,
\[C(j) = |c_j|.\]
We refer to Figure~\ref{fig:codage_arbre} for an illustration of these functions.

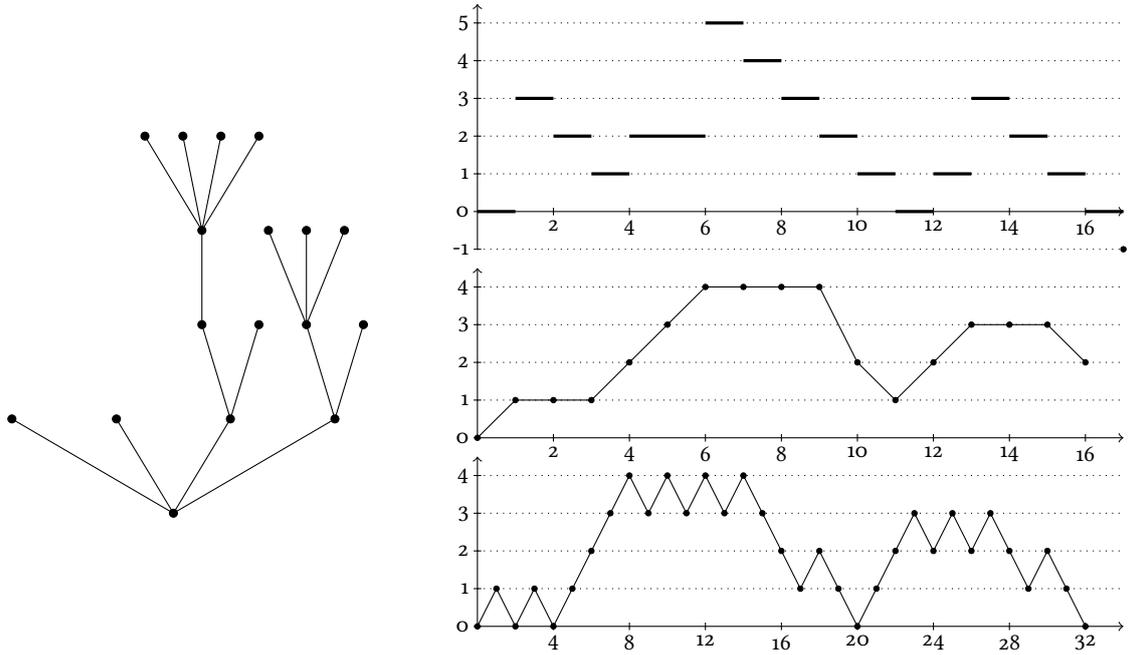
\begin{figure}[!ht]\centering
\def\r{.6}
\def\longueur{2.5}
\begin{tikzpicture}[scale=.5]
\coordinate (1) at (0,0*\longueur);
	\coordinate (2) at (-4.25,1*\longueur);
	\coordinate (3) at (-1.5,1*\longueur);
	\coordinate (4) at (1.5,1*\longueur);
		\coordinate (5) at (.75,2*\longueur);
			\coordinate (6) at (.75,3*\longueur);
				\coordinate (7) at (-.75,4*\longueur);
				\coordinate (8) at (.25,4*\longueur);
				\coordinate (9) at (1.25,4*\longueur);
				\coordinate (10) at (2.25,4*\longueur);
		\coordinate (11) at (2.25,2*\longueur);
	\coordinate (12) at (4.25,1*\longueur);
		\coordinate (13) at (3.5,2*\longueur);
			\coordinate (14) at (2.5,3*\longueur);
			\coordinate (15) at (3.5,3*\longueur);
			\coordinate (16) at (4.5,3*\longueur);
		\coordinate (17) at (5,2*\longueur);

\draw
	(1) -- (2)	(1) -- (3)	(1) -- (4)	(1) -- (12)
	(4) -- (5)	(4) -- (11)
	(5) -- (6)
	(6) -- (7)	(6) -- (8)	(6) -- (9)	(6) -- (10)
	(12) -- (13)	(12) -- (17)
	(13) -- (14)	(13) -- (15)	(13) -- (16)
;

\foreach \x in {1, 2, ..., 17}
\draw[fill=black] (\x) circle (3pt);
\begin{footnotesize}
\begin{scope}[shift={(8,8)}]
\draw[thin, ->]	(0,0) -- (17,0);
\draw[thin, ->]	(0,-1) -- (0,5.5);
\foreach \x in {-1, 1, 2, 3, 4, 5}
	\draw[dotted]	(0,\x) -- (17,\x);
\foreach \x in {-1, 0, 1, ..., 5}
	\draw (.1,\x)--(-.1,\x)	(0,\x) node[left] {\x};
\foreach \x in {2, 4, ..., 16}
	\draw (\x,.1)--(\x,-.1)	(\x,0) node[below] {\x};

\draw[very thick]
	(0, 0) -- ++ (1,0)
	++(0,3) -- ++ (1,0)
	++(0,-1) -- ++ (1,0)
	++(0,-1) -- ++ (1,0)
	++(0,1) -- ++ (1,0)
	++(0,0) -- ++ (1,0)
	++(0,3) -- ++ (1,0)
	++(0,-1) -- ++ (1,0)
	++(0,-1) -- ++ (1,0)
	++(0,-1) -- ++ (1,0)
	++(0,-1) -- ++ (1,0)
	++(0,-1) -- ++ (1,0)
	++(0,1) -- ++ (1,0)
	++(0,2) -- ++ (1,0)
	++(0,-1) -- ++ (1,0)
	++(0,-1) -- ++ (1,0)
	++(0,-1) -- ++ (1,0)
;
\draw[fill=black] (17,-1) circle (2pt);
\end{scope}
\begin{scope}[shift={(8,2)}]
\draw[thin, ->]	(0,0) -- (17,0);
\draw[thin, ->]	(0,0) -- (0,4.5);
\foreach \x in {1, 2, ..., 4}
	\draw[dotted]	(0,\x) -- (17,\x);
\foreach \x in {0, 1, ..., 4}
	\draw (.1,\x)--(-.1,\x)	(0,\x) node[left] {\x};
\foreach \x in {2, 4, ..., 16}
	\draw (\x,.1)--(\x,-.1)	(\x,0) node[below] {\x};

\draw[fill=black]
	(0, 0) circle (2pt) --
	++ (1, 1) circle (2pt) --
	++ (1, 0) circle (2pt) --
	++ (1, 0) circle (2pt) --
	++ (1, 1) circle (2pt) --
	++ (1, 1) circle (2pt) --
	++ (1, 1) circle (2pt) --
	++ (1, 0) circle (2pt) --
	++ (1, 0) circle (2pt) --
	++ (1, 0) circle (2pt) --
	++ (1, -2) circle (2pt) --
	++ (1, -1) circle (2pt) --
	++ (1, 1) circle (2pt) --
	++ (1, 1) circle (2pt) --
	++ (1, 0) circle (2pt) --
	++ (1, 0) circle (2pt) --
	++ (1, -1) circle (2pt)
;
\end{scope}
\begin{scope}[shift={(8,-3)}]
\draw[thin, ->]	(0,0) -- (17,0);
\draw[thin, ->]	(0,0) -- (0,4.5);
\foreach \x in {1, 2, ..., 4}
	\draw[dotted]	(0,\x) -- (17,\x);
\foreach \x in {0, 1, ..., 4}
	\draw (.1,\x)--(-.1,\x)	(0,\x) node[left] {\x};
\foreach \x in {4, 8, ..., 32}
	\draw (.5*\x,.1)--(.5*\x,-.1)	(.5*\x,0) node[below] {\x};

\draw[fill=black]
	(0, 0) circle (2pt) --
	++ (.5, 1) circle (2pt) --
	++ (.5, -1) circle (2pt) --
	++ (.5, 1) circle (2pt) --
	++ (.5, -1) circle (2pt) --
	++ (.5, 1) circle (2pt) --
	++ (.5, 1) circle (2pt) --
	++ (.5, 1) circle (2pt) --
	++ (.5, 1) circle (2pt) --
	++ (.5, -1) circle (2pt) --
	++ (.5, 1) circle (2pt) --
	++ (.5, -1) circle (2pt) --
	++ (.5, 1) circle (2pt) --
	++ (.5, -1) circle (2pt) --
	++ (.5, 1) circle (2pt) --
	++ (.5, -1) circle (2pt) --
	++ (.5, -1) circle (2pt) --
	++ (.5, -1) circle (2pt) --
	++ (.5, 1) circle (2pt) --
	++ (.5, -1) circle (2pt) --
	++ (.5, -1) circle (2pt) --
	++ (.5, 1) circle (2pt) --
	++ (.5, 1) circle (2pt) --
	++ (.5, 1) circle (2pt) --
	++ (.5, -1) circle (2pt) --
	++ (.5, 1) circle (2pt) --
	++ (.5, -1) circle (2pt) --
	++ (.5, 1) circle (2pt) --
	++ (.5, -1) circle (2pt) --
	++ (.5, -1) circle (2pt) --
	++ (.5, 1) circle (2pt) --
	++ (.5, -1) circle (2pt) --
	++ (.5, -1) circle (2pt)
;
\end{scope}
\end{footnotesize}
\end{tikzpicture}
\caption{A tree on the left, and on the right, from top to bottom: its {\L}ukasiewicz path $W$, its height process $H$, and its contour process $C$.}
\label{fig:codage_arbre}
\end{figure}

A \emph{spatial tree} $(T, (S_u; u \in T))$ is a tree $T$ in which each individual $u$ is assigned a spatial position $S_u \in \R$, with $S_\varnothing = 0$. We encode these positions via the \emph{spatial height} and \emph{spatial contour} processes $\Hsp$ and $\Csp$ respectively, defined by $\Hsp(j) = S_{u_j}$ for every $0 \le j \le n$ and $\Csp(j) = S_{c_j}$ for every $0 \le j \le 2n$, where $n$ is the number of edges of the tree. See Figure~\ref{fig:codage_arbre_spatial} for an illustration of $\Hsp$.

~
\begin{center}
\begin{minipage}{.85\linewidth}
\emph{Without further notice, throughout this work, every {\L}ukasiewicz path shall be viewed as a step function, jumping at integer times, whereas height and contour processes, as well as their spatial versions, shall be viewed as continuous functions after interpolating linearly between integer times.}
\end{minipage}
\end{center}
~

The analogous continuous objects we shall consider are the \emph{stable Lévy tree} of Duquesne, Le Gall and Le Jan~\cite{Duquesne:A_limit_theorem_for_the_contour_process_of_conditioned_Galton_Watson_trees,Le_Gall-Le_Jan:Branching_processes_in_Levy_processes_the_exploration_process} which generalise Aldous' Brownian Continuum Random Tree~\cite{Aldous:The_continuum_random_tree_3} in the case $\alpha=2$. Recall that $\Hexc = (\Hexc_t ; t \in [0,1])$ denotes the excursion of the \emph{height process} associated with the $\alpha$-stable Lévy process with no negative jump; we shall not need the precise definition of this process but we refer the reader to~\cite[Section~3.1 and~3.2]{Duquesne:A_limit_theorem_for_the_contour_process_of_conditioned_Galton_Watson_trees}. For every $s, t \in [0,1]$, set
\[d_\Hexc(s,t) = \Hexc_s + \Hexc_t - 2 \min_{r \in [s \wedge t, s \vee t]} \Hexc_r.\]
One easily checks that $d_\Hexc$ is a random pseudo-metric on $[0,1]$, we then define an equivalence relation on $[0,1]$ by setting $s \sim_\Hexc t$ whenever $d_\Hexc(s,t)=0$. Consider the quotient space $\CRT_\alpha = [0,1] / \sim_\Hexc$, we let $\pi_\Hexc$ be the canonical projection $[0,1] \to \CRT_\alpha$; then $d_\Hexc$ induces a metric on $\CRT_\alpha$ that we still denote by $d_\Hexc$. The space $(\CRT_\alpha, d_\Hexc)$ is a so-called compact real-tree, naturally rooted at $\pi_\Hexc(0) = \pi_\Hexc(1)$, called the \emph{stable tree} coded by $\Hexc$.

We construct another process $\Snake = (\Snake_t ; t \in [0,1])$ on the same probability space as $\Hexc$ which, conditional on $\Hexc$, is a centred Gaussian process satisfying for every $0 \le s \le t \le 1$,
\[\Esc{|\Snake_s - \Snake_t|^2}{\Hexc} = d_\Hexc(s,t)
\qquad\text{or, equivalently,}\qquad
\Esc{\Snake_s \Snake_t}{\Hexc} = \min_{r \in [s, t]} \Hexc_r.\]
Observe that, almost surely, $\Snake_0=0$ and $\Snake_s = \Snake_t$ whenever $s \sim_\Hexc t$ so $\Snake$ can be seen as a Brownian motion indexed by $\CRT_\alpha$ by setting $\Snake_{\pi_\Hexc(t)} = \Snake_t$ for every $t \in [0,1]$. We interpret $\Snake_x$ as the spatial position of an element $x \in \CRT_\alpha$; the pair $(\CRT_\alpha, (\Snake_x; x \in \CRT_\alpha))$ is a continuous analog of spatial plane trees. 

The \emph{Brownian snake} driven by $\Hexc$~\cite{Le_Gall:Nachdiplomsvorlesung,Duquesne-Le_Gall:Random_trees_Levy_processes_and_spatial_branching_processes} is a path-valued process which associates with each time $t \in [0,1]$ the hole path of values $\Snake_x$ where $x$ ranges over all the ancestors of $\pi_\Hexc(t)$ in $\CRT_\alpha$, from the root to $\pi_\Hexc(t)$, so the process $\Snake$ that we consider is only its `tip', which is called the \emph{head of the Brownian snake}. In this work we only consider the head of the snakes, which is in principle different than the entire snakes; nevertheless, Marckert \& Mokkadem~\cite{Marckert-Mokkadem:States_spaces_of_the_snake_and_its_tour_convergence_of_the_discrete_snake} proved a homeomorphism theorem which translates one into the other. Theorem~\ref{thm:convergence_serpent_iid} then implies the convergence of the whole snake towards the Brownian snake, see~\cite[Corollary~2]{Janson-Marckert:Convergence_of_discrete_snakes}.

It is known, see, e.g.~\cite[Chapter~IV.4]{Le_Gall:Nachdiplomsvorlesung} on the whole Brownian snake, that the pair $(\Hexc, \Snake)$ admits a continuous version and, without further notice, we shall work throughout this paper with this version.

\subsection{Bienaymé--Galton--Watson trees and random walks}

Recall that $\mu$ is a probability measure on $\Z_+$ satisfying a few assumptions given in the introduction. The Bienaymé--Galton--Watson distribution is the law on the set of all finite plane trees, which gives mass $\prod_{u \in T} \mu(k_u)$ to every such tree $T$. We then denote by $T_n$ such a random tree conditioned to have $n+1$ vertices. 

The key to prove Lemma~\ref{lem:Holder_hauteur_GW_stables} is a well-known relation between the height process $H_n$ and the {\L}ukasiewicz path $W_n$, as well as a representation of the latter from a random walk. Our argument is inspired by the work of Le Gall \& Miermont~\cite[Proof of Lemma~6 and~7]{Le_Gall-Miermont:Scaling_limits_of_random_planar_maps_with_large_faces} who consider an infinite forest of unconditioned trees, which is slightly easier thanks to the fact that the {\L}ukasiewicz path is then a non-conditioned random walk; furthermore, there it is supposed that $\mu([k, \infty)) \sim c k^{-\alpha}$ for some constant $c > 0$, which is a stronger assumption that ours, and several arguments do not carry over.

\subsubsection{On slowly varying functions and domains of attraction}
\label{sec:variation_lente}

Let us present a few prerequisites on slowly varying functions. First, recall that a function $l : [0, \infty) \to \R$ is said to be slowly varying (at infinity) when for every $c > 0$, it holds that
\[\lim_{x \to \infty} \frac{l(cx)}{l(x)} = 1.\]
A property of slowly varying functions that we shall used repeatedly in Section~\ref{sec:serpents} and~\ref{sec:queues_lourdes} is that for every $\varepsilon > 0$, it holds that
\[\lim_{x \to \infty} x^{-\varepsilon} l(x) = 0,
\qquad\text{and}\qquad
\lim_{x \to \infty} x^\varepsilon l(x) = \infty,\]
see e.g. Seneta's book~\cite{Seneta:Regularly_varying_functions} for more information on slowly varying functions (see Chapter~1.5 there for this property).

Let us fix a random variable $X$ on $\{-1, 0, 1, \dots\}$ with law $\P(X = k) = \mu(k+1)$ for every $k \ge -1$, so $\E[X]=0$. Since $\mu$ belongs to the domain of attraction of a stable law with index $\alpha \in (1,2]$, there exists two slowly varying functions $L$ and $L_1$ such that for every $n \ge 1$,
\[\Es{X^2 \ind{X \le n}} = n^{2-\alpha} L(n)
\qquad\text{and}\qquad
\Pr{X \ge n} = n^{-\alpha} L_1(n).\]
The two functions are related by
\[\lim_{n \to \infty} \frac{L_1(n)}{L(n)} 
= \lim_{n \to \infty} \frac{n^2 \P(X \ge n)}{\E[X^2 \ind{X \le n}]} 
= \frac{2-\alpha}{\alpha},\]
see Feller~\cite[Chapter~XVII, Equation 5.16]{Feller:An_introduction_to_probability_theory_and_its_applications_Volume_2}. 
We shall need a third slowly varying function $L^\ast$ (see Doney~\cite[Equation~2.2]{Doney:On_the_exact_asymptotic_behaviour_of_the_distribution_of_ladder_epochs}), defined uniquely up to asymptotic equivalence as the \emph{conjugate} of $1/L$ by the following equivalent asymptotic relations:
\[L(x)^{-1/\alpha} L^\ast(x^\alpha L(x)^{-1}) \cv[x] 1
\qquad\text{and}\qquad
L^\ast(x)^{-\alpha} L(x^{1/\alpha} L^\ast(x)) \cv[x] 1.\]
We refer to~\cite[Chapter~1.6]{Seneta:Regularly_varying_functions} for more information about conjugation of slowly varying functions. Let $S = (S(n))_{n \ge 0}$ be a random walk started from $0$ with step distribution $X$. As recalled in the introduction, there exists an increasing sequence $(B_n)_{n \ge 1}$ such that if $(X_n)_{n \ge 1}$ are i.d.d. copies of $X$, then $B_n^{-1} S(n)$ converges in distribution to some $\alpha$-stable random variable. The sequence $\ell(n) = n^{-1/\alpha} B_n$ is slowly varying at infinity and in fact, the ratio $L^\ast(n)/\ell(n)$ converges to some positive and finite limit. For $\alpha < 2$, this was observed by Doney~\cite{Doney:On_the_exact_asymptotic_behaviour_of_the_distribution_of_ladder_epochs}, but it extends to the case $\alpha=2$, see the remark between Equation~2.2 and Theorem~1 in~\cite{Doney:On_the_exact_asymptotic_behaviour_of_the_distribution_of_ladder_epochs}: the function $L$ there is $1/L$ here. By comparing the preceding asymptotic relations between $L$ and $L^\ast$ to \cite[Equation~7]{Kortchemski:Sub_exponential_tail_bounds_for_conditioned_stable_Bienayme_Galton_Watson_trees}, one gets precisely
\[\lim_{x \to \infty} \frac{L^\ast(x)}{\ell(x)} = \frac{1}{(2-\alpha) \Gamma(-\alpha)},\]
where, by continuity, the limit is interpreted as equal to $2$ if $\alpha=2$.

Doney~\cite[Theorem~1]{Doney:On_the_exact_asymptotic_behaviour_of_the_distribution_of_ladder_epochs} studies the behaviour of the strict record times of the walk $S$, but his work extends \emph{mutatis mutandis} to weak record times: let $\tau_0 = 0$ and for every $i \ge 1$, let $\tau_i = \inf\{k > \tau_{i-1} : S(k) \ge S(\tau_{i-1})\}$; in other words, the times $(\tau_n)_{n \ge 0}$ list those $k \ge 0$ such that $S(k) = \max_{0 \le i \le k} S(i)$. Then the random variables $(\tau_{n+1} - \tau_n)_{n \ge 0}$ are i.d.d. and according to~\cite[Theorem~1]{Doney:On_the_exact_asymptotic_behaviour_of_the_distribution_of_ladder_epochs}, it holds that
\begin{equation}\label{eq:Doney_temps_records}
\Pr{\tau_1 \ge n} \enskip\mathop{\sim}^{}_{n \to \infty}\enskip C \cdot n^{-\frac{\alpha-1}{\alpha}} L^\ast(n),
\end{equation}
with a constant $C > 0$ which shall not be important here. By a Tauberian theorem, see e.g.~\cite[Chapter~XVII, Theorem~5.5]{Feller:An_introduction_to_probability_theory_and_its_applications_Volume_2} it follows that
\begin{align*}
1-\Es{\e^{-\lambda \tau_1}}
&= (1-\e^{-\lambda}) \sum_{n \ge 0} \e^{-\lambda n} \Pr{\tau_1 > n}
\nonumber
\\
&\enskip\mathop{\sim}^{}_{\lambda \downarrow 0}\enskip
C \cdot \Gamma(1/\alpha) \cdot (1-\e^{-\lambda})^{\frac{\alpha-1}{\alpha}} L^\ast\left((1-\e^{-\lambda})^{-1}\right)
\nonumber
\\
&\enskip\mathop{\sim}^{}_{\lambda \downarrow 0}\enskip
C_\alpha \cdot \lambda^{\frac{\alpha-1}{\alpha}} \ell(\lambda^{-1}),
\end{align*}
for some constant $C_\alpha > 0$, where we recall that $\ell$ is a slowly varying function at infinity such that $B_n = n^{1/\alpha} \ell(n)
$, so, taking $\lambda = 1/N$ with $N \in \N$, we obtain
\begin{equation}\label{eq:Tauberien}
1-\Es{\e^{-\tau_1/N}}
\enskip\mathop{\sim}^{}_{N \to \infty}\enskip
C_\alpha \cdot N^{-1} \cdot B_N.
\end{equation}

\subsubsection{{\L}ukasiewicz paths and random walks}
\label{sec:Luka_marche}

Recall that $S = (S(i))_{i \ge 0}$ denotes a random walk started from $0$ with steps $(X_i)_{i \ge 1}$ given by i.i.d. random variables with law $\P(X_1 = k) = \mu(k+1)$ for every $k \ge -1$. Let $(X_n(i))_{1 \le i \le n+1}$ have the law of $(X_i)_{1 \le i \le n+1}$ conditioned to satisfy $X_1 + \dots + X_{n+2} = -1$ and let $S_n = (S_n(i))_{0 \le i \le n+2}$ be the associated path. For every $1 \le j \le n+2$, put
\[X^{(j)}_n(k) = X_n(k+j \text{ mod } n+2),
\qquad
1 \le k \le n+2.\]
We say that $X^{(j)}_n$ is the $j$-th cyclic shift of $X_n$. Obviously, for every $1 \le j \le n+2$, we have $X^{(j)}_{n,1} + \dots + X^{(j)}_{n,n+2} = -1$, but it turns out there is a unique $j$ such that $X^{(j)}_{n,1} + \dots + X^{(j)}_{n,k} \ge 0$ for every $1 \le k \le n$. This index is the least time at which the path $S_n$ achieves its minimum overall value:
\begin{equation}\label{eq:argim_pont_echangeable}
j = \inf\left\{1 \le k \le n+2 : S_n(k) = \inf_{1 \le i \le n+2} S_n(i)\right\}.
\end{equation}
Moreover, it is a standard fact that this time $j$ has the uniform distribution on $\{1, \dots, n+2\}$ and furthermore $X^{\ast}_n = X^{(j)}_n$ has the same law as the increments of the {\L}ukasiewicz path $W_n$ of the tree $T_n$ and it is independent of $j$. See e.g.~\cite[Chapter~6.1]{Pitman:Combinatorial_stochastic_processes} for details.

We see that cyclicly shifting the path $W_n$ at a fixed time, we obtain a random walk bridge $S_n$. The latter is also invariant in law under time and space \emph{reversal}, so by combining these observations, we obtain the following property: let $(X_n(i))_{1 \le i \le n+2}$ be the increments of $S_n$ and for a given $1 \le i \le n+2$, let $\widehat{X}_n^{(i)}(k) = X_n(i+1-k)$ for $1 \le k \le i$ and $\widehat{X}_n^{(i)}(k) = X_n(n+i+2-k)$ for $i+1 \le k \le n+2$; let $\widehat{S}^{(i)}_n$ be the associated path started from $0$, then it has the same distribution as $S_n$.

Let us finally note that the bridge conditioning is not important: an argument based on the Markov property of $S$ applied at time $\lceil n/2\rceil$ and the \emph{local limit theorem} shows that there exists a constant $C > 0$ such that for every event $A_n$ depending only on the first $\lceil n/2\rceil$ steps of the path, we have
\[\Prc{A_n}{S(n) = -1} \le C\cdot \Pr{A_n},\]
see e.g.~\cite{Kortchemski:Sub_exponential_tail_bounds_for_conditioned_stable_Bienayme_Galton_Watson_trees}, near the end of the proof of Theorem~9 there.

\subsubsection{The height process as local times}
\label{sec:hauteur_records}

Let us list the vertices of $T_n$ in lexicographical order as $\varnothing = u_0 < u_1 < \dots < u_n$. It is well-known that the processes $H_n$ and $W_n$ are related as follows (see e.g. Le Gall \& Le Jan~\cite{Le_Gall-Le_Jan:Branching_processes_in_Levy_processes_the_exploration_process}): for every $0 \le j \le n$,
\[H_n(j) = \#\left\{k \in \{0, \dots, j-1\} : W_n(k) \le \inf_{[k+1, j]} W_n\right\}.\]
Indeed, for $k < j$, we have $W_n(k) \le \inf_{[k+1, j]} W_n$ if and only if $u_k$ is an ancestor of $u_j$; moreover, the inequality is an equality if and only if the last child of $u_k$ is also an ancestor of $u_j$. Fix $i < j$ and suppose that $u_i$ is not an ancestor of $u_j$ (this case is treated similarly); denote by $\overline{ij} < i$ the index of the last common ancestor of $u_i$ and $u_j$, and $j' \in (i, j]$ the index of the child of $u_{\overline{ij}}$ which is an ancestor of $u_j$. It follows from the preceding identity that the quantity $W_n(i) - \min_{i \le k \le j} W_n(k)$ counts the number of vertices branching-off of the ancestral line $\llbracket u_{\overline{ij}}, u_i\llbracket$ which lie between $u_i$ and $u_j$, i.e. all the vertices visited between time $i$ and $j$ whose parent belongs to $\llbracket u_{\overline{ij}}, u_i\llbracket$. Indeed, started from $i$, the path $W_n$ will take only values larger than or equal to $W_n(i)$ until it visits the last ancestor of $u_i$, in which case it takes value exactly $W_n(i)$. Then $W_n$ will decrease by one exactly at every time it visits a vertex whose parent belongs to $\llbracket u_{\overline{ij}}, u_i\llbracket$, until the last one which is $u_{j'}$. We conclude that
\[W_n(j') = \inf_{i \le k \le j} W_n(k),
\qquad\text{and}\qquad
H_n(j') = \inf_{i \le k \le j} H_n(k).\]
It follows that the length of the path $\llbracket u_{j'}, u_j\rrbracket$ is
\begin{align*}
H_n(j) - H_n(j') &= \#\left\{k \in \{j', \dots, j\} : W_n(k) = \min_{k \le l \le j} W_n(l)\right\}
\\
&= \#\left\{k \in \{i, \dots, j\} : W_n(k) = \min_{k \le l \le j} W_n(l)\right\}.
\end{align*}

We can now prove Lemma~\ref{lem:Holder_hauteur_GW_stables} appealing to the preceding subsections.

\subsection{Proof of Lemma~\ref{lem:Holder_hauteur_GW_stables}}
\label{sec:tension_holder_hauteur}

Fix $\gamma \in (0, (\alpha-1)/\alpha)$. We claim that there exists a sequence of events $(E_n)_{n \ge 1}$ whose probability tends to $1$ such that the following holds. There exists $c_1, c_2 > 0$ such that for every $n$ large enough, every $0 \le s \le t \le 1$, and every $x \ge 0$, we have
\begin{equation}\label{eq:branche_droite}
\Pr{|H_n(nt) - \inf_{r \in [s,t]} H_n(nr)| \ge x \frac{n}{B_n} |t-s|^\gamma} \le c_1 \e^{- c_2 x},
\end{equation}
and
\begin{equation}\label{eq:branche_gauche}
\Prc{|H_n(ns) - \inf_{r \in [s,t]} H_n(nr)| \ge x \frac{n}{B_n} |t-s|^\gamma}{E_n} \le c_1 \e^{- c_2 x}.
\end{equation}
This shows that under the conditional probability $\P(\,\cdot\mid E_n)$, the moments of $\frac{B_n}{n} \frac{|H_n(nt) - H_n(ns)|}{|t-s|^\gamma}$ are bounded uniformly in $n$ and $s,t \in [0,1]$, so Lemma~\ref{lem:Holder_hauteur_GW_stables}, first under $\P(\,\cdot\mid E_n)$, but then under the unconditioned law, follows from Kolmogorov's tightness criterion. Let us start by considering the right branch and prove~\eqref{eq:branche_droite}. Note that we may, and shall, restrict to times $0 \le s < t \le 1$ such that $t-s \le 1/2$ and both $ns$ and $nt$ are integers.

\begin{proof}[Proof of~\eqref{eq:branche_droite}]
According to the discussion closing Section~\ref{sec:hauteur_records}, our claim~\eqref{eq:branche_droite} reads as follows: for every pair $s < t$,
\begin{equation}\label{eq:branche_droite_marche_records}
\Pr{\#\left\{k \in \{ns, \dots, nt\} : W_n(k) = \min_{k \le l \le nt} W_n(l)\right\} \ge x \frac{n}{B_n} |t-s|^\gamma} \le c_1 \e^{- c_2 x}.
\end{equation}
Let us first consider the random walk bridge $S_n$ and prove that~\eqref{eq:branche_droite_marche_records} holds when $W_n$ is replaced by $S_n$. Note that we may, and shall, restrict to times such that $t-s \le 1/2$ and both $ns$ and $nt$ are integers. By shifting the path at time $nt$ and then taking its time and space reversal, this cardinal of the set in this probability has the same law as the number of weak records of $S_n$ up to time $n|t-s|$. Let $(\tau_n(i))_{i \ge 0}$ be the weak record times of $S_n$, we therefore aim at bounding the probability
\[\Pr{\tau_n\left(\left\lfloor x \frac{n}{B_n} |t-s|^\gamma\right\rfloor\right) \le n |t-s|}.\]
Since $n|t-s| \le n/2$, as explained in Section~\ref{sec:Luka_marche}, this probability is bounded by some constant $C > 0$ times
\[\Pr{\tau\left(\left\lfloor x \frac{n}{B_n} |t-s|^\gamma\right\rfloor\right) \le n |t-s|},\]
where $(\tau(i))_{i \ge 0}$ are the weak record times of the unconditioned walk $S$. Recall that $(\tau(i+1) - \tau(i))_{i \ge 0}$ are i.d.d. and let $\tau = \tau(1)$. The exponential Markov inequality shows that the preceding probability is bounded by
\[\e \cdot \Es{\exp\left(-\frac{\tau(\lfloor x \frac{n}{B_n} |t-s|^\gamma\rfloor)}{n |t-s|}\right)}
= \exp\left(1 + \left\lfloor x \frac{n}{B_n} |t-s|^\gamma\right\rfloor \ln\left(1 - \left(1 - \Es{\exp\left(-\frac{\tau}{n |t-s|}\right)}\right)\right)\right).\]
From~\eqref{eq:Tauberien}, we get that
\begin{align*}
\left\lfloor x \frac{n}{B_n} |t-s|^\gamma\right\rfloor \ln\left(1 - \left(1 - \Es{\exp\left(\frac{\tau}{n |t-s|}\right)}\right)\right)
&= \left\lfloor x \frac{n}{B_n} |t-s|^\gamma\right\rfloor \ln\left(1 - C_\alpha \frac{B_{n |t-s|}}{n |t-s|} (1+o(1))\right)
\\
&= - x \frac{n}{B_n} |t-s|^\gamma C_\alpha \frac{B_{n |t-s|}}{n |t-s|} (1+o(1))
\\
&=- C_\alpha x \frac{(n |t-s|)^{-1/\alpha} B_{n |t-s|}}{n^{-1/\alpha} B_n} |t-s|^{\gamma-1+\frac{1}{\alpha}} (1+o(1)),
\end{align*}
where the $o(1)$ does not depend on $s$ and $t$. Let $\varepsilon = 1 - \frac{1}{\alpha} - \gamma > 0$, since the sequence $(n^{-1/\alpha} B_n)_{n \ge 1}$ is slowly varying, the so-called Potter bound (see e.g.\cite[Lemma~4.2]{Bjornberg-Stefansson:Random_walk_on_random_infinite_looptrees} or \cite[Equation~9]{Kortchemski:Sub_exponential_tail_bounds_for_conditioned_stable_Bienayme_Galton_Watson_trees}) asserts that there exists a constant $c$, depending on $\varepsilon$ (and so on $\gamma$), such that for every $n$ large enough,
\[\frac{(n |t-s|)^{-1/\alpha} B_{n |t-s|}}{n^{-1/\alpha} B_n} \ge c \cdot |t-s|^\varepsilon.\]
We conclude that 
\[\Pr{\#\left\{k \in \{ns, \dots, nt\} : S_n(k) = \min_{k \le l \le nt} S_n(l)\right\} \ge x \frac{n}{B_n} |t-s|^\gamma}
\le C \cdot \exp\left(1- C_\alpha c x (1+o(1))\right),\]
for every pair $s < t$, which indeed corresponds to~\eqref{eq:branche_droite_marche_records} with $S_n$ instead of $W_n$.

\begin{figure}[!ht] \centering
\includegraphics[page=1, width=.45\linewidth]{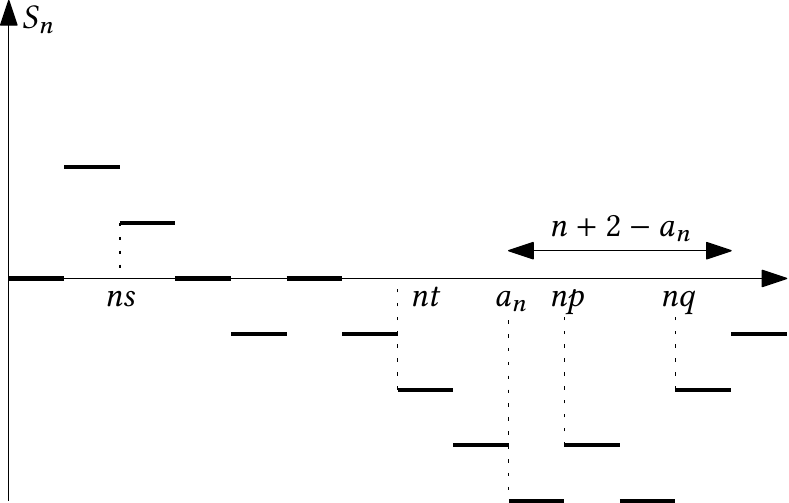}
\qquad
\includegraphics[page=2, width=.45\linewidth]{shift_excursion}
\caption{A bridge $S_n$ and its shifted excursion $W_n$; the times $s, t$ fall into the first case, whereas $p,q$ fall into the second case and $s,p$ into the third case.}
\label{fig:shift_excursion}
\end{figure}

We next prove~\eqref{eq:branche_droite_marche_records} by relating $W_n$ and $S_n$, as depicted in Figure~\ref{fig:shift_excursion}. Recall that these paths have length $n+2$.
Let us denote by $a_n$ the time $j$ in~\eqref{eq:argim_pont_echangeable} so the path $S_n$ shifted at time $a_n$ has the law of $W_n$. Fix two times $s < t$ such that $ns$ and $nt$ are integers and denote by $s'$ and $t'$ their respective image after the shift. We distinguish three cases:
\begin{enumerate}
\item Either $ns < nt \le a_n$, in which case $ns' = ns + (n+2-a_n) < nt + (n+2-a_n) = nt'$;
\item Either $a_n \le ns < nt$, in which case $ns' = ns - a_n < nt - a_n = nt'$; 
\item Or $ns < a_n < nt$, in which case $nt' = nt - a_n < ns + (n+2-a_n) = ns'$.
\end{enumerate}
In the first two cases, the parts of the two paths $(S_n(k))_{ns \le k \le nt}$ and $(W_n(k))_{ns' \le k \le nt'}$ are identical, and $t'-s' = t-s$ so, according to~\eqref{eq:branche_droite}, we have
\[\Pr{\#\left\{k \in \{ns', \dots, nt'\} : W_n(k) = \min_{k \le l \le nt'} W_n(l)\right\} \ge x \frac{n}{B_n} |t'-s'|^\gamma} \le c_1 \e^{- c_2 x}.\]
In the third case above, we have to be a little more careful; by cutting $W_n$ at time $n+2-a_n$ (which corresponds to $n+2$ for $S_n$), we observe that
\begin{align*}
&\#\left\{k \in \{nt', \dots, ns'\} : W_n(k) = \min_{k \le l \le ns'} W_n(l)\right\}
\\
&\le \#\left\{k \in \{nt', \dots, n+2-a_n\} : W_n(k) = \min_{k \le l \le n+2-a_n} W_n(l)\right\}
\\
&\qquad+ \#\left\{k \in \{n+2-a_n, \dots, ns'\} : W_n(k) = \min_{k \le l \le ns'} W_n(l)\right\}
\\
&= \#\left\{k \in \{nt, \dots, n+2\} : S_n(k) = \min_{k \le l \le n+2} S_n(l)\right\}
+ \#\left\{k \in \{0, \dots, ns\} : S_n(k) = \min_{k \le l \le ns} S_n(l)\right\}.
\end{align*}
A union bound then yields
\begin{align*}
&\Pr{\#\left\{k \in \{nt', \dots, ns'\} : W_n(k) = \min_{k \le l \le nt'} W_n(l)\right\} \ge x \frac{n}{B_n} |t'-s'|^\gamma}
\\
&\le\Pr{\#\left\{k \in \{nt, \dots, n+2\} : S_n(k) = \min_{k \le l \le n+2} S_n(l)\right\} \ge \frac{x}{2} \frac{n}{B_n} |t'-s'|^\gamma}
\\
&\qquad+ \Pr{\#\left\{k \in \{0, \dots, ns\} : S_n(k) = \min_{k \le l \le ns} S_n(l)\right\} \ge \frac{x}{2} \frac{n}{B_n} |t'-s'|^\gamma}
\\
&\le\Pr{\#\left\{k \in \{nt, \dots, n+2\} : S_n(k) = \min_{k \le l \le n+2} S_n(l)\right\} \ge \frac{x}{2} \frac{n}{B_n} |1-t|^\gamma}
\\
&\qquad+ \Pr{\#\left\{k \in \{0, \dots, ns\} : S_n(k) = \min_{k \le l \le ns} S_n(l)\right\} \ge \frac{x}{2} \frac{n}{B_n} |s|^\gamma}
\\
&\le c_1 \e^{- c_2 x},
\end{align*}
which concludes the proof of~\eqref{eq:branche_droite_marche_records}.
\end{proof}

The idea to control the left branch $|H_n(ns) - \inf_{r \in [s,t]} H_n(nr)|$ is to consider the `mirror tree' obtained from $T_n$ by flipping the order of the children of every vertex. There is one subtlety though, let us explain how to make this argument rigorous, with the help of Figure~\ref{fig:miroir}. Put $i=ns$ and $j = nt$. Let us denote by $\widetilde{T}_n$ the image of $T_n$ by the following two operations: first exchange the subtrees of the progeny of the $i$-th and the $j$-th vertices of $T_n$ and then take the mirror image of the whole tree, the resulting tree is $\widetilde{T}_n$. Observe that $T_n$ and $\widetilde{T}_n$ have the same law. Let $\tilde{i} > \tilde{j}$ be the indices such that the $\tilde{i}$-th and the $\tilde{j}$-th vertices of $\widetilde{T}_n$ correspond to the $i$-th and the $j$-th vertices of $T_n$ respectively. Then between times $i$ and $j$, in $T_n$, the {\L}ukasiewicz path $W_n$ visits all the progeny of the $i$-th vertex, then all the vertices that lie strictly between the two ancestral lines between the $i$-th and $j$-th vertices and their last common ancestor, and also all the vertices on this ancestral line leading to $j$. Similarly, between times $\tilde{j}$ and $\tilde{i}$, in $\widetilde{T}_n$, the {\L}ukasiewicz path $\widetilde{W}_n$ visits all the progeny of the $\tilde{j}$-th vertex, which is the same as that of the $i$-th vertex of $T_n$, then all the vertices that lie strictly between the two ancestral lines between the $\tilde{j}$-th and $\tilde{i}$-th vertices and their last common ancestor, which again are the same as in $T_n$, and also all the vertices on this ancestral line leading to $\tilde{i}$. So the two {\L}ukasiewicz paths visit the same vertices, except that $W_n$ visits the ancestors of the $j$-th vertex of $T_n$ and not those of its $i$-th vertex, whereas $\widetilde{W}_n$ visits the ancestors of the $i$-th vertex of $T_n$ and not those of its $j$-th vertex. In principle, the lexicographical distance $|\tilde{j} - \tilde{i}|$ may thus be much larger than $|i-j|$ so we cannot directly apply the bound~\eqref{eq:branche_droite} to $\widetilde{W}_n$ (note that it could also be smaller, but this is not an issue for us, it actually helps). The following lemma shows that this difference is indeed not important.

\begin{figure}[!ht] \centering
\includegraphics[page=1, width=.35\linewidth]{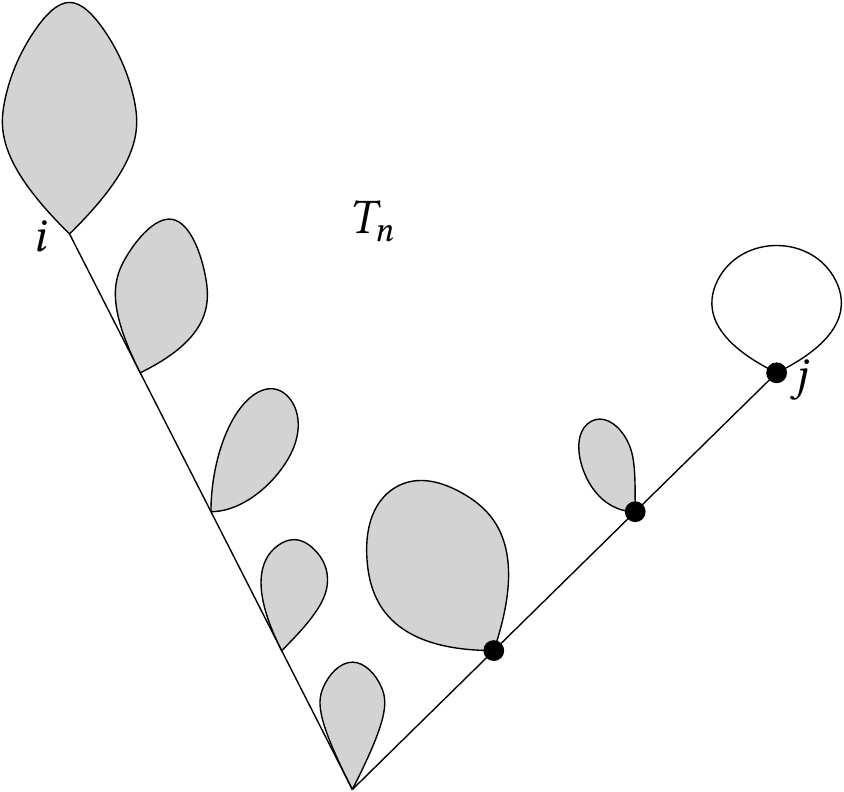}
\qquad
\includegraphics[page=2, width=.35\linewidth]{miroir}
\caption{On the left: a portion of the tree $T_n$ and two vertices $u_i$ and $u_j$; on the right: the `mirror' images $\widetilde{T}_n$, $\tilde{i}$ and $\tilde{j}$. The vertices visited by $W_n$ (resp. $\widetilde{W}_n$) between time $i$ and $j$ (resp. $\tilde{j}$ and $\tilde{i}$) are those black dots on the right branch as well as all the vertices strictly inside the grey trees.}
\label{fig:miroir}
\end{figure}

Recall that for a vertex $v$ of a tree $T$ different from its root, we denote by $pr(v)$ its parent and by $k_{pr(v)}$ the number of children of the latter; denote further by $\chi_v$ the relative position of $v$ among the children of $pr(v)$: formally, the index $\chi_v \in \{1, \dots, k_{pr(v)}\}$ satisfies $v=pr(v)\chi_v$.

\begin{lem}\label{lem:longueur_branches}
Let $C = \frac{10}{\mu(0)^2}$, then the probability of the event
\[\left\{\frac{\#\{w \in \mathopen{\rrbracket} u, v \rrbracket : \chi_w = 1\}}{\# \mathopen{\rrbracket} u, v \rrbracket} \le 1-\frac{\mu(0)}{2} \text{ for every } u, v \in T_n \text{ such that } u \in \llbracket \varnothing, v \llbracket \text{ and } \# \mathopen{\rrbracket} u, v \rrbracket > C \ln n\right\}\]
tends to $1$ as $n \to \infty$.
\end{lem}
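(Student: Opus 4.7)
The plan is: reformulate the condition in terms of leaves; apply the spinal (many-to-one) decomposition of the BGW tree along with a Chernoff bound; union-bound over all possible endpoints $u$; and absorb the size-conditioning via the local limit theorem for $|T|$.

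\textbf{Reformulation.} For any $k \ge 1$, the lexicographic structure of $T_n$ forces $\chi_{u_k} = 1$ if and only if $u_{k-1}$ is not a leaf (equivalently, $W_n(k) \ge W_n(k-1)$): the first child of a non-leaf is always its lexicographic successor, whereas the lexicographic successor of a leaf cannot be its descendant. Writing $u_{a_0} = u, u_{a_1}, \ldots, u_{a_\ell} = v$ for the successive ancestors of $v$ lying above $u$,
\[
\#\{w \in \mathopen{\rrbracket} u, v\rrbracket : \chi_w = 1\} = \#\{s \in \{1, \ldots, \ell\} : u_{a_s - 1} \text{ is not a leaf of } T_n\}.
\]

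\textbf{Spinal concentration.} Consider the unconditional BGW tree $T$ with law $\mu$. By the classical many-to-one formula, for any nonnegative functional $G$ of the ancestry of depth $h$,
\[
\Es{\sum_{v \in T,\, |v| = h} G(\chi_{v_1}, \ldots, \chi_{v_h})} = \Es{G(\Xi_1, \ldots, \Xi_h)},
\]
where the spine out-degrees $(K_r)_{r=0}^{h-1}$ are i.i.d.\ with size-biased law $k \mu(k)$ on $k \ge 1$, and conditionally on these the positions $\Xi_r$ are independent and uniform in $\{1, \ldots, K_{r-1}\}$. The Bernoulli variables $\ind{\Xi_r = 1}$ are therefore i.i.d.\ of common mean $\sum_{k \ge 1} (1/k) \cdot k\mu(k) = 1 - \mu(0)$, and Hoeffding's inequality gives, for any $0 \le r_1 < h$,
\[
\Pr{\sum_{r = r_1 + 1}^h \ind{\Xi_r = 1} > \big(1 - \tfrac{\mu(0)}{2}\big)(h - r_1)} \le \exp\Big( -\tfrac{\mu(0)^2}{2}(h - r_1) \Big),
\]
which is $\le n^{-5}$ as soon as $h - r_1 > C \ln n$ with $C = 10/\mu(0)^2$.

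\textbf{Union bound and transfer.} Call a vertex $v$ of $T$ \emph{bad} if the ancestral line $\llbracket \varnothing, v \rrbracket$ contains some suffix of length $> C \ln n$ whose fraction of first-children exceeds $1 - \mu(0)/2$. Every vertex of a tree of size at most $n + 1$ has depth at most $n$; summing the previous estimate over the at most $h$ relevant choices of $r_1$ at each depth $h \le n$, and combining with the many-to-one formula, yields
\[
\Es{\#\{v \in T : |v| \le n, \; v \text{ is bad}\}} \le \sum_{h = 1}^n h \cdot n^{-5} \le n^{-3}.
\]
By Markov's inequality, the probability that at least one such $v$ exists, intersected with $\{|T| \le n + 1\}$, is also at most $n^{-3}$. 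Since $T_n$ is distributed as $T$ conditionally on $|T| = n + 1$, and since the local limit theorem for critical BGW tree sizes in the domain of attraction of an $\alpha$-stable law gives $\Pr{|T| = n + 1} \sim c/(n B_n)$, the probability of the complement of the event in the lemma under the law of $T_n$ is bounded by
\[
\frac{n^{-3}}{c/(n B_n)} = O\Big(\frac{B_n}{n^2}\Big) \cv 0,
\]
since $B_n / n^2 = n^{1/\alpha - 2} \ell(n) \to 0$ for any $\alpha > 1$.

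\textbf{Main obstacle.} The key technical ingredient is the many-to-one formula, which collapses the union bound over the random and potentially numerous set of vertices $v \in T$ to a single spinal computation. Once it is in hand, both the Chernoff concentration along the size-biased spine and the local-limit absorption of the size-conditioning are straightforward.
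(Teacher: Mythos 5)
Your proof is correct and follows essentially the same route as the paper: the many-to-one/spinal decomposition, a Chernoff bound along the size-biased spine (where being a first child has probability $1-\mu(0)$), and the local limit theorem $\Pr{\#T = n+1} \sim c/(nB_n)$ to convert an unconditional expectation bound into a conditional probability bound. The only superficial difference is your opening reformulation of $\chi_{u_k}=1$ in terms of whether $u_{k-1}$ is a leaf, which is a true observation but is never invoked in the remainder of your argument.
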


We can now finish the proof of Lemma~\ref{lem:Holder_hauteur_GW_stables}.

\begin{proof}[Proof of~\eqref{eq:branche_gauche}]
From the preceding lemma, we deduce that there exists some $p \in (0,1)$ such that with high probability, on all ancestral paths in $T_n$ of length at least logarithmic, there is a proportion at least $p$ of individuals which are not the first child of their parent; symmetrically, there is the same proportion of individuals which are not the last child of their parent. Consequently, the length of such a path, multiplied by $p$, is bounded below by the number of vertices whose parent belongs to this path, and which themselves lie strictly to its right. With the notation of the discussion preceding the lemma, on the event described in this lemma, the lexicographical distance in $\widetilde{T}_n$ between the images of the $i$-th and $j$-th vertex of $T_n$ is
\begin{align*}
|\tilde{j} - \tilde{i}| 
&= |i-j| - |H_n(j) - \inf_{i \le k \le j} H_n(k)| + |H_n(i) - \inf_{i \le k \le j} H_n(k)|
\\
&\le |i-j| + p^{-1} |i-j|,
\end{align*}
where the second (very rough) bound holds only if $|H_n(i) - \inf_{i \le k \le j} H_n(k)| > C \ln n$, with $C$ as in Lemma~\ref{lem:longueur_branches}. Note that~\eqref{eq:branche_gauche} is trivial otherwise since $|t-s| \ge \frac{1}{n}$ as we restricted to integer times, so $x \frac{n}{B_n} |t-s|^\gamma \ge x \frac{n^{1-\gamma}}{B_n}$ which tends to infinity like a power of $n$. We then conclude from the bound~\eqref{eq:branche_droite} applied to the `mirror' {\L}ukasiewicz path $\widetilde{W}_n$.
\end{proof}

It remains to prove Lemma~\ref{lem:longueur_branches}. A similar statement was proved in~\cite[Corollary~3]{Marzouk:Scaling_limits_of_random_bipartite_planar_maps_with_a_prescribed_degree_sequence} in the context of trees `with a prescribed degree sequence'. The argument may be extended to our present case but we chose to modify it in order to directly use the existing references on Bienaymé--Galton--Watson trees.

\begin{proof}[Proof of Lemma~\ref{lem:longueur_branches}]
Fix $\varepsilon > 0$ and let $T$ be an unconditioned Bienaymé--Galton--Watson tree with offspring distribution $\mu$. Define the set
\[A_T = \left\{u, v \in T: u \in \llbracket \varnothing, v \llbracket \text{ and } \# \mathopen{\rrbracket} u, v \rrbracket > C \ln n \text{ and } \frac{\#\{w \in \mathopen{\rrbracket} u, v \rrbracket : \chi_w = 1\}}{\# \mathopen{\rrbracket} u, v \rrbracket} > 1-\frac{\mu(0)}{2}\right\}.\]
Note that the maximal height of a tree cannot exceed its total size. Then by sampling $v$ uniformly at random in $T$, we obtain
\begin{align}\label{eq:moyenne_sommets_hauteur_h}
\Pr{\exists (u, v) \in A_T \text{ and } \#T = n+1}
&= \frac{1}{n+1} \Es{\sum_{h =1}^{n+1} \sum_{\substack{v \in T \\ |v| = h}} \ind{\exists u \in T \text{ such that } (u,v) \in A_T} \ind{\#T = n+1}}
\nonumber
\\
&\le \frac{1}{n+1} \sum_{h =1}^{n+1} \Es{\sum_{\substack{v \in T \\ |v| = h}} \ind{\exists u \in T \text{ such that } (u,v) \in A_T}}.
\end{align}
We then use a spinal decomposition due to Duquesne~\cite[Equation~24]{Duquesne:An_elementary_proof_of_Hawkes_s_conjecture_on_Galton_Watson_trees} which results in an absolute continuity relation between the tree $T$ and the tree $T_\infty$ `conditioned to survive', which is the infinite tree which arise as the \emph{local limit} of $T_n$. It was introduced by Kesten~\cite{Kesten:Subdiffusive_behavior_of_random_walk_on_a_random_cluster} and the most general results on such convergences are due to Abraham \& Delmas~\cite{Abraham-Delmas:Local_limits_of_conditioned_Galton_Watson_trees_the_infinite_spine_case}. The tree $T_\infty$ contains a unique infinite simple path called the \emph{spine}, starting from the root, and the vertices which belong to this spine reproduce according to the \emph{size-biased} law $(k \mu(k))_{k \ge 1}$, whereas the other vertices reproduce according to $\mu$, and all the vertices reproduce independently. For a tree $\tau$ and a vertex $v \in \tau$, let $\theta_v(\tau)$ be the subtree consisting of $v$ and all its progeny, and let $\mathsf{Cut}_v(\tau) = \{v\} \cup (\tau \setminus \theta_v(\tau))$ be its complement (note that $v$ belongs to the two parts). Then for every non-negative measurable functions $G_1, G_2$, for every $h \ge 0$, we have
\[\Es{\sum_{\substack{v \in T \\ |v| = h}} G_1(\mathsf{Cut}_v(T), v) \cdot G_2(\theta_v(T))}
= \Es{G_1(\mathsf{Cut}_{v_h^\ast}(T_\infty), v_h^\ast)} \cdot \Es{G_2(T)},\]
where $v_h^\ast$ is the only vertex on the spine of $T_\infty$ at height $h$. Then the expectation in~\eqref{eq:moyenne_sommets_hauteur_h} equals
\begin{align*}
&\Pr{\exists u \in T_\infty: u \in \llbracket \varnothing, v_h^\ast \llbracket \text{ and } \# \mathopen{\rrbracket} u, v_h^\ast \rrbracket > C \ln n \text{ and } \frac{\#\{w \in \mathopen{\rrbracket} u, v_h^\ast \rrbracket : \chi_w = 1\}}{\# \mathopen{\rrbracket} u, v_h^\ast \rrbracket} > 1-\frac{\mu(0)}{2}}
\\
&\le \sum_{k = C \ln n}^{h-1} \Pr{\frac{\#\{w \in \mathopen{\rrbracket} v_{h-k}^\ast, v_h^\ast \rrbracket : \chi_w = 1\}}{k} > 1-\frac{\mu(0)}{2}}.
\end{align*}
Now recall that on the spine, the vertices reproduce according to the size-biased law $(i \mu(i))_{i \ge 1}$, and furthermore, conditional on the number of children of its parent, the position of a vertex amongst its sibling is uniformly chosen, this means that for every vertex $w$ on the spine, the probability $\P(\chi_w = 1)$ equals $\sum_{i \ge 1} (i \mu(i))/i = 1-\mu(0)$ and these events are independent. Therefore, if $\mathsf{Bin}(N,p)$ denotes a random variable with the binomial law with parameters $N$ and $p$, then the preceding probability is bounded by
\[\sum_{k = C \ln n}^{h-1} \Pr{k^{-1} \mathsf{Bin}(k,1-\mu(0)) > 1-\frac{\mu(0)}{2}}
\le \sum_{k = C \ln n}^{h-1} \e^{-k \mu(0)^2/2},\]
where we have used the celebrated Chernoff bound. Putting things together, we obtain the bound
\begin{align*}
\Prc{\exists (u, v) \in A_T}{\#T = n+1}
&\le \frac{1}{(n+1) \P(\#T = n+1)} \sum_{h =1}^{n+1} \sum_{k = C \ln n}^{h-1} \e^{-k \mu(0)^2/2}
\\
&\le \frac{1}{(n+1) \P(\#T = n+1)} \sum_{h =1}^{n+1} h \cdot \e^{-C \ln n \mu(0)^2/2}.
\end{align*}
It is well-known that $n B_n \P(\#T = n+1) \to p_1(0)$ as $n \to \infty$, where $p_1$ is the density of the stable random variable $X^{(\alpha)}$ from the introduction; this follows e.g. from the fact that the event $\P(\#T = n+1)$ is the probability that the random walk $S$ first hits $-1$ at time $n+2$, which equals by cyclic shift $(n+1)^{-1}$ times the probability that $S_{n+2} = -1$ and the asymptotic behaviour of this probability is dictated by the local limit theorem, see e.g.~\cite[Lemma~1]{Kortchemski:A_simple_proof_of_Duquesne_s_theorem_on_contour_processes_of_conditioned_Galton_Watson_trees}. We conclude that for $n$ large enough
\begin{align*}
\Prc{\exists (u, v) \in A_T}{\#T = n+1}
&\le \frac{n^2 B_n}{2p_1(0)} \e^{-C \ln n \mu(0)^2/2} (1+o(1))
\\
&\le n^3 \e^{-C \ln n \mu(0)^2/2},
\end{align*}
which converges to $0$ from our choice of $C$.
\end{proof}

\section{Convergence of snakes}
\label{sec:serpents}

We prove in this section the results presented in the introduction when we add to $T_n$ spatial positions given by i.i.d. increments with law $Y$. Recall that we concentrate only on the joint convergence of $H_n$ and $\Hsp_n$.

\subsection{Proof of Theorem~\ref{thm:convergence_serpent_iid} for centred snakes}
Let us first focus on the case $\E[Y]=0$; we aim at showing the convergence in distribution in $\mathscr{C}([0,1], \R^2)$
\[\left(\frac{B_n}{n} H_n(n t), \left(\frac{B_n}{n \Sigma^2}\right)^{1/2} \Hsp_n(nt)\right)_{t \in [0,1]} \cvloi (\Hexc_t, \Snake_t)_{t \in [0,1]}.\]
where $\Sigma^2 \coloneqq \E[Y^2] \in (0, \infty)$; this convergence in the sense of finite-dimensional marginals follows easily from~\eqref{eq:cv_GW_Duquesne} appealing e.g. to Skorohod's representation theorem and Donsker's invariance principle applied to finitely many branches. We thus only focus on the tightness of the rescaled process $(B_n/n)^{1/2} \Hsp_n(n\cdot)$. The idea is to apply Kolmogorov's criterion but our assumption does not give us sufficiently large moments. We therefore adapt the argument from~\cite{Janson-Marckert:Convergence_of_discrete_snakes} and treat separately the large and small values of $Y$'s: the large ones are too rare to contribute much and the small ones now have sufficiently large moments. The proof takes five steps.

\subsubsection{Necessity of the assumption}\label{sec:thm_principal_step1}
Suppose first that the assumption $\P(|Y| \ge (n/B_n)^{1/2}) = o(n^{-1})$ does not hold. Then there exists $\delta > 0$ such that for infinitely many indices $n \in \N$, we have $\P(|Y| \ge (n/B_n)^{1/2}) \ge \delta n^{-1}$. Let us implicitly restrict ourselves to such indices; let $(Y_i)_{i \ge 1}$ be i.d.d. copies of $Y$, independent of $T_n$, then the conditional probability given $T_n$ that there exists an internal vertex $u$ such that its first child satisfies $|Y_{u1}| \ge (n/B_n)^{1/2}$ equals
\[\Prc{\bigcup_{i = 1}^{n+1-\lambda(T_n)} \{|Y_i| \ge (n/B_n)^{1/2}\}}{T_n}
= 1 - \Prc{\bigcap_{i = 1}^{n+1-\lambda(T_n)} \{|Y_i| < (n/B_n)^{1/2}\}}{T_n}
\ge 1 - \left(1-\frac{\delta}{n}\right)^{n+1-\lambda(T_n)},\]
where $\lambda(T_n)$ denotes the number of leaves of the tree. Since $\limsup_{n \to \infty} \lambda(T_n)/n < 1$ with high probability, and indeed $\lambda(T_n)/n$ converges to $\mu(0)$, see e.g.~\cite[Lemma~2.5]{Kortchemski:Invariance_principles_for_Galton_Watson_trees_conditioned_on_the_number_of_leaves}, the right-most term is bounded away from $0$ uniformly in $n$. We conclude that with a probability bounded away from $0$, for infinitely many indices $n \in \N$, there exists $0 \le i < n$ such that $(n/B_n)^{1/2} |\Hsp_n(i+1) - \Hsp_n(i)| \ge 1$ so the sequence of continuous processes $((n/B_n)^{1/2} \Hsp_n(n\cdot))_{n \ge 1}$ cannot be tight.

\subsubsection{A cut-off argument}\label{sec:thm_principal_step2}
We assume for the rest of the proof that $\P(|Y| \ge (n/B_n)^{1/2}) = o(n^{-1})$. Recall that for every $\delta > 0$, we have $n^{\frac{1}{\alpha}-\delta} \ll B_n \ll n^{\frac{1}{\alpha}+\delta}$ so this assumption implies $\P(|Y| \ge y) = o(y^{-\frac{2\alpha}{\alpha(1+\delta)-1}})$. Set $b_n = (n^2/B_n)^{\frac{\alpha-1}{4\alpha} + \varepsilon}$ for some $\varepsilon > 0$; we shall tune $\varepsilon$ and $\delta$ small. The idea is to take into account separately the large increments. For every vertex $u \in T_n$, let $Y_u' = Y_u \ind{|Y_u| \le b_n}$ and $Y_u'' = Y_u \ind{|Y_u| > b_n}$, define then $\Hsp_n\vphantom{H}'$ and $\Hsp_n\vphantom{H}''$ the spatial processes in which the increments $Y_u$ are replaced by $Y_u'$ and $Y_u''$ respectively, so $\Hsp_n = \Hsp_n\vphantom{H}' + \Hsp_n\vphantom{H}''$.

\subsubsection{Contribution of the large jumps}\label{sec:thm_principal_step3}
Let $E_n$ be the event that $T_n$ contains two vertices, say $u$ and $v$, such that $u$ is an ancestor of $v$ and both $|Y_u| > b_n$ and $|Y_v| > b_n$. Then $\P(E_n \mid T_n) \le \Lambda(T_n) \P(|Y| > b_n)^2$, where $\Lambda(T_n) = \sum_{u \in T_n} |u|$ is called the \emph{total path length} of $T_n$. It is a simple matter to prove the following well-known integral representation: if $C_n$ denotes the contour process of $T_n$, then
\[\Lambda(T_n) = \frac{n}{2} + \frac{1}{2} \int_0^{2n} C_n(t)\d t = \frac{n^2}{B_n} \left(\frac{B_n}{2n} + \int_0^1 \frac{B_n}{n} C_n(2nt)\d t \right).\]
We deduce that if $T_n$ satisfies~\ref{eq:cv_GW_Duquesne}, then we have the convergence in distribution:
\begin{equation}\label{eq:convergence_total_path_length}
\frac{B_n}{n^2} \Lambda(T_n) \cvloi \int_0^1 \Hexc_t \d t.
\end{equation}
We then write for every $K > 0$,
\[\limsup_{n \to \infty} \Pr{E_n}
\le \limsup_{n \to \infty} \Pr{\Lambda(T_n) > K n^2/B_n} + K \limsup_{n \to \infty} \frac{n^2}{B_n} \P(|Y| > b_n)^2.\]
The first term on the right tends $0$ when $K \to \infty$, and as for the second term, from our choice of $b_n$, we have for every $\delta > 0$,
\[\frac{n^2}{B_n} \P(|Y| > b_n)^2
\ll \left(\frac{n^2}{B_n}\right)^{1-(\frac{\alpha-1}{4\alpha} + \varepsilon) (\frac{4\alpha}{\alpha(1+\delta)-1})},\]
and the exponent is negative for $\delta$ sufficiently small. Now on the event $E_n^c$, there is at most one edge on each branch along which the spatial displacement is in absolute value larger than $b_n$, therefore $\max_{0 \le i \le n} |\Hsp_n\vphantom{H}''(i)|$ simply equals $\max_{u \in T_n} |Y_u''|$ and so for every $\delta > 0$,
\begin{align*}
\Pr{\left\{\max_{0 \le t \le 1} |\Hsp_n\vphantom{H}''(2nt)| > \delta (n/B_n)^{1/2}\right\} \cap E_n^c}
&\le \Pr{\max_{u \in T_n} |Y_u| > \delta (n/B_n)^{1/2}}
\\
&\le n \Pr{|Y| > (n/B_n)^{1/2}},
\end{align*}
which converges to $0$ as $n \to \infty$. Thus $(B_n/n)^{1/2} \Hsp_n\vphantom{H}''(n\cdot)$ converges to $0$ so it only remains to prove that $(B_n/n)^{1/2} \Hsp_n\vphantom{H}'(n\cdot)$ is tight.

\subsubsection{Average contribution of small jumps}\label{sec:thm_principal_step4}
The process $(B_n/n)^{1/2} \Hsp_n\vphantom{H}'(n\cdot)$ is simpler to analyse than $(B_n/n)^{1/2} \Hsp_n(n\cdot)$ since all the increments are bounded in absolute value by $(n/B_n)^{-\varepsilon}$. Note nonetheless that $\Hsp_n\vphantom{H}'$ is non centred in general, we next prove that its conditional expectation given $T_n$ is negligible. Let $m_n = \E[Y'] = -\E[Y'']$ and observe that $\E[\Hsp_n\vphantom{H}'(n \cdot) \mid T_n] = m_n H_n(n \cdot)$. Recall that $(B_n/n) H_n(n\cdot)$ converges in distribution (to $\Hexc$); from the tail behaviour of $Y$ we get:
\[|m_n| \le \E[|Y''|]
\le b_n \Pr{|Y| > b_n} + \int_{b_n}^\infty \Pr{|Y| > y} \d y
= O\left(b_n^{1-\frac{2\alpha}{\alpha(1+\delta)-1}}\right).\]
Note that for $\varepsilon$ and $\delta$ sufficiently small, we have
\[\left(\frac{\alpha-1}{4\alpha} + \varepsilon\right) \left(1-\frac{2\alpha}{\alpha(1+\delta)-1}\right)
> - \frac{\alpha+1}{4\alpha},\]
and this bound gets tighter as $\varepsilon$ and $\delta$ get closer to $0$. Then, since
\[\frac{\alpha+1}{4\alpha} < \frac{1}{2} < \frac{\alpha+1}{2\alpha},\]
we conclude that for $\varepsilon$ and $\delta$ sufficiently small,
\[\left(\frac{n}{B_n}\right)^{1/2} \left(b_n^{1-\frac{2\alpha}{\alpha(1+\delta)-1}}\right)
= n^{2 (\frac{\alpha-1}{4\alpha} + \varepsilon)(1-\frac{2\alpha}{\alpha(1+\delta)-1}) + \frac{1}{2}} \cdot B_n^{- (\frac{\alpha-1}{4\alpha} + \varepsilon)(1-\frac{2\alpha}{\alpha(1+\delta)-1}) - \frac{1}{2}}\]
converges to $0$ as $n \to \infty$ since both exponents are negative. Therefore the process $(B_n/n)^{1/2} \E[\Hsp_n\vphantom{H}'(n\cdot)\mid T_n] = (n/B_n)^{1/2} m_n \cdot (B_n/n) H_n(n\cdot)$ converges in probability to $0$ and we focus for the rest of the proof on the centred process $\Hspc_n\vphantom{H}'(n\cdot) = \Hsp_n\vphantom{H}'(n\cdot) - \E[\Hsp_n\vphantom{H}'(n\cdot)\mid T_n]$.

\subsubsection{Re-centred small jumps are tight}\label{sec:thm_principal_step5}
It only remains to prove that $(B_n/n)^{1/2} \Hspc_n\vphantom{H}'(n\cdot)$ is tight. Fix $\gamma \in (0,(\alpha-1)/\alpha)$, let $\eta > 0$ arbitrary and let us fix $C > 0$ such that, according to Lemma~\ref{lem:Holder_hauteur_GW_stables}, for every $n$ large enough,
\[\Pr{\sup_{0 \le s \ne t \le 1} \frac{B_n \cdot |H_n(nt) - H_n(ns)|}{n \cdot |t-s|^\gamma} \le C} \ge 1-\eta.\]
We shall denote by $A_n$ the event in the preceding probability. Our aim is to apply Kolmogorov's tightness criterion to $(B_n/n)^{1/2} \Hspc_n\vphantom{H}'(n\cdot)$ on the event $A_n$. Let us enumerate the vertices of $T_n$ in lexicographical order as $u_0 < u_1 < \dots < u_n$. Fix $0 \le s < t \le 1$ such that $ns$ and $nt$ are both integers. Then $\Hspc_n\vphantom{H}'(nt) - \Hspc_n\vphantom{H}'(ns)$ is the sum of $\#\mathopen{\llbracket} u_{ns}, u_{nt}\mathclose{\llbracket}$ i.i.d. random variables distributed as $\mathsf{Y}' = Y' - \E[Y']$. Let $r \in [s,t]$ be as follows: set $r=s$ if $u_{ns}$ is an ancestor of $u_{nt}$; otherwise, $nr$ is an integer and $u_{nr}$ is the ancestor of $u_{nt}$ whose parent is the last common ancestor of $u_{ns}$ and $u_{nt}$. In this way, $r$ satisfies $H_n(nr) = \inf_{[s, t]} H_n(n \cdot)$ and it holds that
\[\#\mathopen{\llbracket} u_{ns}, u_{nt}\mathclose{\llbracket}
\le 2+ H_n(ns) + H_n(nt) - 2H_n(nr).\]
On the event $A_n$, the right-hand side is bounded by
\[C \frac{n}{B_n} (|t-r|^\gamma + |r-s|^\gamma)
\le 2C \frac{n}{B_n} |t-s|^\gamma.\]
Fix any $q \ge 2$ and let us write $C_q$ for a constant which will vary from one line to the other, and which depends on $q$ and the law of $Y$, but not on $s,t$ nor $n$. 

Note that $\E[|\mathsf{Y}'|^2] = \Var(Y') \le \E[|Y'|^2] \le \E[|Y|^2] < \infty$ and $|\mathsf{Y}'|^q \le 2^q(n^2/B_n)^{q(\frac{\alpha-1}{4\alpha} + \varepsilon)}$. Appealing to~\cite[Theorem~2.9]{Petrov:Limit_theorems_of_probability_theory} (sometimes called the Rosenthal inequality), we obtain
\begin{align*}
\Esc{\left(\frac{|\Hspc_n\vphantom{H}'(nt) - \Hspc_n\vphantom{H}'(ns)|}{(n/B_n)^{1/2}}\right)^q}{A_n}
&\le C_q \left(\frac{B_n}{n}\right)^{q/2} \left(\frac{n}{B_n} |t-s|^\gamma \Es{|\mathsf{Y}'|^q} + \left(\frac{n}{B_n} |t-s|^\gamma\right)^{q/2} \Es{|\mathsf{Y}'|^2}^{q/2}\right)
\\
&\le C_q \left(\left(\frac{B_n}{n}\right)^{q/2-1} \left(\frac{n^2}{B_n}\right)^{q(\frac{\alpha-1}{4\alpha} + \varepsilon)} |t-s|^\gamma + |t-s|^{q\gamma/2}\right).
\end{align*}
Recall that for every $\delta > 0$, we have $B_n \ll n^{\frac{1}{\alpha}+\delta}$ so
\[\frac{1}{\ln n} \ln\left(\left(\frac{B_n}{n}\right)^{q/2-1} \left(\frac{n^2}{B_n}\right)^{q(\frac{\alpha-1}{4\alpha} + \varepsilon)}\right)
\le \left(\frac{1}{\alpha}+\delta\right) \left(\frac{q}{2}-1 - q\left(\frac{\alpha-1}{4\alpha} + \varepsilon\right)\right)
+ 2q\left(\frac{\alpha-1}{4\alpha} + \varepsilon\right) - \left(\frac{q}{2}-1\right).\]
Taking $\varepsilon=\delta=0$, the right-hand side reads
\begin{align*}
\frac{1}{\alpha} \left(\frac{q}{2}-1 - q\frac{\alpha-1}{4\alpha}\right) + q\frac{\alpha-1}{2\alpha} - \left(\frac{q}{2}-1\right)
&=\frac{\alpha-1}{\alpha} \left(1 - \frac{q}{4\alpha}\right),
\end{align*}
which tends to $-\infty$ as $q \to \infty$. Now if $\varepsilon,\delta > 0$ are small, one obtains instead the exponent
\[\frac{\alpha-1}{\alpha} \left(1 - \frac{q}{4\alpha}\left[1 - \delta\left(\frac{4\alpha^2}{\alpha-1} + \frac{1}{2}\right) + 2\alpha \varepsilon \left(2\left(\frac{1}{\alpha}+\delta\right)-1\right)\right]\right) - \delta,\]
which still tends to $-\infty$ as $q \to \infty$. Notice also that $n^{-1} \le |s-t| \le 1$, so we may choose $q$ large enough so that
\[\sup_{n \to \infty} \Esc{\left(\frac{|\Hspc_n\vphantom{H}'(nt) - \Hspc_n\vphantom{H}'(ns)|}{(n/B_n)^{1/2}}\right)^q}{A_n}
\le C_q |t-s|^2.\]
This bound holds whenever $s, t \in [0, 1]$ are such that $ns$ and $nt$ are both integers. Since $\Hspc_n\vphantom{H}'$ is defined by linear interpolation between such times, then it also holds for every $s, t \in [0,1]$. The standard Kolmogorov criterion then implies the following bound for the H\"{o}lder norm of $\Hspc_n\vphantom{H}'$ for some $\theta >0$: for every $\delta > 0$, there exists $C > 0$ such that for every $n$ large enough,
\[\Prc{\sup_{0 \le s \ne t \le 1} \frac{|\Hspc_n(nt) - \Hspc_n(ns)|}{(n/B_n)^{1/2} |t-s|^\theta} \le C}{A_n} \ge 1-\delta.\]
Since $\P(A_n) \to 1$ as $n \to \infty$, the same result holds under the unconditional probability, which shows that the sequence $(B_n/n)^{1/2} \Hspc_n\vphantom{H}'(n\cdot)$ is indeed tight and the proof is complete.

\subsection{Proof of Theorem~\ref{thm:convergence_serpent_non_centre} for non-centred snakes}
We next assume that $\E[Y] = m \ne 0$ and prove Theorem~\ref{thm:convergence_serpent_non_centre}. The intuition behind the result is that the fluctuations are small and disappear after scaling, only the contribution of the expected displacement remains. Indeed, as in the preceding proof, we have $\frac{B_n}{n} \E[\Hsp_n(n \cdot) \mid T_n] = m \frac{B_n}{n} H_n(n \cdot)$ which converges to $m \cdot \Hexc$ so it is equivalent to consider the centred version of $Y$. For the rest of the proof, we thus assume that $\E[Y]=0$ and $\P(|Y| \ge n/B_n) = o(n^{-1})$, and we prove that the corresponding scaled spatial process $\frac{B_n}{n} \Hsp_n(n\cdot)$ converges to the null process.

The fact that our assumption is necessary for tightness of this process goes exactly as for Theorem~\ref{thm:convergence_serpent_iid}, in the first step: Now the tails of $Y$ are so that $\P(|Y| \ge y) = o(y^{-\frac{\alpha}{\alpha(1+\delta)-1}})$ for every $\delta > 0$ and we may proceed as previously, with the sequence $b_n = (n^2/B_n)^{\frac{\alpha-1}{2\alpha} + \varepsilon}$ instead: up to $\delta, \varepsilon$, both exponents in the tails of $Y$ and in $b_n$ are half what they were in the preceding section, so these changes compensate each other. Then the previous arguments apply \emph{mutatis mutandis}: we have
\[\lim_{n \to \infty} \frac{n^2}{B_n} \P(|Y| > b_n)^2 = 0
\qquad\text{and}\qquad
\lim_{n \to \infty} \left(\frac{n}{B_n}\right)^{1/2} b_n \Pr{|Y| > b_n}= 0,\]
so both processes $\frac{B_n}{n} \Hsp_n\vphantom{H}''(n\cdot)$ and $\frac{B_n}{n} \E[\Hsp_n\vphantom{H}'(n\cdot)\mid T_n]$ converge to the null process. Similarly, with the preceding notations, for $s,t \in [0,1]$, we have
\begin{align*}
\Esc{\left(\frac{|\Hspc_n\vphantom{H}'(nt) - \Hspc_n\vphantom{H}'(ns)|}{n/B_n}\right)^q}{A_n}
&\le C_q \left(\frac{B_n}{n}\right)^q \left(\frac{n}{B_n} |t-s|^\gamma \Es{|\mathsf{Y}'|^q} + \left(\frac{n}{B_n} |t-s|^\gamma\right)^{q/2} \Es{|\mathsf{Y}'|^2}^{q/2}\right)
\\
&\le C_q \left(\left(\frac{B_n}{n}\right)^{q-1} \left(\frac{n^2}{B_n}\right)^{q(\frac{\alpha-1}{2\alpha} + \varepsilon)} |t-s|^\gamma + \left(\frac{B_n}{n} \Es{|Y'|^2} |t-s|^\gamma\right)^{q/2}\right).
\end{align*}
The first term in the last line is controlled as previously: the factor $1/2$ in the exponent in $b_n$ compensate the fact that we now rescale by $n/B_n$ instead of $(n/B_n)^{1/2}$ and similar calculations as in the preceding section show that this first term is bounded by $|t-s|^\gamma$ times $n$ raised to a power which converges to $-\infty$ as $q \to \infty$. The only change compared to the proof of Theorem~\ref{thm:convergence_serpent_iid} is that we may not have $\E[|Y|^2] < \infty$. Still,
\[\Es{|Y'|^2}
= 2 \int_0^{b_n} y \Pr{|Y| > y} \d y
= O\left(\int_1^{b_n} y^{1-\frac{\alpha}{\alpha(1+\delta)-1}} \d y\right).\]
Note that if $\alpha < 2$, then for $\delta$ sufficiently small, the exponent is smaller than $-1$ so the integral converges. If $\alpha = 2$, then since $B_n$ is at least of order $n^{1/2}$ (and it is exactly of this order if and only if $\mu$ has finite variance), then we do not need any $\delta$: we have $\P(|Y| \ge y) = o(y^{-2})$ and so
\[\Es{|Y'|^2} = O\left(\int_1^{b_n} y^{-1} \d y\right) = O(\ln b_n) = O(\ln n).\]
In both cases, $\frac{B_n}{n} \E[|Y'|^2]$ is bounded above by $n^{-\eta}$ for some $\eta > 0$ and we may conclude as in the preceding proof that for $q$ large enough,
\[\sup_{n \to \infty} \Esc{\left(\frac{|\Hspc_n\vphantom{H}'(nt) - \Hspc_n\vphantom{H}'(ns)|}{n/B_n}\right)^q}{A_n}
\le C_q |t-s|^2,\]
and so the process $\frac{B_n}{n} \Hspc_n\vphantom{H}'(n\cdot)$ is tight. Moreover, the preceding bounds applied with $s=0$ and $t \in [0,1]$ fixed show that the one-dimensional marginals converge in distribution to $0$ so the whole process converges in distribution to the null process, which completes the proof.

\subsection{Application to the number of inversions}

Before discussing heavy-tailed snakes, let us apply Theorem~\ref{thm:convergence_serpent_iid} to prove Corollary~\ref{cor:inversion}, following the argument of Cai \emph{et al.}~\cite[Section 5]{Cai-Holmgren-Janson-Johansson-Skerman:Inversions_in_split_trees_and_conditional_Galton_Watson_trees}. 

First note that for a given tree $T$ with $n+1$ vertices listed $\varnothing = u_0 < u_1 < \dots < u_n$ in lexicographical order, we have
\[\E[I(T)] 
= \frac{1}{2} \sum_{0 \le i < j \le n} \ind{u_i \text{ is a ancestor of } u_j}
= \frac{1}{2} \sum_{u \in T} |u|
= \frac{1}{2} \Lambda(T),\]
where we recall the notation $\Lambda(T)$ for the total path length of $T$. Therefore the convergence of the conditional expectation of $I(T_n)$ follows from~\eqref{eq:convergence_total_path_length}. We focus on the fluctuations.

Let $(Y_u)_{u \in T_n}$ be i.i.d. spatial increments on the tree $T_n$, where each $Y_u$ has the uniform distribution on the interval $(-1/2, 1/2)$, with variance $\Sigma^2 = 1/12$. The main idea, see the discussion around Equation (5.1) in~\cite{Cai-Holmgren-Janson-Johansson-Skerman:Inversions_in_split_trees_and_conditional_Galton_Watson_trees}, is the introduction of a coupling between an inversion $I$ on $T_n$ and $(Y_u)_{u \in T_n}$, which yields the following comparison:
\[\left|J(T_n)  - \left(I(T_n) - \frac{\Lambda(T_n)}{2}\right)\right| \le 2n,\]
where $J(T_n) = \sum_{v \in T_n} S_v$ and we recall that $S_v$ is the spatial position of the vertex $v$. Still following~\cite[Section 5]{Cai-Holmgren-Janson-Johansson-Skerman:Inversions_in_split_trees_and_conditional_Galton_Watson_trees}, let us define a process $\widehat{R}_n$ on $[0,2n]$ as follows: recall that $\Csp_n$ is the spatial process in \emph{contour order}, then for every $t \in [0,2n]$, if $t$ is an integer, then set $\widehat{R}_n(t) = \Csp_n(t)$, otherwise set
\[\widehat{R}_n(t) =
\begin{cases}
\Csp_n(\lfloor t\rfloor),	&\text{if } C_n(\lfloor t\rfloor) > C_n(\lceil t\rceil),\\
\Csp_n(\lceil t\rceil),	&\text{if } C_n(\lfloor t\rfloor) < C_n(\lceil t\rceil).
\end{cases}\]
In other words, $\widehat{R}_n$ is a step function on $[0,2n]$ which is constant on each interval $[i, i+1)$ with $0 \le i \le 2n-1$, on which it takes the value of the position of either the vertex visited at time $i$ or $i+1$ in the contour order, whichever is the farthest (in graph distance) from the root. It then readily follows that
\[J(T_n) = \frac{1}{2} \int_0^{2n} \widehat{R}_n(t) \d t.\]
Observe that $|\widehat{R}_n(t) - \Csp_n(t)| \le 1/2$ for every $t \in [0,2n]$ so,
\[I(T_n) - \frac{\Lambda(T_n)}{2}
= \frac{1}{2} \int_0^{2n} \Csp_n(t) \d t + O(n)
= n \int_0^1 \Csp_n(2nt) \d t + O(n).\]
Notice that $(n^3/B_n)^{1/2} \gg n$; since $\Sigma^2 = 1/12$, we conclude from Theorem~\ref{thm:convergence_serpent_iid} that
\[\left(\frac{B_n}{12n^3}\right)^{1/2} \left(I(T_n) - \frac{\Lambda(T_n)}{2}\right)
= \int_0^1 \left(\frac{B_n}{12n}\right)^{1/2} \Csp_n(2nt) \d t + o(1)
\cvloi \int_0^1 \Snake_t \d t.\]

\section{Heavy-tailed snakes}
\label{sec:queues_lourdes}

We investigate more precisely in this section the behaviour of $\Hsp_n$ and $\Csp_n$ when the assumption $\P(|Y| \ge (n/B_n)^{1/2}) = o(n^{-1})$ of Theorem~\ref{thm:convergence_serpent_iid} fails. In this case, these processes cannot converge to continuous function since they admit large increments. In fact, they do not converge to functions at all; indeed, with high probability as $n$ becomes large, we may find in the tree $T_n$ vertices, say, $u$, which have a microscopic descendance and such that $|Y_u|$ is very large so the processes $\Hsp_n$ and $\Csp_n$ have a macroscopic increment, almost immediately followed by the opposite increment, which gives rise at the limit to a vertical peak. Nonetheless, as proved by Janson \& Marckert~\cite{Janson-Marckert:Convergence_of_discrete_snakes} they still converge in distribution in the following weaker sense.

In this section, we identify continuous fonctions from $[0,1]$ to $\R$ with their graph, which belong to the space $\K$ of compact subsets of $[0,1] \times \R$, which is a Polish space when equipped with the Hausdorff distance: the distance between two compact sets $A$ and $B$ is
\[d_H(A,B) = \inf\{r > 0 : A \subset B^{(r)} \text{ and } B\subset A^{(r)}\},\]
where $A^{(r)} = \{x \in \R^2 : d(x,A) \le r\}$. Then a sequence of functions $(f_n)_{n \ge 1}$ in $\mathscr{C}([0,1], \R)$ may converge in $\K$ to a limit $K$ which is not the graph of a function; note that if $K$ is the graph of a continuous function, then this convergence is equivalent to the uniform convergence considered previously. The type of limits we shall consider are constructed as follows. Take $f \in \mathscr{C}([0,1], \R)$ and $\Xi$ a collection of points in $[0,1] \times \R$ such that for every $x \in [0,1]$ there exists at most one element $y \in \R$ such that $(x,y) \in \Xi$, and for every $\delta > 0$, the set $\Xi \cap ([0,1] \times (\R \setminus [-\delta, \delta]))$ is finite. We then define
a subset $f \bowtie \Xi \subset [0,1] \times \R$ as 
the union of the graph of $f$ and the following collection of vertical segments: for every point $(x,y) \in \Xi$, we place a vertical segment of length $|y|$ at $(x,f(x))$, directed up or down according to the sign of $y$. Then $f \bowtie \Xi$ belongs to $\K$ and the map $(f,\Xi) \mapsto f \bowtie \Xi$ is measurable so we may take a random function $f$ and a random set $\Xi$ and obtain a random set $f \bowtie \Xi$.

Again, our results focus on the head of the snakes, but they imply the convergence of the entire snakes towards `jumping snakes', see~\cite[Section~3.1]{Janson-Marckert:Convergence_of_discrete_snakes}.

\subsection{The intermediate regime}

In the next result, we investigate the case where $n\cdot \P(|Y| \ge (n/B_n)^{1/2})$ is uniformly bounded. Extracting a subsequence if necessary, we may assume in fact that both tails $n\cdot \P(Y \ge (n/B_n)^{1/2})$ and $n\cdot \P(-Y \ge (n/B_n)^{1/2})$ converge.

\begin{thm}[Convergence to a `hairy snake']\label{thm:convergence_serpent_poilu}
Suppose that $\E[Y]=0$, that $\Sigma^2 \coloneqq \E[Y^2] \in (0, \infty)$ and that there exists $a_+, a_- \in [0,\infty)$ such that $a_+ + a_- > 0$ and
\[\lim_{n \to \infty} n \cdot \P(Y \ge (n/B_n)^{1/2}) = a_+
\qquad\text{and}\qquad
\lim_{n \to \infty} n \cdot \P(-Y \ge (n/B_n)^{1/2}) = a_-.\]
Let $\Xi$ be a Poisson random measure on $[0,1] \times \R$ with intensity $\frac{2\alpha}{\alpha-1} y^{-1-\frac{2\alpha}{\alpha-1}} (a_+ \ind{y > 0} + a_- \ind{y<0}) \d x \d y$ which is independent of the pair $(\Hexc, \Snake)$. Then the convergence in distribution of the sets
\[\left\{\left(\frac{B_n}{n}\right)^{1/2} \Hsp_n(nt) ; t \in [0,1]\right\} \cvloi (\Sigma\cdot\Snake) \bowtie \Xi,\]
holds in $\K$, jointly with~\eqref{eq:cv_GW_Duquesne}. The same holds (jointly) when $\Hsp_n(n\cdot)$ is replaced by $\Csp_n(2n\cdot)$.
\end{thm}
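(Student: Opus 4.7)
The plan is to extend the cut-off argument from the proof of Theorem~\ref{thm:convergence_serpent_iid}. Fix a small parameter $\epsilon \in (0,1)$, decompose each displacement as $Y_u = Y'_u + Y''_u$ with $Y'_u = Y_u \ind{|Y_u| \le \epsilon (n/B_n)^{1/2}}$, and write accordingly $\Hsp_n = \Hsp_n\vphantom{H}' + \Hsp_n\vphantom{H}''$. Since $Y'$ is bounded it trivially satisfies the tail hypothesis of Theorem~\ref{thm:convergence_serpent_iid}, which yields the uniform convergence $(B_n/n)^{1/2} \Hsp_n\vphantom{H}'(n\cdot) \cvloi \Sigma_\epsilon \cdot \Snake$ in $\mathscr{C}([0,1], \R)$, where $\Sigma_\epsilon^2 = \Var(Y')$ tends to $\Sigma^2$ as $\epsilon \to 0$ thanks to the finite-variance assumption. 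The graph of the small-jump process therefore converges in Hausdorff distance to that of $\Sigma_\epsilon \Snake$, which is uniformly close to that of $\Sigma \Snake$.

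For the large displacements, set $V_n = \{u \in T_n : |Y_u| > \epsilon (n/B_n)^{1/2}\}$. The tail hypothesis gives $\Es{|V_n|} = n \cdot \Pr{|Y| > \epsilon(n/B_n)^{1/2}} \to (a_+ + a_-)\epsilon^{-2\alpha/(\alpha-1)}$, and the classical Poisson limit theorem for i.i.d.~arrays together with the independence of $(Y_u)$ from $T_n$ gives, jointly with~\eqref{eq:cv_GW_Duquesne} and independently of $(\Hexc, \Snake)$, the convergence of the marked empirical measure
\[\Xi_n\vphantom{\Xi}^{(\epsilon)} = \sum_{u \in V_n} \delta_{\left(\mathrm{ind}(u)/n,\; Y_u/(n/B_n)^{1/2}\right)},\]
where $\mathrm{ind}(u)$ denotes the lexicographical rank of $u$, to a Poisson point measure $\Xi^{(\epsilon)}$ on $[0,1] \times (\R \setminus [-\epsilon,\epsilon])$ with intensity $\frac{2\alpha}{\alpha-1}|y|^{-1-\frac{2\alpha}{\alpha-1}}(a_+\ind{y>0} + a_-\ind{y<0}) \d x \d y$.

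For the peaks to be vertical in the Hausdorff limit, each $u \in V_n$ must have a subtree $T_u$ of size $o(n)$. Averaging $|T_u|$ over a uniform $u \in T_n$ gives $\Es{|T_u| \mid T_n} = 1 + \Lambda(T_n)/(n+1) = O_\P(n/B_n)$ by~\eqref{eq:convergence_total_path_length}, hence $|T_u|/n = o_\P(1)$; since $|V_n| = O_\P(1)$ and the $(Y_u)$ are independent of $T_n$, the elements of $V_n$ are, conditionally on $T_n$, uniform in the tree, and a union bound gives $\max_{u \in V_n} |T_u|/n = o_\P(1)$. A similar second-moment calculation yields $\Pr{\exists u \neq v \in V_n \text{ in ancestor-descendant relation}} = O(\Es{|V_n|^2}/B_n) \to 0$. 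On this good event, the lex.\ intervals $[\mathrm{ind}(u), \mathrm{ind}(u) + |T_u|]$ for $u \in V_n$ are disjoint and of rescaled width $o(1)$; outside their union one has $\Hsp_n = \Hsp_n\vphantom{H}'$, and on each interval corresponding to $u \in V_n$ one has $\Hsp_n = \Hsp_n\vphantom{H}' + Y_u$ up to the inner snake on $T_u$, whose rescaled fluctuations are $O_\P((|T_u|/n)^{1/2 - 1/(2\alpha)}) = o_\P(1)$ by Potter's bounds. Consequently, the graph of $(B_n/n)^{1/2}\Hsp_n(n\cdot)$ converges in Hausdorff distance to $(\Sigma_\epsilon \Snake) \bowtie \Xi^{(\epsilon)}$. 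Letting $\epsilon \to 0$, the peaks of $\Xi$ of height below $\epsilon$ contribute at most $\epsilon$ to the Hausdorff distance from the base curve $\Sigma \Snake$, so $(\Sigma_\epsilon \Snake) \bowtie \Xi^{(\epsilon)} \to (\Sigma \Snake) \bowtie \Xi$, and a diagonal extraction concludes. The statement for $\Csp_n(2n\cdot)$ follows by the usual height-contour comparison.

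The main difficulty lies in the joint passage to the limit in $n$ and $\epsilon$, which requires uniform estimates on both parts of the decomposition. The most delicate point is the verticality of the peaks: the inner snake on each subtree $T_u$ for $u \in V_n$ must not spread a peak into a genuinely two-dimensional region, which is ensured by the Potter-bound estimate above and crucially uses $\alpha > 1$. The asymptotic independence between $\Xi_n^{(\epsilon)}$ on the one hand and the small-jump snake together with $(\Hexc, \Snake)$ on the other also requires care; it should follow from the independence of the large marks $(Y_u)_{u \in V_n}$ from the $(Y'_v)$'s contributing to the small-jump snake, together with the fact that the lex.\ positions of large marks are asymptotically uniform on $[0,1]$ independently of the tree structure encoding $(\Hexc, \Snake)$.
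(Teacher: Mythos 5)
Your overall plan follows the paper's (cut-off, Poisson limit for the large jumps, verticality of the peaks, combine), but there are two genuine gaps.

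The first concerns your single truncation at $\epsilon(n/B_n)^{1/2}$ and the claim that $(B_n/n)^{1/2}\Hsp_n'(n\cdot)$ converges in $\mathscr{C}([0,1],\R)$. This cannot hold. The truncated increment $Y' = Y\ind{|Y| \le \epsilon(n/B_n)^{1/2}}$ is $n$-dependent, so Theorem~\ref{thm:convergence_serpent_iid} does not apply directly, and tightness in $\mathscr{C}$ fails: with probability of order $1/n$ per vertex, $|Y'_u|$ is of order $\epsilon(n/B_n)^{1/2}$, and after rescaling by $(B_n/n)^{1/2}$ this produces a jump of order $\epsilon$ across a single edge, i.e.\ over a rescaled time $1/n$. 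No Kolmogorov moment bound can absorb such a jump, so the family is not $\mathscr{C}$-tight for any fixed $\epsilon>0$. (Separately, for fixed $\epsilon > 0$ the truncation level tends to infinity as $n\to\infty$, so $\Var(Y')\to\Sigma^2$; the quantity you call $\Sigma_\epsilon$ equals $\Sigma$ in the limit, and ``$\Sigma_\epsilon\to\Sigma$ as $\epsilon\to 0$'' is not meaningful.) The paper instead truncates much lower, at $b_n = (n^2/B_n)^{(\alpha-1)/(4\alpha)+\varepsilon}\ll(n/B_n)^{1/2}$, where the Rosenthal/Kolmogorov moment argument of Section~\ref{sec:thm_principal_step5} still applies (only a big-$O$ tail bound is needed), and then treats the ``medium'' increments in $(b_n,\eta(n/B_n)^{1/2}]$ separately: on the high-probability event $E_n^c$ (no branch carries two $>b_n$ jumps) they contribute at most $\eta$ to the uniform norm of $(B_n/n)^{1/2}\Hsp_n$, hence at most $\eta$ to the Hausdorff distance. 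Your conclusion survives as a Hausdorff estimate up to an $\epsilon$ error, but not as a $\mathscr{C}$-convergence, and making this precise requires the paper's double truncation.

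The second gap is the joint passage to the limit. You assert the asymptotic independence ``should follow from the independence of the large marks $(Y_u)_{u\in V_n}$ from the $(Y'_v)$'s contributing to the small-jump snake,'' but these are not independent: the event $\{Y'_u=0\}$ carries information about whether $|Y_u|>\epsilon(n/B_n)^{1/2}$, so the two halves of the decomposition share randomness. This is precisely what Section~\ref{sec:thm_serpent_poilu_step5} addresses with a resampling device: the increments contributing to $\Hsp_n''$ are replaced by fresh independent copies of $Y\ind{|Y|\le b_n}$, producing a process $\tildeHsp_n$ that is genuinely independent of $\Hsp_n''$ and differs from $\Hsp_n'$ by at most $b_n = o((n/B_n)^{1/2})$ per branch on $E_n^c$. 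Without such a device, the joint convergence of the two pieces is not established.

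The remaining ingredients --- the binomial-to-Poisson limit for the marked empirical point measure, the asymptotic uniformity of the lexicographic positions of the big jumps (which is immediate since the $Y$'s are independent of $T_n$), and the use of the total path length to show the subtrees attached to big jumps are microscopic so the peaks are vertical --- match the paper's Sections~\ref{sec:thm_serpent_poilu_step3} and~\ref{sec:thm_serpent_poilu_step4}.
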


\begin{figure}[!ht] \centering
\includegraphics[width=.45\linewidth]{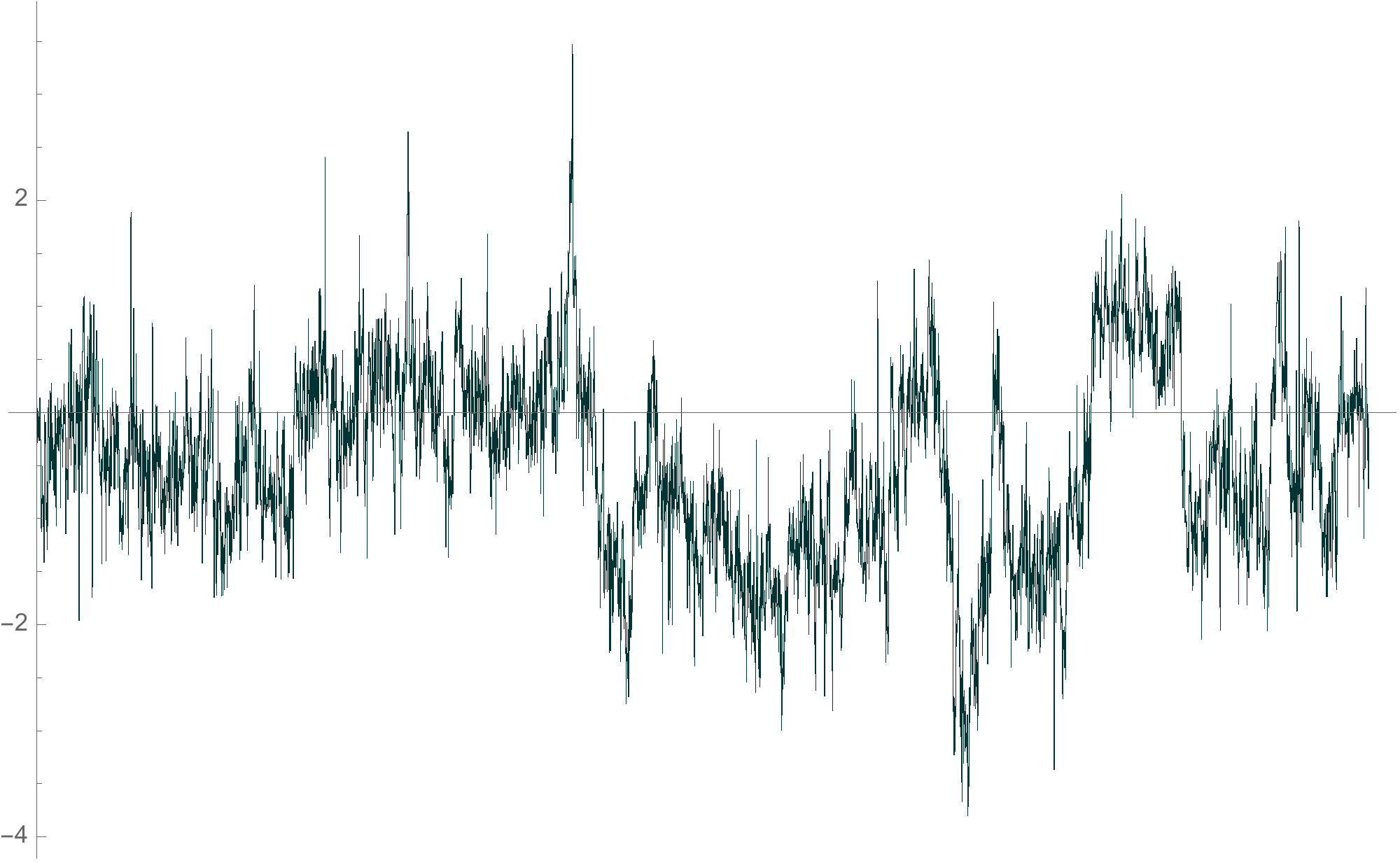}
\qquad
\includegraphics[width=.45\linewidth]{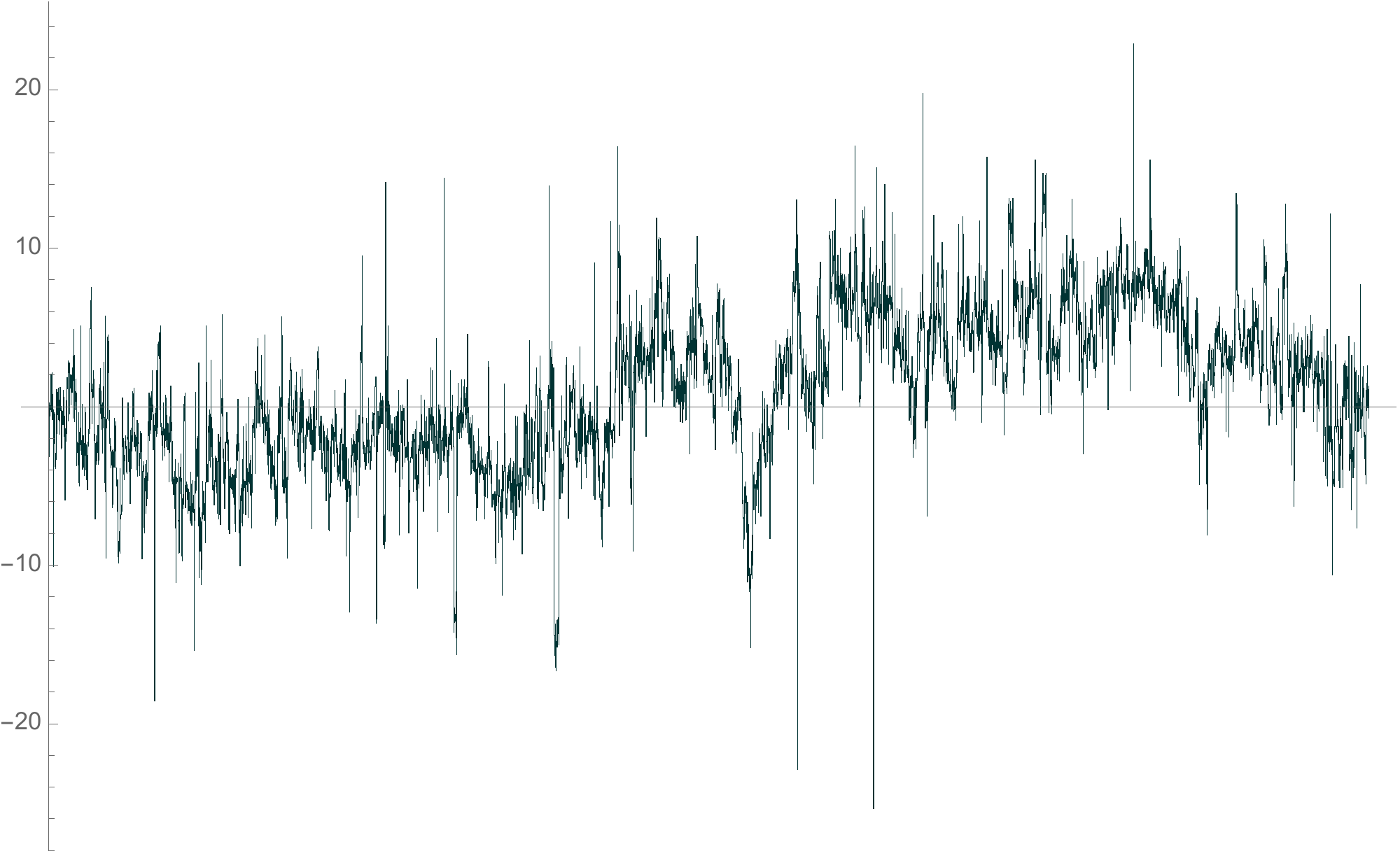}
\caption{Two instances of the spatial height process $H_n(n \cdot)$ associated with the height process of Figure~\ref{fig:Levy_hauteur} where in both cases, $Y$ is symmetric and such that $\lim_{n \to \infty} n \cdot \P(Y \ge (n/B_n)^{1/p}) = 1$; on the left, $p=2$ and on the right, $p=\numprint{0.6}$.}
\label{fig:serpents_poilus}
\end{figure}

The intuition behind this result is that, as opposed to Theorem~\ref{thm:convergence_serpent_iid}, we can find here vertices $u$ of $T_n$ such that $|Y_u|$ is macroscopic, and these points lead to the peaks given by $\Xi$ at the limit. Indeed, for every $c > 0$, we have
\[\left(\frac{cn}{B_{cn}}\right)^{1/2} = c^{\frac{\alpha-1}{2\alpha}} \left(\frac{n}{B_n}\right)^{1/2} \left(\frac{n^{-1/\alpha} B_n}{(cn)^{-1/\alpha} B_{cn}}\right)^{1/2},\]
and the very last term converges to $1$ since $n^{-1/\alpha} B_n$ is slowly varying at infinity. Therefore, for every $y > 0$ fixed, we have
\[y \left(\frac{n}{B_n}\right)^{1/2} \enskip\mathop{\sim}^{}_{n \to \infty}\enskip \left(\frac{y^{\frac{2\alpha}{\alpha-1}}n}{B_{y^{\frac{2\alpha}{\alpha-1}}n}}\right)^{1/2}.\]
Observe that under the assumption of Theorem~\ref{thm:convergence_serpent_poilu}, we have
\[\Pr{Y > \left(\frac{y^{\frac{2\alpha}{\alpha-1}}n}{B_{y^{\frac{2\alpha}{\alpha-1}}n}}\right)^{1/2}} \enskip\mathop{\sim}^{}_{n \to \infty}\enskip a_+ y^{-\frac{2\alpha}{\alpha-1}} n^{-1}.\]
For every $\varepsilon > 0$, the same relation holds when each occurence (on both sides) of $y^{\frac{2\alpha}{\alpha-1}}$ is multiplied by $1+\varepsilon$ or by $1-\varepsilon$, which enables to conclude (the second limit is obtained by a symmetric argument) that under the assumption of Theorem~\ref{thm:convergence_serpent_poilu}, we have
\[\lim_{n \to \infty} n \cdot \Pr{Y > y \left(\frac{n}{B_n}\right)^{1/2}} = a_+ y^{-\frac{2\alpha}{\alpha-1}},
\qquad\text{and}\qquad
\lim_{n \to \infty} n \cdot \Pr{-Y < y \left(\frac{n}{B_n}\right)^{1/2}} = a_- y^{-\frac{2\alpha}{\alpha-1}}.\]
Since, conditional on $T_n$, the cardinal $\#\{u \in T_n : Y_u > y (n/B_n)^{1/2}\}$ has the binomial distribution with parameters $n$ and $\P(Y > y (n/B_n)^{1/2})$, this number is asymptotically Poisson distributed with rate $a_+ y^{-\frac{2\alpha}{\alpha-1}}$, which indeed corresponds to $\Xi$, provided that the locations are asymptotically uniformly distributed in the tree.

As for Theorem~\ref{thm:convergence_serpent_iid}, let us decompose the proof into several steps, following closely the argument of the proof of Theorem~5 in \cite{Janson-Marckert:Convergence_of_discrete_snakes}.

\subsubsection{Contribution of the small jumps}\label{sec:thm_serpent_poilu_step1}
As in the proof of Theorem~\ref{thm:convergence_serpent_iid}, let us treat separately the large and small increments: put $b_n = (n^2/B_n)^{\frac{\alpha-1}{4\alpha} + \varepsilon}$ for some $\varepsilon > 0$ small to be tuned. For every vertex $u \in T_n$, let $Y_u' = Y_u \ind{|Y_u| \le b_n}$ and $Y_u'' = Y_u \ind{|Y_u| > b_n}$, define then $\Hsp_n\vphantom{H}'$ and $\Hsp_n\vphantom{H}''$ the spatial processes in which the increments $Y_u$ are replaced by $Y_u'$ and $Y_u''$ respectively, so $\Hsp_n = \Hsp_n\vphantom{H}' + \Hsp_n\vphantom{H}''$. Observe that the only difference with Theorem~\ref{thm:convergence_serpent_iid} is that our assumption now implies $\P(|Y| \ge y) = O(y^{-\frac{2\alpha}{\alpha(1+\delta)-1}})$, whereas the big $O$ was a small $o$ there. Actually, the arguments used to control the small jumps in Section~\ref{sec:thm_principal_step4} and~\ref{sec:thm_principal_step5} only requires a big $O$ so we conclude that, as there, we have
\[\left(\frac{B_n}{n} H_n(n t), \left(\frac{B_n}{n \Sigma^2}\right)^{1/2} \Hsp_n\vphantom{H}'(nt)\right)_{t \in [0,1]} \cvloi (\Hexc_t, \Snake_t)_{t \in [0,1]}.\]

\subsubsection{Contribution of the medium-large jumps}\label{sec:thm_serpent_poilu_step2}
We next claim that we have
\begin{equation}\label{eq:serpent_poilu_grands_sauts}
\left\{\left(\frac{B_n}{n}\right)^{1/2} \Hsp_n\vphantom{H}''(nt) ; t \in [0,1]\right\} \cvloi 0 \bowtie \Xi,
\end{equation}
in $\K$. Let us approximate both sides. First note that the intensity measure of $\Xi$ explodes at the $x$ axis, but for every $\eta > 0$, the restricted Poisson random measure $\Xi^{\eta} = \Xi \cap ([0,1] \times (\R\setminus [-\eta, \eta]))$ is almost surely finite. Clearly, the Hausdorff distance between $0 \bowtie \Xi$ and $0 \bowtie \Xi^{\eta}$ is at most $\eta$. Similarly, let us truncate further $Y_u''$ by setting $Y_u^{\eta} = Y_u \ind{|Y_u| > \eta (n/B_n)^{1/2}}$ and then define $\Hsp_n\vphantom{H}^{\eta}$ as the spatial process in which the increments $Y_u$ are replaced by $Y_u^{\eta}$. Recall the event $E_n$ that $T_n$ contains two vertices, say $u$ and $v$, such that $u$ is an ancestor of $v$ and both $|Y_u| > b_n$ and $|Y_v| > b_n$. Then again, $\P(E_n)$ converges to $0$ since the argument used in Section~\ref{sec:thm_principal_step3} only requires a big $O$ in the tail of $Y$. Furthermore, on the complement event, we have $\max_{0 \le i \le n} |\Hsp_n\vphantom{H}''(i)| = \max_{u \in T_n} |Y_u''|$ and $\max_{0 \le i \le n} |\Hsp_n\vphantom{H}^{\eta}(i)| = \max_{u \in T_n} |Y_u^{\eta}|$. The latter is either $\max_{u \in T_n} |Y_u''|$ or $0$ in the case $\max_{u \in T_n} |Y_u''| \le \eta (n / B_n)^{1/2}$ so we conclude that on the event $E_n^c$ whose probability tends to $1$, we have
\[\max_{0 \le i \le n} |\Hsp_n\vphantom{H}''(i)| - \max_{0 \le i \le n} |\Hsp_n\vphantom{H}^{\eta}(i)| \le \eta \left(\frac{n}{B_n}\right)^{1/2}.\]
Therefore, in order to prove~\eqref{eq:serpent_poilu_grands_sauts}, it suffices to prove that for every $\eta > 0$, it holds that
\begin{equation}\label{eq:serpent_poilu_tres_grands_sauts}
\left\{\left(\frac{B_n}{n}\right)^{1/2} \Hsp_n\vphantom{H}^{\eta}(nt) ; t \in [0,1]\right\} \cvloi 0 \bowtie \Xi^{\eta}.
\end{equation}

\subsubsection{A discrete Poisson random measure}\label{sec:thm_serpent_poilu_step3}
Let $\varnothing = u_0 < u_1<  \dots < u_n$ be the vertices of $T_n$ listed in lexicographical order, and let $1 \le k_1 < \dots < k_{N_n} \le n$ be the indices of those vertices $u$ of $T_n$ for which $Y_u^{\eta}$ is non-zero, or otherwise said $|Y_u| > \eta (n/B_n)^{1/2}$. Let $\nu_\eta(\d y) = \frac{2\alpha}{\alpha-1} y^{-1-\frac{2\alpha}{\alpha-1}} (a_+ \ind{y > \eta} + a_- \ind{y < -\eta})\d y$ so $\d x\nu_\eta(\d y)$ is the intensity of $\Xi^{\eta}$. Then the discussion just after the statement of the theorem shows that $N_n$ has the binomial distribution with parameters $n$ and $\P(|Y| > \eta (n/B_n)^{1/2})$ which converges to the Poisson distribution with parameter $\nu_\eta(\R) \in (0,\infty)$. Furthermore, conditional on $N_n$, 
the values $(Y_{u_{k_i}}^{\eta})_{1 \le i \le N_n}$ have the same distribution as i.i.d. copies of $Y$ conditioned on $|Y| > \eta (n/B_n)^{1/2}$, the indices $(k_i)_{1 \le i\le N_n}$ have the uniform distribution amongst the ranked $N_n$-tuples in $\{1, \dots, n\}$, and they are independent of the values $(Y_{u_{k_i}}^{\eta})_{1 \le i \le N_n}$. All the other $Y_u$'s are null.

Since $N_n$ is bounded in probability, the $N_n$-tuple $(k_i)_{1 \le i\le N_n}$ is well approximated as $n \to \infty$ by i.d.d. random times on $\{1, \dots, n\}$. Furthermore, one easily checks from their tail behaviour that the random variables $(B_n/n)^{1/2} Y_{u_{k_i}}^{\eta}$ converge in distribution as $n \to \infty$ towards i.i.d. random variables sampled from the probability $\nu_\eta(\cdot) / \nu_\eta(\R)$. Therefore, the set $\xi_n^\eta = \{(n^{-1} k_i, (B_n/n)^{1/2} Y_{u_{k_i}}^{\eta}) ; 1 \le i \le N_n\} \subset [0,1]\times \R$.  converges in law in $\K$ towards the random set $\xi_\infty^\eta = \{(U_i, X_i) ; 1 \le i \le N\}$ where $(U_i)_{i \ge 1}$ are i.i.d. uniformly distributed on $[0,1]$, $(X_i)_{i \ge 1}$ are i.i.d. with law $\nu_\eta(\cdot) / \nu_\eta(\R)$, $N$ has the Poisson distribution with parameter $\nu_\eta(\R)$, and all are independent. Then $\xi_\infty^\eta$ has the law of $\Xi^{\eta}$. One easily check that the mapping $\xi \mapsto 0 \bowtie \xi$ is continuous $\Xi$-almost surely, so we conclude that
\[0 \bowtie \xi_n \cvloi 0 \bowtie \Xi^{\eta}.\]
in $\K$.

\subsubsection{Contribution of the very large jumps}\label{sec:thm_serpent_poilu_step4}
In order to prove that~\eqref{eq:serpent_poilu_tres_grands_sauts}, and therefore~\eqref{eq:serpent_poilu_grands_sauts}, holds, in only remains to prove that
\[d_H\left(\left\{\left(\frac{B_n}{n}\right)^{1/2} \Hsp_n\vphantom{H}^{\eta}(nt) ; t \in [0,1]\right\}, 0 \bowtie \xi_n^\eta\right) \cvproba 0.\]
We implicitly work conditional on the event $E_n^c$ so there is at most one non zero value of $Y_u^{\eta}$ along each branch of $T_n$. In this case, the process $(B_n / n) \cdot \Hsp_n\vphantom{H}^{\eta}$ can be described at follows: it is null until time $k_1 - 1$, then it moves to a random value $Y_{u_{k_1}}$ at time $k_1$, it stays at this value for a time given by the total progeny of $u_{k_1}$ before going back to zero where it stays until time $k_2$ and so on. On the other hand, $0 \bowtie \xi_n^\eta$ is constructed by putting value $0$ for every time $t \in [0,1]$ except the $k_i$'s where we place vertical peaks given by the $Y_{u_{k_i}}$'s. Then the previous convergence is an easy consequence of the following lemma which extends \cite[Lemma~8]{Janson-Marckert:Convergence_of_discrete_snakes}:

\begin{lem}\label{lem:descendance_sommet_unif}
Let $v_n$ be uniformly distributed in $T_n$, and let $D(v_n)$ be its number of descendants, then $D(v_n) / n$ converges in probability to $0$.
\end{lem}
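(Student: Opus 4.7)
The plan is to reduce the lemma to the scaling of the total path length $\Lambda(T_n)$ already recorded in \eqref{eq:convergence_total_path_length}. The key identity, obtained by swapping the order of summation
\[\sum_{u \in T_n} |\theta_u| = \sum_{w \in T_n} \# \{u \in T_n : u \preceq w\} = \sum_{w \in T_n}(|w| + 1) = (n+1) + \Lambda(T_n),\]
shows that the sum of the subtree sizes over all vertices of $T_n$ is comparable to the total path length. Since $v_n$ is uniform in $T_n$ and independent of $T_n$, this yields
\[\Esc{D(v_n)}{T_n} \le \frac{1}{n+1} \sum_{u \in T_n} |\theta_u| = 1 + \frac{\Lambda(T_n)}{n+1}.\]

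Now I would invoke \eqref{eq:convergence_total_path_length}, which asserts that $B_n \Lambda(T_n)/n^2$ converges in distribution to $\int_0^1 \Hexc_t\,\d t$, a finite random variable. Since $B_n \to \infty$, this gives $\Lambda(T_n)/n^2 \cvproba 0$, and hence $\Esc{D(v_n)/n}{T_n} \cvproba 0$.

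To finish, I would apply the conditional Markov inequality: for every $\varepsilon > 0$,
\[\Prc{D(v_n) > \varepsilon n}{T_n} \le \min\left(1, \frac{\Esc{D(v_n)}{T_n}}{\varepsilon n}\right).\]
The right-hand side is bounded by $1$ and tends to $0$ in probability, so dominated convergence yields $\Pr{D(v_n) > \varepsilon n} \to 0$, which is the claim.

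There is no real obstacle here: once one notices the elementary identity linking the average subtree size to $\Lambda(T_n)$, the lemma is an immediate consequence of the scaling $\Lambda(T_n) = O_{\mathbf{P}}(n^2/B_n)$, which is itself a direct consequence of the Duquesne invariance principle \eqref{eq:cv_GW_Duquesne}. The only point to be careful about is that one obtains convergence in probability via a conditional argument, so one must pass from the conditional expectation to the unconditional probability by truncating at $1$ before taking expectations.
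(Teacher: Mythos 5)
Your proposal is correct and follows essentially the same route as the paper: compute $\Esc{D(v_n)}{T_n}$ by swapping the order of summation to get $\Lambda(T_n)/(n+1)$ (up to a harmless $+1$ from whether $v_n$ counts itself), then invoke the scaling $\Lambda(T_n) = O_{\P}(n^2/B_n)$ from \eqref{eq:convergence_total_path_length}, and conclude via Markov. The only cosmetic difference is in the final step: the paper splits on the event $\{\Lambda(T_n) > C(n+1)^2/B_n\}$ and lets $n \to \infty$ then $C \to \infty$, whereas you truncate the conditional probability at $1$ and apply dominated convergence, which is equivalent and arguably a touch cleaner.
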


Indeed, since $N_n$ is bounded in probability and our vertices $(u_{k_i})_{1 \le i \le N_n}$ are uniformly distributed (and conditioned to be different and to lie on different branches, but this occurs with high probability), it follows that their progeny are all small compared to $n$, so as $n \to \infty$, the process $(B_n / n) \cdot \Hsp_n\vphantom{H}^{\eta}$ indeed goes back almost immediately to $0$ after reaching a high value, as for $0 \bowtie \xi_n^\eta$.

\begin{proof}[Proof of Lemma~\ref{lem:descendance_sommet_unif}]
Let us condition on $T_n$:
\[\Esc{D(v_n)}{T_n} = \frac{1}{n+1} \sum_{v \in T_n} \sum_{w \in T_n} \ind{w \text{ is an ancestor of } v}
= \frac{1}{n+1} \sum_{w \in T_n} |v|
= \frac{1}{n+1} \Lambda(T_n),\]
where we recall the notation $\Lambda(T_n)$ for the total path length of $T_n$, which is of order $n^2/B_n$ according to~\eqref{eq:convergence_total_path_length}. We conclude from the Markov inequality that for every $\varepsilon, C > 0$, we have
\[\Pr{D(v_n) > \varepsilon n} \le \Pr{\Lambda(T_n) > C (n+1)^2/B_n} + C/B_n,\]
which converges to $0$ when letting first $n \to \infty$ and then $C \to \infty$.
\end{proof}

\subsubsection{Combining small and large jumps}\label{sec:thm_serpent_poilu_step5}
The proof is not finished! We have shown that
\[\left(\frac{B_n}{n} H_n(n t), \left(\frac{B_n}{n}\right)^{1/2} \Hsp_n\vphantom{H}'(nt)\right)_{t \in [0,1]} \cvloi (\Hexc_t, \Sigma^2\cdot \Snake_t)_{t \in [0,1]},\]
and
\[\left\{\left(\frac{B_n}{n}\right)^{1/2} \Hsp_n\vphantom{H}''(nt) ; t \in [0,1]\right\} \cvloi 0 \bowtie \Xi,\]
and yet, although, if for $f \in \mathscr{C}([0,1],\R)$ and $B \in \K$, we put $A \dotplus B = \{(x, y_A + y_B) ; (x,y_A) \in A, (x,y_B) \in B\}$, then this addition is continuous, we cannot directly conclude that
\[\left\{\left(\frac{B_n}{n}\right)^{1/2} (\Hsp_n\vphantom{H}'(nt)+\Hsp_n\vphantom{H}''(nt)) ; t \in [0,1]\right\} \cvloi (\Sigma^2\cdot \Snake_t) \bowtie \Xi = (\Sigma^2\cdot \Snake_t) \dotplus (0 \bowtie \Xi),\]
because the previous convergences in distribution may not hold simultaneously. Indeed, the processes $\Hsp_n\vphantom{H}'$ and $\Hsp_n\vphantom{H}''(nt)$ are not independent since each $Y_u$ either contributes to one or to the other.

We create independence by resampling the $Y_u$'s which contribute to $\Hsp_n\vphantom{H}''(nt)$ as follows: let $(Z_i)_{i \ge 1}$ be i.i.d. copies of $Y \ind{|Y| \le b_n}$ independent of the rest and put
\[\widetilde{Y}_i = Y_i \ind{|Y| \le b_n} + Z_i \ind{|Y| > b_n},\]
for each $1 \le i \le n$. Now the processes $\tildeHsp_n$ and $\Hsp_n\vphantom{H}''(nt)$ are independent, and furthermore, the error between $\Hsp_n\vphantom{H}'$ and $\tildeHsp_n$ comes from those $Y_u$'s for which $|Y_u| > b_n$; on the event $E_n^c$, there exists at most one such $u$ on each branch and therefore $\max_{0 \le i \le n} |\tildeHsp_n(i) - \Hsp_n\vphantom{H}'(i)| \le b_n = o(n/B_n)^{1/2}$ since each $\widetilde{Y}_i$ and each $Y_i'$ belongs to $[0, b_n]$. We conclude that
\[\left(\left(\frac{B_n}{n}\right)^{1/2} \Hsp_n\vphantom{H}'(nt), \left(\frac{B_n}{n}\right)^{1/2} \tildeHsp_n(nt)\right)_{t \in [0,1]} \cvloi (\Sigma^2\cdot \Snake_t, \Sigma^2\cdot \Snake_t)_{t \in [0,1]},\]
and the proof of Theorem~\ref{thm:convergence_serpent_poilu} is now complete.

\subsection{The strong heavy tail regime}
We finally investigate the regime where the tails of $Y$ are much stronger than what requires Theorem~\ref{thm:convergence_serpent_iid}. In this case, the extreme values dominate the small ones and the snake disappears at the limit, and only the vertical peaks remain, see Figure~\ref{fig:serpents_poilus} for a comparison with the previous case.

\begin{thm}[Convergence to a `flat hairy snake']\label{thm:convergence_serpent_poilu_plat}
Fix $p \in (0,2]$ and suppose that $\E[Y]=0$. Assume that there exists $\varrho \in [0,1]$ and two slowly varying functions at infinity $L^+$ and $L^-$ such that if $L = L^+ + L^-$, then as $x \to \infty$, the ratios $L^+(x)/L(x)$ and $L^-(x)/L(x)$ converge respectively to $\varrho$ and $1-\varrho$, and furthermore
\[n \cdot \P(Y \ge (n/B_n)^{1/p} L^+(n/B_n)) \cv 1
\qquad\text{and}\qquad
n \cdot \P(-Y \ge (n/B_n)^{1/p} L^-(n/B_n)) \cv 1,\]
If $p=2$, assume also that the function $L$ tends to infinity. Let $\Xi$ be a Poisson random measure on $[0,1] \times \R$ independent of the pair $\Snake$, with intensity
\[\frac{p\alpha}{\alpha-1} y^{-1-\frac{p\alpha}{\alpha-1}} \bigg(\varrho^{1+\frac{p\alpha}{\alpha-1}} \ind{y > 0} + (1-\varrho)^{1+\frac{p\alpha}{\alpha-1}} \ind{y<0}\bigg) \d x \d y.\]
Then the convergence in distribution of the sets
\[\left\{\frac{B_n^{1/p}}{n^{1/p} L(n/B_n)} \Hsp_n(nt) ; t \in [0,1]\right\} \cvloi 0 \bowtie \Xi,\]
holds in $\K$, jointly with~\eqref{eq:cv_GW_Duquesne}. The same holds (jointly) when $\Hsp_n(n\cdot)$ is replaced by $\Csp_n(2n\cdot)$.
\end{thm}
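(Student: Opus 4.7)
Write $s_n = (n/B_n)^{1/p} L(n/B_n)$, so that $\lambda_n \coloneqq 1/s_n$ is the rescaling in the statement. The hypothesis implies that $\Pr{|Y|\ge y}$ is regularly varying at infinity with index $\beta \coloneqq p\alpha/(\alpha-1)$, and hence for every fixed $z > 0$ the quantities $n\cdot\Pr{Y\ge z s_n}$ and $n\cdot\Pr{-Y\ge z s_n}$ admit explicit limits expressible in terms of $\varrho$ and $\beta$, which are exactly the positive and negative tails of the intensity of $\Xi$. The plan is to follow the five-step scheme used in the proof of Theorem~\ref{thm:convergence_serpent_poilu} \emph{mutatis mutandis}, with the crucial modification that the small-jump process $\Hsp_n\vphantom{H}'$ now \emph{vanishes} at the scale $\lambda_n$ (rather than converging to $\Sigma\cdot\Snake$ as before), so that only the vertical peaks coming from $\Hsp_n\vphantom{H}''$ survive at the limit.

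Fix $\eta > 0$ and split $Y_u = Y_u' + Y_u''$ at the threshold $\eta s_n$, with corresponding processes $\Hsp_n\vphantom{H}'$ and $\Hsp_n\vphantom{H}''$. Let $E_n$ be the event that some ancestor-descendant pair of vertices of $T_n$ both carry a displacement of absolute value larger than $\eta s_n$: by the tail estimate $\Pr{|Y| > \eta s_n} = O(\eta^{-\beta}/n)$ and~\eqref{eq:convergence_total_path_length}, we have $\Pr{E_n} \le \Es{\Lambda(T_n)} \Pr{|Y| > \eta s_n}^2 = O(1/B_n) \to 0$. On $E_n^c$ the large-jump analysis proceeds exactly as in Sections~\ref{sec:thm_serpent_poilu_step3}--\ref{sec:thm_serpent_poilu_step4}: enumerating the vertices carrying $Y_u'' \ne 0$ as $u_{k_1} < \dots < u_{k_{N_n}}$, the number $N_n$ is binomial converging to a Poisson random variable of parameter $\Xi([0,1]\times(\R\setminus[-\eta,\eta]))$, the normalised indices $(k_i/n)$ are asymptotically i.i.d.\ uniform on $[0,1]$, and by regular variation, conditional on $N_n$, the rescaled values $(\lambda_n Y_{u_{k_i}})$ are asymptotically i.i.d.\ samples from the normalised restriction of the intensity of $\Xi$ to $\R\setminus[-\eta,\eta]$. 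Combining this with Lemma~\ref{lem:descendance_sommet_unif} to make each peak asymptotically vertical then yields
\[\left\{\lambda_n \Hsp_n\vphantom{H}''(nt) : t \in [0,1]\right\} \cvloi 0 \bowtie \Xi^\eta\]
in $\K$, where $\Xi^\eta$ denotes the restriction of $\Xi$ to $[0,1]\times(\R\setminus[-\eta,\eta])$; sending $\eta \downarrow 0$ then recovers $0 \bowtie \Xi$, since $d_H(0\bowtie\Xi,\, 0\bowtie\Xi^\eta) \le \eta$.

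The main obstacle is showing that $\sup_{t\in[0,1]} \lambda_n |\Hsp_n\vphantom{H}'(nt)| \cvproba 0$, which has no counterpart in the previous theorem (where the analogous process tends to a non-trivial snake). A direct tail integration gives $|\Es{Y'}| = O(\eta s_n/n)$ and $\Var(Y') \lesssim \eta^{2-\beta} s_n^2/n$ when $\beta<2$ (with a logarithmic correction at the critical $\beta=2$), while $\Var(Y') \le \Es{Y^2} < \infty$ when $\beta>2$; in each case, on a typical branch of length $O(n/B_n)$ the standard deviation of $\lambda_n \Hsp_n\vphantom{H}'$ is either $O(\eta^{(2-\beta)/2}/B_n^{1/2})$ or $O((n/B_n)^{1/2-1/p}/L(n/B_n))$, both of which vanish as $n \to \infty$ \emph{precisely} because $p \le 2$ and, in the borderline case $p=2$, because $L \to \infty$ by hypothesis (otherwise the statement would reduce to Theorem~\ref{thm:convergence_serpent_iid}). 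To upgrade this pointwise control into a uniform one, I apply a Rosenthal-type inequality to the centred variables $\mathsf{Y}' = Y' - \Es{Y'}$ together with the Hölder control on $H_n$ from Lemma~\ref{lem:Holder_hauteur_GW_stables}, exactly as in Step~5 of the proof of Theorem~\ref{thm:convergence_serpent_iid}: refining further the truncation by $b_n = (n^2/B_n)^{(\alpha-1)/(4\alpha)+\varepsilon} \ll \eta s_n$ and choosing the moment $q$ large enough for Kolmogorov's criterion to apply. Finally, the resampling trick of Section~\ref{sec:thm_serpent_poilu_step5} restores independence between the small and large jumps, and the continuity of $(f,\xi) \mapsto f \dotplus (0 \bowtie \xi)$ yields the claimed convergence in $\K$, jointly with~\eqref{eq:cv_GW_Duquesne}; the analogous statement for $\Csp_n$ follows via the uniform closeness of $C_n$ and $H_n$ after rescaling, which transfers to the spatial processes.
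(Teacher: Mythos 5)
Your overall architecture matches the paper's: truncate the displacements, show the large ones converge to the Poisson `hair' by reusing Steps~\ref{sec:thm_serpent_poilu_step2}--\ref{sec:thm_serpent_poilu_step4}, and kill the small ones by Rosenthal plus Lemma~\ref{lem:Holder_hauteur_GW_stables}, then restore independence by resampling. The key quantitative point, however, is mis-specified, and it creates a genuine gap for $p<2$. You take the cut-off $b_n=(n^2/B_n)^{(\alpha-1)/(4\alpha)+\varepsilon}$ wholesale from Theorem~\ref{thm:convergence_serpent_iid}, whereas the paper uses $b_n=(n^2/B_n)^{(\alpha-1)/(2p\alpha)+\varepsilon}$: the exponent must scale like $1/p$ to compensate for the $(n/B_n)^{1/p}$ normalisation, and for $p<2$ your exponent is strictly smaller. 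This matters at the ``at most one jump above $b_n$ per branch'' step, which is precisely what lets one dispose of the medium range $b_n < |Y| \le \eta s_n$ (values which, after rescaling by $\lambda_n = 1/s_n$, can be as large as~$\eta$ and cannot be absorbed in the Rosenthal estimate with $\gamma < (\alpha-1)/\alpha$). With the tail index $\beta = p\alpha/(\alpha-1)$, one needs $\tfrac{n^2}{B_n}\,\P(|Y|>b_n)^2 \to 0$, i.e.\ the exponent of $b_n$ to exceed $1/(2\beta) = (\alpha-1)/(2p\alpha)$; your exponent $(\alpha-1)/(4\alpha)$ gives $\tfrac{n^2}{B_n}\,\P(|Y|>b_n)^2 \asymp (n^2/B_n)^{1-p/2}$, which diverges for every $p<2$. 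So, for $p<2$, there are with high probability branches carrying several medium-size displacements, and nothing in your argument controls their cumulative contribution.

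Two further remarks. First, you never actually say what you do with the medium jumps: you split at $\eta s_n$, then refine at $b_n$, so there are three buckets, and only the smallest and largest are treated. On the event that each branch carries at most one $|Y|>b_n$ (which requires the correct $b_n$ and is exactly the paper's $E_n$), the medium ones behave like one extra peak of height at most $\eta$ per branch, hence contribute at most $\eta$ to the Hausdorff distance; this step deserves to be spelt out. Second, your primary split at $\eta s_n$ rather than at $b_n$ does not actually buy anything -- as you note yourself, showing that the $\eta s_n$-truncated process vanishes uniformly at scale $\lambda_n$ cannot be done by Rosenthal alone (the $q$-th moment term gives only $|t-s|^\gamma$ with $\gamma<1$ in the Kolmogorov bound), so you are forced back to the paper's two-level structure anyway. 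With the corrected $b_n = (n^2/B_n)^{(\alpha-1)/(2p\alpha)+\varepsilon}$ the plan goes through, and the moment bookkeeping for $\E[|Y'|^2]$ and the role of $L\to\infty$ when $p=2$ are as you say.
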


In the case $L^+(x) \to c_+ \in [0, \infty)$ and $L^-(x) \to c_- \in [0, \infty)$, the assumption reads
\[n \cdot \P(Y \ge (n/B_n)^{1/p}) \cv a_+
\qquad\text{and}\qquad
n \cdot \P(-Y \ge (n/B_n)^{1/p}) \cv a_-,\]
where $a_{+/-} = (c_{+/-})^{\frac{p\alpha}{\alpha-1}}$, and then the conclusion reads
\[\left\{\left(\frac{B_n}{n}\right)^{1/p} \Hsp_n(nt) ; t \in [0,1]\right\} \cvloi 0\bowtie \Xi,\]
where $\Xi$ has intensity $\frac{p\alpha}{\alpha-1} y^{-1-\frac{p\alpha}{\alpha-1}} (a_+ \ind{y > 0} + a_- \ind{y < 0}) \d x \d y$, which recovers~\cite[Theorem~6]{Janson-Marckert:Convergence_of_discrete_snakes}.

\begin{rem}
Recall if $\E[Y] = m \ne 0$, then $(B_n/n)\cdot \E[\Hsp_n(n\cdot)\mid T_n]$ converges to $m \cdot \Hexc$ so the previous result still holds in this case for $p < 1$, and when $p=1$ and $L^+$ and $L^-$ both converge, then 
\[\left\{\left(\frac{B_n}{n}\right)^{1/p} \Hsp_n(nt) ; t \in [0,1]\right\} \cvloi m \cdot \Hexc \bowtie \Xi,\]
in $\K$, jointly with~\eqref{eq:cv_GW_Duquesne}, where $\Hexc$ and $\Xi$ are independent, and the same holds (jointly) when $\Hsp_n(n\cdot)$ is replaced by $\Csp_n(2n\cdot)$.
\end{rem}

\begin{proof}
Since $L^{+/-}$ are slowly varying, we have $(n/B_n)^{-\theta} \ll L^{+/-}(n/B_n) \ll (n/B_n)^\theta$ for every $\theta > 0$ so the tails of $Y$ satisfy now $\P((Y)_{+/-} > y) = o(y^{-\frac{\alpha(p-\theta)}{\alpha(1+\delta)-1}})$ for every $\delta,\theta > 0$. As usual, let us cut the increments: put $b_n = (n^2/B_n)^{\frac{\alpha-1}{2p\alpha} + \varepsilon}$ for some $\varepsilon > 0$ small to be tuned. For every vertex $u \in T_n$, let $Y_u' = Y_u \ind{|Y_u| \le b_n}$ and $Y_u'' = Y_u \ind{|Y_u| > b_n}$, define then $\Hsp_n\vphantom{H}'$ and $\Hsp_n\vphantom{H}''$ the spatial processes in which the increments $Y_u$ are replaced by $Y_u'$ and $Y_u''$ respectively, so $\Hsp_n = \Hsp_n\vphantom{H}' + \Hsp_n\vphantom{H}''$. Similarly as in the proof of Theorem~\ref{thm:convergence_serpent_iid} and Theorem~\ref{thm:convergence_serpent_non_centre}, the exponent in $b_n$ matches that in the tails of $Y$. Therefore, again, it hods that
\[\frac{n^2}{B_n} \Pr{|Y| > b_n}^2 \ll \left(\frac{n^2}{B_n}\right)^{1-(\frac{\alpha-1}{2\alpha p} + \varepsilon) \frac{\alpha(p-\theta)}{\alpha(1+\delta)-1}},\]
and the exponent is negative for $\delta, \theta$ small enough. The event $E_n$ that $T_n$ contains two vertices, say $u$ and $v$, such that $u$ is an ancestor of $v$ and both $|Y_u| > b_n$ and $|Y_v| > b_n$, thus has a probability tending to $0$. Then the arguments used in Section~\ref{sec:thm_serpent_poilu_step2},~\ref{sec:thm_serpent_poilu_step3} and~\ref{sec:thm_serpent_poilu_step4} extend readily to prove that
\[\left\{\frac{B_n^{1/p}}{n^{1/p} L(n/B_n)} \Hsp_n\vphantom{H}''(nt) ; t \in [0,1]\right\} \cvloi 0 \bowtie \Xi,\]
and it only remains to prove the convergence
\[\left(\frac{B_n^{1/p}}{n^{1/p} L(n/B_n)} \Hsp_n\vphantom{H}'(nt)\right)_{t \in [0,1]} \cvproba 0.\]
We first consider the average displacement induced by these small jumps. Let $m_n = \E[Y'] = -\E[Y'']$, so
\[\frac{B_n^{1/p}}{n^{1/p} L(n/B_n)} \Esc{\Hsp_n\vphantom{H}'(n \cdot)}{T_n}
= \frac{B_n^{-(p-1)/p}}{n^{-(p-1)/p} L(n/B_n)} m_n \cdot \frac{B_n}{n} H_n(n \cdot).\]
From the tail behaviour of $Y$ we get:
\[|m_n| \le \E[|Y''|]
\le b_n \Pr{|Y| > b_n} + \int_{b_n}^\infty \Pr{|Y| > y} \d y
= O\left(b_n^{1-\frac{\alpha(p-\theta)}{\alpha(1+\delta)-1}}\right).\]
Exactly as in the proof of Theorem~\ref{thm:convergence_serpent_iid}, note that for $\varepsilon, \delta, \theta$ sufficiently small, we have
\[\left(\frac{\alpha-1}{2p\alpha} + \varepsilon\right) \left(1-\frac{\alpha(p-\theta)}{\alpha(1+\delta)-1}\right)
> - \frac{\alpha+1}{2\alpha},\]
and this bound gets tighter as $\varepsilon, \delta, \theta$ get closer to $0$. Then, since
\[\frac{\alpha+1}{2\alpha} < \frac{p-1}{p} < \frac{\alpha+1}{\alpha},\]
we conclude that for $\varepsilon, \delta, \theta$ sufficiently small,
\[\frac{n^{(p-1)/p}}{B_n^{(p-1)/p} L(n/B_n)} \left(b_n^{1-\frac{\alpha(p-\theta)}{\alpha(1+\delta)-1}}\right)
\le n^{2 (\frac{\alpha-1}{2p\alpha} + \varepsilon)(1-\frac{\alpha(p-\theta)}{\alpha(1+\delta)-1}) + \frac{p-1}{p-\theta}} \cdot B_n^{- (\frac{\alpha-1}{2p\alpha} + \varepsilon)(1-\frac{\alpha(p-\theta)}{\alpha(1+\delta)-1}) - \frac{p-1}{p-\theta}}\]
converges to $0$ as $n \to \infty$ since both exponents are negative. Therefore the process $\E[\Hsp_n\vphantom{H}'(n\cdot)\mid T_n]$ suitably rescaled converges in probability to $0$ and we focus for the rest of the proof on the centred process $\Hspc_n\vphantom{H}'(n\cdot) = \Hsp_n\vphantom{H}'(n\cdot) - \E[\Hsp_n\vphantom{H}'(n\cdot)\mid T_n]$.

Similarly, with the notation from Section~\ref{sec:serpents}, for $s,t \in [0,1]$, we have
\begin{align*}
&\Esc{\left(\frac{B_n^{1/p}}{n^{1/p} L(n/B_n)} |\Hspc_n\vphantom{H}'(nt) - \Hspc_n\vphantom{H}'(ns)|\right)^q}{A_n}
\\
&\le C_q \frac{B_n^{q/p}}{n^{q/p} L(n/B_n)^q} \left(\frac{n}{B_n} |t-s|^\gamma \Es{|\mathsf{Y}'|^q} + \left(\frac{n}{B_n} |t-s|^\gamma\right)^{q/2} \Es{|\mathsf{Y}'|^2}^{q/2}\right)
\\
&\le C_q \left(L\left(\frac{n}{B_n}\right)^{-q} \left(\frac{B_n}{n}\right)^{\frac{q}{p}-1} \left(\frac{n^2}{B_n}\right)^{q(\frac{\alpha-1}{p\alpha} + \varepsilon)} |t-s|^\gamma + \left[L\left(\frac{n}{B_n}\right)^{-2} \left(\frac{B_n}{n}\right)^{\frac{2}{p} - 1} \Es{|Y'|^2}\right]^{q/2} |t-s|^{q\gamma/2}\right).
\end{align*}
The first term in the last line is controlled as in the proof of Theorem~\ref{thm:convergence_serpent_iid} and Theorem~\ref{thm:convergence_serpent_non_centre}: the slowly varying function will not cause any trouble, and the factor $1/p$ in the exponent in $b_n$ compensate the fact that we now rescale by $(n/B_n)^{1/p}$ instead of $(n/B_n)^{1/2}$ for Theorem~\ref{thm:convergence_serpent_iid} and similar calculations as in the proof of the latter show that this first term is bounded by $|t-s|^\gamma$ times $n$ raised to a power which converges to $-\infty$ as $q \to \infty$. The only change compared to the proof of Theorem~\ref{thm:convergence_serpent_iid} is that, as for Theorem~\ref{thm:convergence_serpent_non_centre}, we may not have $\E[|Y|^2] < \infty$. Still,
\[\Es{|Y'|^2}
= 2 \int_0^{b_n} y \Pr{|Y| > y} \d y
= O\left(\int_1^{b_n} y^{1-\frac{\alpha(p-\theta)}{\alpha(1+\delta)-1}} \d y\right).\]
Either $\frac{\alpha(p-\theta)}{\alpha-1} > 2$, in which case $1-\frac{\alpha(p-\theta)}{\alpha(1+\delta)-1} < -1$ for $\delta$ small enough and the integral converges, or $\frac{\alpha(p-\theta)}{\alpha-1} \le 2$, in which case, $1 - \frac{\alpha(p-\theta)}{\alpha(1+\delta)-1} > -1$, so $\E[|Y'|^2] = O(b_n^{2-\frac{\alpha(p-\theta)}{\alpha(1+\delta)-1}})$ and then
\[\left(\frac{B_n}{n}\right)^{\frac{2}{p} - 1} \Es{|Y'|^2} 
= O\left(B_n^{\frac{2}{p} - 1 -(\frac{\alpha-1}{2p\alpha} + \varepsilon)(2-\frac{\alpha(p-\theta)}{\alpha(1+\delta)-1})} n^{1 - \frac{2}{p} + 2(\frac{\alpha-1}{2p\alpha} + \varepsilon)(2-\frac{\alpha(p-\theta)}{\alpha(1+\delta)-1})}\right).\]
Now for every $\eta > 0$, it holds that $B_n = o(n^{\frac{1+\eta}{\alpha}})$, so finally
\[\left(\frac{B_n}{n}\right)^{\frac{2}{p} - 1} \Es{|Y'|^2} 
= o\left(n^{\frac{1+\eta}{\alpha} (\frac{2}{p} - 1 -(\frac{\alpha-1}{2p\alpha} + \varepsilon)(2-\frac{\alpha(p-\theta)}{\alpha(1+\delta)-1})) + 1 - \frac{2}{p} + 2(\frac{\alpha-1}{2p\alpha} + \varepsilon)(2-\frac{\alpha(p-\theta)}{\alpha(1+\delta)-1})}\right).\]
Note that we got rid of the term $L(n/B_n)^{-2}$ but its contribution is negligible. For $\eta = \varepsilon = \delta = \theta = 0$, the preceding exponent reduces to 
\[\frac{1}{\alpha} \left(\frac{2}{p} - 1 - \frac{\alpha-1}{2p\alpha} \left(2-\frac{\alpha p}{\alpha-1}\right)\right) + 1 - \frac{2}{p} + \frac{\alpha-1}{p\alpha} \left(2-\frac{\alpha p}{\alpha-1}\right)
= \frac{2-\alpha(p+2)}{4\alpha^2 p},\]
which is negative for any $p > 0$ and $\alpha > 1$. Then the exponent is still negative for $\eta, \varepsilon, \delta, \theta > 0$ small enough. In almost all cases, $(B_n/n)^{\frac{2}{p} - 1} \E[|Y'|^2]$ is thus bounded by a negative power of $n$, the only case where it fails is for $p=2$, in which case $\frac{\alpha(p-\theta)}{\alpha-1} > 2$ for $\theta$ small enough, so $(B_n/n)^{\frac{2}{p} - 1} \E[|Y'|^2]$ is uniformly bounded, but this in not an issue. We conclude as in the proof of Theorem~\ref{thm:convergence_serpent_iid} and Theorem~\ref{thm:convergence_serpent_non_centre} that for $q$ large enough,
\[\sup_{n \to \infty} \Esc{\left(\frac{B_n^{1/p}}{n^{1/p} L(n/B_n)} |\Hspc_n\vphantom{H}'(nt) - \Hspc_n\vphantom{H}'(ns)|\right)^q}{A_n}
\le C_q |t-s|^2,\]
so the process $\frac{B_n}{n} \Hspc_n\vphantom{H}'(n\cdot)$ is tight, and the bounds applied with $s=0$ and $t \in [0,1]$ fixed show that the one-dimensional marginals converge in distribution to $0$ so the whole process converges in distribution to the null process, which completes the proof.
\end{proof}


{\small
\newcommand{\etalchar}[1]{$^{#1}$}

}

\end{document}